\newcommand\newtheoremlink[2]{\newtheorem{raw#1}{#2}%
\newenvironment{#1}[1]{%
\def\thmhead####1####2####3{{\href{##1}{\thmname{####1} \thmnumber{####2}}\thmnote{ (####3)}}}% Thm head spec
\csname raw#1\endcsname}{\csname endraw#1\endcsname}
}
\theoremstyle{plain}
\newtheorem{lem}{Lemma}
\newtheorem{prop}{Proposition}
\newtheorem{thm}{Theorem}
\newtheorem{thmd}{Theorem}
\newtheorem{thmdd}{Theorem}
\theoremstyle{remark}
\newtheorem*{rmk}{Remark}
\def\ie{{\it i.e.}, }
\def\eg{{\it e.g.}\/}
\def\cf{{\it cf.}\/}
\newcommand\bra[1]{\left<#1\right|}
\newcommand\ket[1]{\left|#1\right>}
\renewcommand\ss{\scriptstyle}
\newcommand\sss{\scriptscriptstyle}
\newdimen{\cellsize}
\newcommand\medboxes{\setlength{\cellsize}{14.22pt}\def\boxformat{}}%exactly 0.5cm
\newcommand\tinyboxes{\setlength{\cellsize}{6pt}\def\boxformat{\scriptscriptstyle}}
\tikzset{tableaubox/.style={draw=black,thin,sharp corners,solid,minimum size=\cellsize,inner sep=0pt}}
\tikzset{tableau/.style={matrix,name=tab,matrix anchor=tab-1-1.south west,inner sep=1pt,matrix of math nodes,cells={anchor=center,draw=black,thin,solid,arrows=-},nodes={tableaubox,execute at begin node=\boxformat},nodes in empty cells,row sep={\cellsize,between origins},column sep={\cellsize,between origins}}}
\newcommand\cellextra[1]{#1\expandafter\tikz@lib@matrix@start@cell}%trick: after doing the extra stuff we restart the node reading macro. hopefully expandafter is ok
\def\activate#1{\begingroup
  \lccode`\~=`#1%
  \lowercase{\endgroup \let~#1}%
  \catcode`#1=13\relax}
\newcommand\tableau[1]{\tikz[baseline=0]
\node[tableau]{#1};}
\newcommand\fund{{\tinyboxes\tableau{\\}}}
\long\def\remove#1{}
\newcommand\cev{\overset{\vbox to 0.7pt{\hbox{$\ss\shortleftarrow$}}}}
\renewcommand\vec{\overset{\vbox to 0.7pt{\hbox{$\ss\shortrightarrow$}}}}
\renewcommand\bar{\overset{\vbox to 2.5pt{\hbox{$\ss-$}}}}
\newcommand\x{{\bar x}}
\newcommand\y{{\vec y}}
\newcommand\z{{\vec z}}
\renewcommand\t{{\vec t}}
\newcommand\yy{{\cev y}}
\newcommand\zz{{\cev z}}
\renewcommand{\leq}{\leqslant}
\renewcommand{\geq}{\geqslant}
\renewcommand\ss{\scriptstyle}
\newcommand\ra{R^{\tikz[scale=0.3]{\dtile{{4,{0,0},0},{4,{0,0},1}}}}}
\newcommand\rb{R^{\tikz[scale=0.3]{\dtileb{{4,{0,1},0},{4,{0,0},1}}}}}
\newcommand\rc{R^{\tikz[scale=0.3]{\dtilec{{4,{1,0},0},{4,{0,0},1}}}}}
\tikzset{arrow/.style={postaction={decorate,thick,decoration={markings,mark = at position #1 with {\arrow{>}}}}},arrow/.default=0.5}
\tikzset{invarrow/.style={postaction={decorate,thick,decoration={markings,mark = at position #1 with {\arrow{<}}}}},invarrow/.default=0.5}
\tikzset{vertex/.style={circle,thin,draw=black,fill=#1,inner sep=1pt}}%style for vertices
\tikzset{edge/.style={very thick,draw=#1!50!black}}%style for red/green lines
\tikzset{bgline/.style={gray,very thin}}%style for boundary of tiles
\tikzset{bgplaq/.style={fill=#1!15!white}}%style for filling of tiles: pale color
\def\colors {{0.7/ 1.0/ 0.87/ 1}, {1.0/ 0.8/ 0.87/ 1}, {1.0/ 0.87/ 0.7/
     1}, {0.87/ 1.0/ 0.7/ 1}, {0.82/ 0.82/ 1.0/ 1}, {0.9/ 1.0/ 0.9/ 
    1}, {1.0/ 0.9/ 0.9/ 1}, {0.94/ 0.94/ 0.94/ 1}, {1.0/ 1.0/ 0.65/ 
    1}}
\definecolor{lightblue}{rgb}{0.5,0.5,1}%\colorlet{lightblue}{blue!50!white}
\colorlet{nw0}{green}\colorlet{nw1}{lightblue}\colorlet{nw3}{green}\colorlet{nw4}{lightblue}\colorlet{nw5}{lightblue}\colorlet{nw7}{green}
\colorlet{ne0}{lightblue}\colorlet{ne1}{red}\colorlet{ne2}{red}\colorlet{ne4}{lightblue}\colorlet{ne5}{green}\colorlet{ne7}{red}
\colorlet{s0}{green}\colorlet{s1}{red}\colorlet{s2}{green}\colorlet{s3}{red}
\def\rawpuzzle#1{
\foreach\p in {#1} {
\pgfmathtruncatemacro\c{{\p}[0]}
\pgfmathtruncatemacro\x{{\p}[1][1]}
\pgfmathtruncatemacro\y{{\p}[1][0]}
\pgfmathtruncatemacro\t{{\p}[2]}
\pgfmathtruncatemacro\xx{\x+\t}
\pgfmathtruncatemacro\yy{\y+\t}
\fill[bgplaq=c\c] (\xx,\yy) -- (\x+1,\y) -- (\x,\y+1);
\ifcase\c
\draw[edge=green] (\x+0.5,\y+0.5) -- (\xx,\y+0.5);
\or
\draw[edge=red] (\x+0.5,\y+0.5) -- (\x+0.5,\yy);
\or
\draw[edge=green] (\x+0.5,\y+0.5) -- (\xx,\y+0.5);
\draw[edge=red] (\xx,\y+0.5) -- (\x+0.5,\yy);
\or
\draw[edge=red] (\x+0.5,\y+0.5) -- (\x+0.5,\yy);
\draw[edge=green] (\x+0.5,\yy) -- (\xx,\y+0.5);
\or
\or
\draw[edge=green] (\x+0.5,\y+0.5) -- (\x+0.5,\yy);
\or
\draw[edge=red] (\x+0.5,\y+0.5) -- (\xx,\y+0.5);
\or
\draw[edge=red] (\x+0.5,\y+0.5) -- (\x+0.5,\yy);
\draw[edge=green] (\x+0.5,\y+0.5) -- (\xx,\y+0.5);
\or
\draw[edge=green] (\x+0.48,\yy) -- (\xx,\y+0.48);
\draw[edge=red] (\x+0.52,\yy) -- (\xx,\y+0.52);
\fi
}
}
\def\rawpuzzlevertices#1{
\foreach\p in {#1}{
\pgfmathtruncatemacro\c{{\p}[0]}
\pgfmathtruncatemacro\x{{\p}[1][1]}
\pgfmathtruncatemacro\y{{\p}[1][0]}
\pgfmathtruncatemacro\t{{\p}[2]}
\pgfmathtruncatemacro\l{\size-1-\x-\y}% for up-pointing triangles, \l<=\size
\ifnum\t=0%only up-pointing triangles are parallel to borders
\ifnum\x=0 \node[vertex=nw\c] at (0,\y+0.5) {};\fi
\ifnum\y=0 \node[vertex=ne\c] at (\x+0.5,0) {};\fi
\ifnum\l=0 \node[vertex=s\c] at (\x+0.5,\y+0.5) {};\fi
\fi
}
}
\def\rawdoublepuzzlevertices#1{
%vertices: ideally, would do it at same time as triangles, but for technical reasons...
\foreach\p in {#1}{
\pgfmathtruncatemacro\c{{\p}[0]}
\pgfmathtruncatemacro\x{{\p}[1][1]}
\pgfmathtruncatemacro\y{{\p}[1][0]}
\pgfmathtruncatemacro\t{{\p}[2]}
\pgfmathtruncatemacro\lx{\size-1-\x}
\pgfmathtruncatemacro\ly{\size-1-\y}
\ifnum\t=0
\ifnum\x=0 \node[vertex=nw\c] at (0,\y+0.5) {};\fi
\ifnum\y=0 \node[vertex=ne\c] at (\x+0.5,0) {};\fi
\else
\ifnum\lx=0\node[vertex=nw\c] at (\size,\y+0.5) {};\fi
\ifnum\ly=0\node[vertex=ne\c] at (\x+0.5,\size) {};\fi
\fi
}
}
\def\puzzlescale{0.7}
\newif\ifstarttikz
\tikzset{puz/.style={yscale=-1.732,scale=0.707,rotate=45,scale=\puzzlescale,line cap=round}}
\newcommand\tile[1]{% up tri
\tikzifinpicture{\starttikzfalse}{\starttikztrue\begin{tikzpicture}}
\begin{scope}[puz]
\begin{scope}
\clip (0,0) -- (1,0) -- (0,1) -- cycle;
\rawpuzzle{#1}
\end{scope}
\draw[bgline] (0,0) -- (1,0) -- (0,1) -- cycle;
\end{scope}
\ifstarttikz\end{tikzpicture}\fi%
}
\newcommand\tilei[1]{% down tri
\tikzifinpicture{\starttikzfalse}{\starttikztrue\begin{tikzpicture}}
\begin{scope}[puz]
\begin{scope}
\clip (1,1) -- (1,0) -- (0,1) -- cycle;
\rawpuzzle{#1}
\end{scope}
\draw[bgline] (1,1) -- (1,0) -- (0,1) -- cycle;
\end{scope}
\ifstarttikz\end{tikzpicture}\fi%
}
\newcommand\dtile[1]{% lozenge
\tikzifinpicture{\starttikzfalse}{\starttikztrue\begin{tikzpicture}}
\begin{scope}[puz]
\begin{scope}
\clip (0,0) -- (1,0) -- (1,1) -- (0,1) -- cycle;
\rawpuzzle{#1}
\end{scope}
\draw[bgline] (0,0) -- (1,0) -- (1,1) -- (0,1) -- cycle;
\end{scope}
\ifstarttikz\end{tikzpicture}\fi%
}
\newcommand\dtileb[1]{% lozenge 120
\tikzifinpicture{\starttikzfalse}{\starttikztrue\begin{tikzpicture}}
\begin{scope}[puz]
\begin{scope}
\clip (0,1) -- (1,1) -- (2,0) -- (1,0) -- cycle;
\rawpuzzle{#1}
\end{scope}
\draw[bgline] (0,1) -- (1,1) -- (2,0) -- (1,0) -- cycle;
\end{scope}
\ifstarttikz\end{tikzpicture}\fi%
}
\newcommand\dtilec[1]{% lozenge 240
\tikzifinpicture{\starttikzfalse}{\starttikztrue\begin{tikzpicture}}
\begin{scope}[puz]
\begin{scope}
\clip (0,1) -- (0,2) -- (1,1) -- (1,0) -- cycle;
\rawpuzzle{#1}
\end{scope}
\draw[bgline] (0,1) -- (0,2) -- (1,1) -- (1,0) -- cycle;
\end{scope}
\ifstarttikz\end{tikzpicture}\fi%
}
\newcommand\puzzle[2][]{%
\tikzifinpicture{\starttikzfalse}{\starttikztrue\begin{tikzpicture}}
\begin{scope}[puz]
\rawpuzzle{#2}
%the frame depends on type
\draw (0,0) -- (\size,0) -- (0,\size) -- cycle;
\begin{scope}[bgline]
\foreach\x in {1,...,\size} \draw (\x,0) -- (\x,\size-\x);
\foreach\x in {1,...,\size} \draw (0,\x) -- (\size-\x,\x);
\foreach\x in {1,...,\size} \draw (\size-\x,0) -- (0,\size-\x);
\end{scope}
\rawpuzzlevertices{#2}
%additional stuff like caption
#1
\end{scope}
\ifstarttikz\end{tikzpicture}\fi%
}
\newcommand\equivpuzzle[2][]{%same except we emphasize the cut into vertical lozenges
\tikzifinpicture{\starttikzfalse}{\starttikztrue\begin{tikzpicture}}
\begin{scope}[puz]
\rawpuzzle{#2}
%the frame depends on type
\draw (0,0) -- (\size,0) -- (0,\size) -- cycle;
\begin{scope}[bgline]
\foreach\x in {1,...,\size} \draw (\x,0) -- (\x,\size-\x);
\foreach\x in {1,...,\size} \draw (0,\x) -- (\size-\x,\x);
\end{scope}
%vertices: ideally, would do it at same time as triangles, but for technical reasons...
\rawpuzzlevertices{#2}
%additional stuff like caption
#1
\end{scope}
\ifstarttikz\end{tikzpicture}\fi%
}
\newcommand\doublepuzzle[2][]{%
\tikzifinpicture{\starttikzfalse}{\starttikztrue\begin{tikzpicture}}
\begin{scope}[puz]
\rawpuzzle{#2}
%the frame depends on type
\draw (0,0) -- (\size,0) -- (\size,\size) -- (0,\size) -- cycle;
\begin{scope}[bgline]
\foreach\x in {1,...,\size} \draw (\x,0) -- (\x,\size);
\foreach\x in {1,...,\size} \draw (0,\x) -- (\size,\x);
\foreach\x in {0,...,\size} \draw (\size-\x,0) -- (0,\size-\x);
\foreach\x in {1,...,\size} \draw (\x,\size) -- (\size,\x);
\end{scope}
\rawdoublepuzzlevertices{#2}
#1
\end{scope}
\ifstarttikz\end{tikzpicture}\fi%
}
\newcommand\equivdoublepuzzle[2][]{%
\tikzifinpicture{\starttikzfalse}{\starttikztrue\begin{tikzpicture}}
\begin{scope}[puz]
\rawpuzzle{#2}
%the frame depends on type
\draw (0,0) -- (\size,0) -- (\size,\size) -- (0,\size) -- cycle;
\begin{scope}[bgline]
\foreach\x in {1,...,\size} \draw (\x,0) -- (\x,\size);
\foreach\x in {1,...,\size} \draw (0,\x) -- (\size,\x);
\end{scope}
\rawdoublepuzzlevertices{#2}
#1
\end{scope}
\ifstarttikz\end{tikzpicture}\fi%
}
\def\anglesloz{{60,300,240,120}}
\newcommand\loz[3]{%
\begin{tikzpicture}[baseline=0]
\path (-0.5,0) coordinate (A);
\foreach[count=\i] \name/\ori in {#1}
{
\pgfmathsetmacro\a{\anglesloz[\i-1]}
\draw[\ori] (A) ++(\a+90:0.3) -- node[shift={(\a+90:0.25)}] {$\ss\name$} ++(\a:1);
\draw (A) -- ++(\a:1) coordinate (A);
}
\node[rotate=-60] at (-0.17,-0.2) {$\ss#2$};
\node[rotate=60] at (0.17,-0.2) {$\ss#3$};
\end{tikzpicture}%
}
\def\anglestriup{{60,300,180}}
\newcommand\triup[2]{%
\begin{tikzpicture}[baseline=0]
\path (-0.5,-0.3) coordinate (A);
\foreach[count=\i] \name/\ori in {#1}
{
\pgfmathsetmacro\a{\anglestriup[\i-1]}
\draw[\ori] (A) ++(\a+90:0.3) -- node[shift={(\a+90:0.25)}] {$\ss\name$} ++(\a:1);
\draw (A) -- ++(\a:1) coordinate (A);
}
\node {$\ss#2$};
\end{tikzpicture}%
}
\def\anglestridown{{0,240,120}}
\newcommand\tridown[2]{%
\begin{tikzpicture}[baseline=-0.3cm]
\path (-0.5,0.3) coordinate (A);
\foreach[count=\i] \name/\ori in {#1}
{
\pgfmathsetmacro\a{\anglestridown[\i-1]}
\draw[\ori] (A) ++(\a+90:0.3) -- node[shift={(\a+90:0.25)}] {$\ss\name$} ++(\a:1);
\draw (A) -- ++(\a:1) coordinate (A);
}
\node {$\ss#2$};
\end{tikzpicture}%
}
\long\def\sectionremove#1\section{\section}
\title[Littlewood--Richardson coefficients for Grothendieck polynomials]
{Littlewood--Richardson coefficients for Grothendieck polynomials from integrability}
\author{M.~Wheeler}
\address{Michael Wheeler, School of Mathematics and Statistics, University of Melbourne, Parkville, Victoria 3010, Australia}
\email{wheelerm@unimelb.edu.au}
\author{P.~Zinn-Justin}
\address{Paul Zinn-Justin, Laboratoire de Physique Th\'eorique et Hautes \'Energies, CNRS UMR 7589 and Universit\'e Pierre et Marie Curie (Paris 6), 4 place Jussieu, 75252 Paris cedex 05, France}
\email{pzinn@lpthe.jussieu.fr}
\begin{document}

%abstract
\begin{abstract}
We study the Littlewood--Richardson coefficients of double Grothendieck polynomials indexed by Grassmannian permutations. Geometrically, these are the structure constants of the equivariant $K$-theory ring of Grassmannians. Representing the double Grothendieck polynomials as partition functions of an integrable vertex model, we use its Yang--Baxter equation to derive a series of product rules for the former polynomials and their duals. The Littlewood--Richardson coefficients that arise can all be expressed in terms of puzzles without gashes, which generalize previous puzzles obtained by Knutson--Tao and Vakil.
\end{abstract}

\maketitle

%intro
\section{Introduction}
\subsection{Background}
Grothendieck polynomials were introduced by Lascoux and Sch\"utzen\hyp{}berger \cite{LS}, as polynomial representatives of Schubert classes in the Grothendieck ring of the flag manifold. They are inhomogeneous, multivariable polynomials which generalize Schubert polynomials (the latter are recovered by extracting all lowest degree monomials) and are indexed by permutations. In this work we shall focus solely on the case of Grassmannian permutations, \ie\ permutations $\sigma$ with a unique descent $\sigma(k) > \sigma(k+1)$, when the Grothendieck polynomials become symmetric in their variables and are more naturally indexed by partitions.  We will continue to refer to them simply as {\it Grothendieck polynomials,} with the implicit understanding that this always refers to Grassmannian permutations, or equivalently to
the $K$-theory of the Grassmannian. 

In this paper we consider the {\it structure constants} of the ring of symmetric functions with respect to the Grothendieck polynomial basis. They are the unique expansion coefficients obtained by taking a product of two Grothendieck polynomials:
\begin{equation}
\label{structure}
G^{\lambda} G^{\mu}
=
\sum_{\nu}
c^{\lambda,\mu}_{\nu}
G^{\nu},
\qquad
c^{\lambda,\mu}_{\nu} \in \mathbb{Z}.
\end{equation}
Since Grassmannian Grothendieck polynomials are generalizations of Schur polynomials, the coefficients $c^{\lambda,\mu}_{\nu}$ can be regarded as $K$-theoretic analogues of the famous Littlewood--Richardson coefficients \cite{lit-ric,rob,tho,sch}. Indeed, in the special case $|\lambda| + |\mu| = |\nu|$, the $c^{\lambda,\mu}_{\nu}$ reproduce exactly the Littlewood--Richardson coefficients. For this reason, we shall refer to $c^{\lambda,\mu}_{\nu}$ as Littlewood--Richardson coefficients for the Grothendieck polynomials.

The first explicit rule for calculating $c^{\lambda,\mu}_{\nu}$ was obtained by Buch \cite{Buch}, essentially by extending the notion of Littlewood--Richardson tableaux \cite{lit-ric} to the case of {\it set-valued} tableaux. An important corollary of this combinatorial expression for $c^{\lambda,\mu}_{\nu}$ is the fact that the sum \eqref{structure} is finite, which is not otherwise obvious (in view of the inhomogeneity of $G^{\lambda}$).

Not long after this, Vakil \cite{Vakil} obtained an elegant combinatorial expression for $c^{\lambda,\mu}_{\nu}$ by introducing a $K$-theoretic version of Knutson--Tao puzzles \cite{KT,KTW}. The generalization of the Knutson--Tao puzzles was achieved by adding an extra (non-rotatable) {\it puzzle piece} to the set of existing pieces, which gives rise to a minus sign every time it appears in a puzzle. Subsequently, an equivariant version of these $K$-puzzles was conjectured by Knutson and Vakil \cite{CV}, and proved recently (modulo a correction to the original formula) by Pechenik and Yong \cite{PY}. 

In the present paper we shall also consider the equivariant $K$-theory of the Grassmannian, for which the corresponding polynomial representatives are the {\it double} Grothendieck polynomials \cite{LS}. One of our principal aims is to recover the $K$-puzzles of \cite{Vakil} using techniques from integrability, and then to generalize them to the equivariant case. It is important to point out that our puzzle expression for the equivariant Littlewood--Richardson coefficients (Theorem \ref{thm:equiv}) is {\it not} obviously the same as the one obtained in \cite{PY}, while giving the same answer; indeed, our formulation seems to be simpler, since our tiling rules are purely local, and we require fewer puzzles to compute coefficients than in \cite{PY}. In addition to this, we present new rules for the multiplication of {\it dual} double Grothendieck polynomials, and $K$-theoretic versions of the Molev--Sagan product rule \cite{mol-sag}.

It is worth noting that our two main theorems provide rules that are positive in the sense of \cite{AGM-Kpos}:
our Theorem~\ref{thm:equiv} displays positivity as in \cite[Cor.~5.3]{AGM-Kpos} (except we use Schubert classes, not opposite ones, so the secondary alphabet is reversed), and our Theorem~\ref{thm:equivdual} displays positivity as in \cite[Cor.~5.2]{AGM-Kpos}.

\subsection{Methodology}
In \cite{z-j}, a new approach for calculating (Schur) Littlewood--Richardson coefficients was developed. One of the key ideas in \cite{z-j} was to realise the product rule for the Schur polynomials in the setting of integrable vertex models. Namely, both sides of the product identity $s^{\lambda} s^{\mu} = \sum_{\nu} c^{\lambda,\mu}_{\nu} s^{\nu}$ were expressed as partition functions in the square-triangle-rhombus model \cite{d-g-nie}, with their equality arising as a consequence of the Yang--Baxter equation of the model. This led to a novel formulation of the puzzles that had previously been obtained in \cite{KT,KTW}, and also permitted a solution of the more complicated Molev--Sagan problem \cite{mol-sag}. In this work, we shall extend the methods of \cite{z-j} to the $K$-theoretic setting. There are four steps in this program:

{\bf 1.} It is first necessary to express the Grothendieck polynomials (and their duals) as partition functions in an appropriate vertex model. In the case of ordinary Grothendieck polynomials, such a construction was recently given by Motegi and Sakai \cite{ms1,ms2}, using an integrable five-vertex model. Here we extend the setup of \cite{ms1,ms2} to the equivariant case (double Grothendieck polynomials), which can be done without any further effort, since the equivariant parameters are nothing but the vertical spectral parameters of the lattice model.

{\bf 2.} At the second stage, we embed the five-vertex model used at step {\bf 1} in a rank-two solution of the Yang--Baxter equation. This is a technical requirement which allows our subsequent calculations to succeed (they would not, if we performed all calculations within the simpler, rank-one model), although it is difficult to motivate {\it a priori.} As a loose heuristic, one can view each Grothendieck polynomial appearing in \eqref{structure} as corresponding to one of the three possible embeddings of $\mathfrak{sl}(2)$ in $\mathfrak{sl}(3)$. While each polynomial can be constructed in its own right using only an $\mathfrak{sl}(2)$ vertex model, we must pass to $\mathfrak{sl}(3)$ in order to obtain $c^{\lambda,\mu}_{\nu}$ as a partition function. Recently we adopted similar techniques in calculating the structure constants of the Hall--Littlewood polynomials via a rank-two bosonic model \cite{wz-j2}.

{\bf 3.} Next, we write down a specific partition function in the model of point {\bf 2} and evaluate it in two equivalent ways, using the Yang--Baxter equation. This leads to the following equation (shown here schematically):
\begin{align}
\label{schem-master}
\sum_{\nu}
\begin{tikzpicture}[scale=0.4,baseline=0]
\draw (-5*0.25-4*0.5,-5*1.299) coordinate (A) -- ++(120:5) coordinate (B) -- ++(60:2*5) coordinate (C) -- ++(0:4) coordinate (D) -- ++(-60:5) coordinate (E) -- ++(-120:2*5) coordinate (F) -- cycle;
\draw (B) -- ++(0:4) coordinate (G) -- (D); \draw (F) -- (G);
\filldraw[fill=white!75!black] (A) -- (B) -- (G) -- (F) -- cycle;
\filldraw[fill=white!90!black] (G) ++(60:5) -- ++(-60:5) -- (F) -- (G) -- cycle;
\draw[ultra thick,arrow=0.5]  (A) -- node[below left] {$\ss\lambda$} (B);
\draw[ultra thick,arrow=0.5] (B) -- node[above left] {$\ss\mu$} ++(60:5) coordinate (H);
\draw[ultra thick,arrow=0.5] (H) ++(0:4) -- node[above left] {$\ss\nu$} (D);
\draw[ultra thick,arrow=0.5] (F) ++(60:5) -- node[below right] {$\ss\tilde\mu$} (E);
\draw[ultra thick,arrow=0.5] (E) -- node[above right] {$\ss\tilde\lambda$} (D);
%\draw[->] (A) ++(210:1) ++(120:0.5) -- node[below left] {$\ss\lambda$} ++(120:5-2*0.5);
%\draw[->] (B) ++(150:1) ++(60:0.5) -- node[above left] {$\ss\mu$} ++ (60:5-2*0.5);
%\draw[<-] (D) ++(30:1) ++(-60:0.5) -- node[above right] {$\ss\tilde\lambda$} ++(-60:5-2*0.5); 
%\draw[<-] (E) ++(-30:1) ++(-120:0.5) -- node[below right] {$\ss\tilde\mu$} ++(-120:5-2*0.5);
\draw[dotted] (B) ++(60:5) -- ++(0:4) -- ++(-60:5);
%\path (A) ++(75:2) node {\bl A}; 
%\path (F) ++(90:4) node {\bl B}; 
\path (D) ++(-90:4) node {$c^{\nu,\tilde\lambda}_{\lambda,\tilde\mu}$}; 
\path (C) ++(-75:2) node {$G_\nu$}; 
\path (C) ++(240:5)  ++(-75:2) node {$G^\mu$}; 
\end{tikzpicture}
=
\sum_{\tilde\nu}
\begin{tikzpicture}[scale=-0.4,baseline=0]
\draw (-5*0.25-4*0.5,-5*1.299) coordinate (A) -- ++(120:5) coordinate (B) -- ++(60:2*5) coordinate (C) -- ++(0:4) coordinate (D) -- ++(-60:5) coordinate (E) -- ++(-120:2*5) coordinate (F) -- cycle;
\draw (B) -- ++(0:4) coordinate (G) -- (D); \draw (F) -- (G);
\filldraw[fill=white!75!black] (A) -- (B) -- (G) -- (F) -- cycle;
\filldraw[fill=white!90!black] (G) ++(60:5) -- ++(-60:5) -- (F) -- (G) -- cycle;
\draw[ultra thick,arrow=0.5]  (B) -- node[above right] {$\ss\tilde\lambda$} (A);
\draw[ultra thick,arrow=0.5] (C) ++(-120:5) -- node[below right] {$\ss\tilde\mu$} (B);
\draw[ultra thick,arrow=0.5] (D) -- node[below right] {$\ss\tilde\nu$} ++(-120:5);
\draw[ultra thick,arrow=0.5] (E) -- node[above left] {$\ss\mu$} ++(-120:5);
\draw[ultra thick,arrow=0.5] (D) -- node[below left] {$\ss\lambda$} (E);
%\draw[<-] (A) ++(210:1) ++(120:1) -- node[above right] {$\ss\tilde\lambda$} ++(120:5-2);
%\draw[<-] (B) ++(150:1) ++(60:1) -- node[below right] {$\ss\tilde\mu$} ++ (60:5-2);
%\draw[->] (D) ++(30:1) ++(-60:0.5) -- node[below left] {$\ss\lambda$} ++(-60:5-2*0.5); 
%\draw[->] (E) ++(-30:1) ++(-120:0.5) -- node[above left] {$\ss\mu$} ++(-120:5-2*0.5);
\draw[dotted] (B) ++(60:5) -- ++(0:4) -- ++(-60:5);
%\path (A) ++(75:2) node {\bl H}; 
%\path (F) ++(90:4) node {\bl I}; 
\path (D) ++(-90:4) node {$c^{\mu,\tilde\lambda}_{\lambda,\tilde\nu}$}; 
\path (C) ++(-75:2) node {$G^{\tilde\nu}$}; 
\path (C) ++(240:5)  ++(-75:2) node {$G_{\tilde\mu}$}; 
\end{tikzpicture}
\end{align}
The boundary conditions of this partition function are chosen in such a way that they encode four partitions $\lambda,\mu,\tilde\lambda,\tilde\mu$ (as shown), while a sum is taken on all internal lattice sites. It is easy to show that certain regions of the partition function (those shown in grey) are in fact ``frozen'', meaning that there are only three non-trivial regions on either side of the identity. One of these regions can be recognized to produce a double Grothendieck polynomial, another a dual polynomial, while the third region is a ``lozenge'' bordered by four partitions. The resulting identity, when written algebraically, is the {\it master equation} \eqref{eq:master} (note that we have written the partitions $\tilde\lambda$ and $\tilde\mu$ in the opposite orientation to their appearance in \eqref{eq:master}, to avoid a slight notational difficulty which we will only address later).

{\bf 4.} Finally, it is possible to specialize \eqref{eq:master} in a number of ways, leading to various product rules. The freedom to specialize comes from the four partitions $\lambda,\mu,\tilde\lambda,\tilde\mu$, of which any subset can be chosen to be $\varnothing$ (the empty partition), as well as the sets of spectral parameters running through the lattice (not shown in the above figure), which can be identified. By exhausting all possible specializations, we arrive at a comprehensive list of product rules which replicates the known puzzles of \cite{Vakil}, while producing many new ones.

\subsection{Layout of paper}

In Section \ref{sec:results} we recall the recursive definition of double Grothendieck polynomials via the action of Demazure divided difference operators. We define equivariant puzzles as certain tilings of the triangular lattice by equilateral triangles, before listing each of the theorems proved in this work. In Section \ref{sec:model} we review the construction of (double) Grothendieck polynomials using an integrable five-vertex model (a trivial degeneration of the stochastic six-vertex model), before emedding it in a more general eleven-vertex model. We give the graphical representation of the Yang--Baxter equation for the latter model, which is ultimately used in Section \ref{sec:proofs} to derive the master identity \eqref{eq:master}. Having derived \eqref{eq:master}, we then take appropriate specializations of it to prove each of the theorems in Section \ref{sec:results}.

\subsection{Young diagrams and other notation}

Fix two integers $0\leq k\leq n$. We will consider Young diagrams contained in the $k \times (n-k)$ rectangle, and encode them by their {\it frame.} The frame of a Young diagram $\lambda$ is the subset of $\{1,\dots,n\}$ obtained by segmenting the border of $\lambda$ into up/right steps, and recording an $i$ if the $i$-th step is up, for all $1 \leq i \leq n$. For example, for $k=5$, $n=10$ and $\lambda$ given by
\begin{center}
\begin{tikzpicture}[scale=0.6]
%width and height
\draw[<->] (-1,0) -- node[left] {$h(\lambda)$} (-1,4);
\draw[<->] (0,5) -- node[above] {$w(\lambda)$} (5,5);
%box
\draw[dotted] (0,-1) -- (5,-1) -- (5,4) -- (0,4) -- (0,-1);
%horizontal lines
\draw (0,4) -- (5,4);
\draw (0,3) -- (5,3);
\draw (0,2) -- (3,2);
\draw (0,1) -- (3,1);
\draw (0,0) -- (1,0);
%vertical lines
\draw (0,-1) -- (0,4);
\draw (1,0) -- (1,4);
\draw (2,1) -- (2,4);
\draw (3,1) -- (3,4);
\draw (4,3) -- (4,4);
\draw (5,3) -- (5,4);
%border
\draw[thick,red,->] (0,-1) -- (0,0);
\draw[thick,red,->] (0,0) -- (1,0);
\draw[thick,red,->] (1,0) -- (1,1);
\draw[thick,red,->] (1,1) -- (2,1);
\draw[thick,red,->] (2,1) -- (3,1);
\draw[thick,red,->] (3,1) -- (3,2);
\draw[thick,red,->] (3,2) -- (3,3);
\draw[thick,red,->] (3,3) -- (4,3);
\draw[thick,red,->] (4,3) -- (5,3);
\draw[thick,red,->] (5,3) -- (5,4);
\end{tikzpicture}
\end{center}
we obtain the frame $\{1,3,6,7,10\}$ by tracing the boundary as indicated, and recording up steps. The frame associated to $\lambda$ should not be confused with the {\it partition} associated to $\lambda$ (lengths of rows of $\lambda$), a concept which will not be used in the paper. We shall write $i \in \lambda$ to indicate that $i$ appears in the frame of $\lambda$. We define the {\it width} $w(\lambda)$ ({\it height} $h(\lambda)$) of a Young diagram $\lambda$ as the number of boxes in its first row (column), respectively. In the example above, $w(\lambda) = 5$ and $h(\lambda) = 4$.

Let $\lambda$ and $\mu$ be two Young diagrams contained in the $k \times (n-k)$ rectangle, with frames $\{\ell_1,\dots,\ell_k\}$ and $\{m_1,\dots,m_k\}$, respectively. We write $\lambda \vartriangleright \mu$ if $0 \leq \ell_i-m_i \leq 1$ for all $1 \leq i \leq k$, and $(\ell_i-m_i = 1 \implies \ell_{i+1} = m_{i+1})$ for all $1\leq i \leq k-1$. In more traditional terminology, this is equivalent to saying that the skew diagram $\lambda-\mu$ forms a horizontal {\it and} vertical strip. Returning to the previous example,  
\begin{center}
\begin{tikzpicture}[scale=0.6]
%box
\draw[dotted] (0,-1) -- (5,-1) -- (5,4) -- (0,4) -- (0,-1);
%horizontal lines
\draw (0,4) -- (5,4);
\draw (0,3) -- (5,3);
\draw (0,2) -- (3,2);
\draw (0,1) -- (3,1);
\draw (0,0) -- (1,0);
%vertical lines
\draw (0,-1) -- (0,4);
\draw (1,0) -- (1,4);
\draw (2,1) -- (2,4);
\draw (3,1) -- (3,4);
\draw (4,3) -- (4,4);
\draw (5,3) -- (5,4);
%stars
\node[text centered] at (0.5,0.5) {$\star$};
\node[text centered] at (2.5,1.5) {$\star$};
\node[text centered] at (4.5,3.5) {$\star$};
\end{tikzpicture}
\end{center}
deleting any or all of the marked boxes in $\lambda$ produces a Young diagram $\mu$ such that $\lambda \vartriangleright \mu$.

We abbreviate sets of variables by a letter with a bar placed over it: $\x = (x_1,\dots,x_k)$. Ordered sets are specified by placing an arrow over a letter: $\y = (y_1,\dots,y_n)$ and $\yy = (y_n,\dots,y_1)$. In all such cases, the cardinality of the set will be clear from context.

\section{Main results}
\label{sec:results}

This section contains an overview of our results, with all proofs deferred to Section \ref{sec:proofs}. Our numbering system for the theorems is as follows: Theorem $m$ is a product rule for Grothendieck polynomials $G^{\lambda}$, Theorem $m'$ is the accompanying rule for dual Grothendieck polynomials $G_{\lambda}$, and Theorem $m''$ is a Molev--Sagan type product rule for $G_{\lambda}$. The case $m=1$ applies to ordinary Grothendieck polynomials (without a secondary alphabet), $m=2$ applies to double Grothendieck polynomials, and $m=3$ applies to double Grothendieck polynomials with a reversed secondary alphabet. We have illustrated most theorems by explicit examples; these were generated by the website \cite{puzzle} and can be viewed {\it in situ} by clicking on the hyperlinks provided. 

\subsection{Grothendieck polynomials}
\label{ssec:groth}

To each Young diagram $\lambda$ in the $k\times (n-k)$ rectangle we associate
the {\em double Grothendieck polynomial}\/ $G^\lambda(\x;\y)$. It
can for example be defined inductively \cite{LS} by letting
\begin{align}
\label{ind-0}
G^{k\times(n-k)}(\x;\y)=G^{\{n-k+1,\ldots,n\}}(\x;\y)=\prod_{i=1}^k \prod_{j=1}^{n-k} \left(1-\frac{x_i}{y_j}\right),
\end{align}
which defines $G^\lambda(\x;\y)$ for the largest possible Young diagram contained in the rectangle, and
\begin{align}
\label{ind-i}
G^\mu(\x;\y)=D_i G^\lambda(\x;\y)
\end{align}
if the frame of $\mu$ can be obtained from that of $\lambda$ by replacing $i+1$ with $i$.
Here $D_i$ is the Demazure divided difference operator acting on the variables $\y$:
\begin{align*}
D_i f(\y)=\frac{y_i f(\ldots,y_i,y_{i+1},\ldots)-y_{i+1} f(\ldots,y_{i+1},y_i,\ldots)}{y_i-y_{i+1}}.
\end{align*}
$G^\lambda(\x;\y)$ is easily shown to be a symmetric polynomial in the variables $\x$, and a polynomial in the variables $\y^{-1}=(y_1^{-1},\dots,y_n^{-1})$. When there is no risk of confusion, we sometimes write $G^\lambda \equiv G^\lambda(\x;\y)$.

These polynomials have the following geometric interpretation.
Consider the Grassmannian
$Gr(k,n)$. It has a natural $GL(n)$ action, and in particular the Cartan
torus $T$ acts on it, hence we can define its equivariant $K$-theory ring $K_T(Gr(k,n))$.
The latter is a quotient of $\mathbb Z[\x{}^{\pm1},\y{}^{\pm1}]^{\mathcal S_k}$ 
(where $\mathcal S_k$ acts by permutation of the $\x$;
the elementary symmetric polynomials of the $\x$ are classes of exterior powers of
the tautological vector bundle, whereas the $\y$ are equivariant parameters)
by the ideal of polynomials that vanish when the $\x$ are specialized
to a subset of the $\y$. Then $G^\lambda$ is a representative
of the $K_T$-class 
of the (structure sheaf of) the {\em Schubert variety}\/ $X^\lambda$
associated to the Young diagram $\lambda$. Here the convention is that
the number of boxes of $\lambda$, denoted $|\lambda|$, is the codimension
of $X^\lambda$.

% Next, we define 
% \begin{align*}
% \tilde G^\lambda(\x;\y)=\prod_{i=1}^k x_i \prod_{i\in\lambda} y_i^{-1}
% \ G^\lambda(\x;\y)
% \end{align*}
% $\tilde G^\lambda$ is a representative of the class of the ideal sheaf
% of $\bigcup_{\mu\supsetneq\lambda}X^\mu$ inside $X^\lambda$. \rem{we may want to remove any ref to $\tilde G$ in what follows}

We can also define {\em dual double Grothendieck polynomials}\/ $G_\lambda(\x;\y)$ to be
\begin{align*}
G_\lambda(\x;\y)=\prod_{i=1}^k x_i \prod_{i\in\lambda} y_i^{-1}
\,
G^{\lambda^\ast}(\x;\yy)
\end{align*}
where $\lambda^\ast$ is obtained by rotating by 180 degrees and complementing the Young
diagram $\lambda$, or equivalently whose frame is $n+1$ minus the frame of $\lambda$, and we recall that
$\yy=(y_n,\ldots,y_1)$. Geometrically,
$G_\lambda$ is a representative of the class of the ideal sheaf
of $\bigcup_{\mu\subsetneq\lambda}X_\mu$ inside $X_\lambda$, where the $X_\lambda$ are
opposite Schubert varieties (obtained from $X^{\lambda^\ast}$ by acting with the longest
element of the Weyl group;
note that $|\lambda|$ is
the dimension of $X_\lambda$, rather than its codimension).

It is known that the $G^\lambda$ and $G_\mu$ form dual bases of $K_T(Gr(k,n))$ (see \eg~\cite{AGM-Kpos}):
\begin{align*}
\left< G^\lambda G_\mu \right> = \delta^\lambda_\mu
\end{align*}
where $\left<\cdot\right>$ denotes pushforward to $K_T(\cdot)\cong \mathbb Z[\y{}^{\pm1}]$.

\subsection{Puzzles}
Consider the following triangular tiles:
\begin{center}
\begin{tikzpicture}
\matrix[column sep=1cm,cells={scale=1.2}]
{
\tile{{0,{0,0},0}}
&
\tile{{1,{0,0},0}}
&
\tile{{2,{0,0},0}}
&
\tile{{3,{0,0},0}}
&
\tile{{4,{0,0},0}}
&
\tile{{7,{0,0},0}}
&
\tile{{8,{0,0},0}}
\\
\tilei{{0,{0,0},1}}
&
\tilei{{1,{0,0},1}}
&
\tilei{{2,{0,0},1}}
&
\tilei{{3,{0,0},1}}
&
\tilei{{4,{0,0},1}}
&
\tilei{{7,{0,0},1}}
&
%\\[5mm]
%\node{$\alpha_-$};&\node{$\alpha_+$};&\node{$\beta_-$};&\node{$\beta_+$};&\node{$\beta_0$};&\node{$\gamma_0$};&\node{$\kappa$};
\\
};
\end{tikzpicture}
\end{center}
%Tile $\kappa$ is simply called the $K$-tile, while tiles $\gamma_0$ are called equivariant tiles.
The tile in the last column is called the $K$-tile, while the two tiles in the previous column are called equivariant tiles.
%\rem{at this stage, no need to introduce other orientations of equivariant tiles}

An {\em equivariant puzzle}\/
is a filling of a domain of the triangular lattice with the tiles above,
with the rule that red and green lines must be continuous across edges of neighboring triangles (\ie red and green lines can only start/end at the boundary of the domain).
An (ordinary) puzzle is an equivariant puzzle in which the equivariant tiles are excluded.\footnote{Note that in all our puzzles, the $K$-tile is allowed,
in contradistinction with the puzzles of \cite{KT,KTW}, which describe
the cohomology of $Gr(k,n)$. 
%See also the \hyperlink{foot:rot}{following footnote}.
}

The {\em state}\/ of an edge of the domain for a particular puzzle is the list of colors of lines going through that edge; it can be empty, red, green or both.

We shall consider three types of domains:
\begin{enumerate}
\item Equilateral triangles pointing up.
\item Equilateral triangles pointing down.
\item Lozenges with short horizontal diagonal.
\end{enumerate}

In each case, we impose the following {\em boundary conditions}.
Horizontal boundary edges can have red lines passing through, or green lines, but not neither or both.
Similarly, 60/240 degree boundary edges can exist in two states, green or empty;
and 120/300 degree boundary edges can be either red or empty. Here is an example of a puzzle for each domain:
\begin{center}
\def\size{4}
\puzzle[\node at (2.5,2.5) {(1)};]{{4, {0, 0}, 0}, {4, {0, 0}, 1}, {1, {0, 1}, 0}, {3, {0, 1}, 
  1}, {0, {0, 2}, 0}, {2, {0, 2}, 1}, {2, {0, 3}, 0}, {0, {1, 0}, 
  0}, {2, {1, 0}, 1}, {8, {1, 1}, 0}, {4, {1, 1}, 1}, {1, {1, 2}, 
  0}, {1, {2, 0}, 0}, {3, {2, 0}, 1}, {0, {2, 1}, 0}, {3, {3, 0}, 0}}
\qquad
\begin{tikzpicture}[scale=-1]
\puzzle[\node at (-0.5,-0.5) {(2)};]{{0, {0, 0}, 0}, {0, {0, 0}, 1}, {0, {0, 1}, 0}, {2, {0, 1}, 
  1}, {2, {0, 2}, 0}, {2, {0, 2}, 1}, {2, {0, 3}, 0}, {4, {1, 0}, 
  0}, {4, {1, 0}, 1}, {1, {1, 1}, 0}, {1, {1, 1}, 1}, {1, {1, 2}, 
  0}, {4, {2, 0}, 0}, {8, {2, 0}, 1}, {2, {2, 1}, 0}, {3, {3, 0}, 0}}
\end{tikzpicture}
\qquad
\def\size{3}
\doublepuzzle[\node at (3.5,3.5) {(3)};]{{0, {0, 0}, 0}, {0, {0, 0}, 1}, {0, {0, 1}, 0}, {2, {0, 1}, 
  1}, {2, {0, 2}, 0}, {0, {0, 2}, 1}, {4, {1, 0}, 0}, {4, {1, 0}, 
  1}, {1, {1, 1}, 0}, {3, {1, 1}, 1}, {0, {1, 2}, 0}, {0, {1, 2}, 
  1}, {0, {2, 0}, 0}, {0, {2, 0}, 1}, {3, {2, 1}, 0}, {1, {2, 1}, 
  1}, {4, {2, 2}, 0}, {4, {2, 2}, 1}}
\end{center}
in which we have marked the states of boundary edges using small circles (empty being represented by
a 
blue color).%\rem{colors should be removed or explained in terms of tilings}

\tinyboxes
Call $n$ the {\em size}\/ of the puzzle, \ie the length in lattice units of any side of its boundary.
A side of the boundary, once we fix an orientation of it (depicted by an arrow), 
forms a binary string of length $n$ in the two allowed edge states; 
equivalently, it encodes a subset of $\{1,\ldots,n\}$ where we record the green edges on the two orientations
of sides where they are allowed, and blue edges on the third orientation:
\begin{center}
\def\size{4}
\begin{tikzpicture}[scale=-1]
\def\u{\draw[->] (3.5,-0.75) -- (1.5,-0.75);
\node at (3.25,-2) {$\equiv\ss \{3,4\}$}; \node[tableau] at (4.75,-3.5) {&\\&\\};
\draw[->] (3,1.5) -- (1.5,3);
\node at (2.5,2.5) {$\ss \{1,3\}\equiv$}; \node[tableau] at (1.6,3.25) {\\};
\draw[->] (-0.75,1.5) -- (-0.75,3.5);
\node at (-1.75,3) {$\ss \{1,4\}\equiv$}; \node[tableau] at (-2.65,3.75) {&\\};}
\puzzle[\u]{{0, {0, 0}, 0}, {0, {0, 0}, 1}, {0, {0, 1}, 0}, {2, {0, 1}, 
  1}, {2, {0, 2}, 0}, {2, {0, 2}, 1}, {2, {0, 3}, 0}, {4, {1, 0}, 
  0}, {4, {1, 0}, 1}, {1, {1, 1}, 0}, {1, {1, 1}, 1}, {1, {1, 2}, 
  0}, {4, {2, 0}, 0}, {8, {2, 0}, 1}, {2, {2, 1}, 0}, {3, {3, 0}, 0}}
\end{tikzpicture}
\end{center}
Observe that these subsets all have the same cardinality, say $k$, so that we can associate to each
sequence a Young diagram inside the $k\times(n-k)$ rectangle, as indicated on the picture.

\subsection{Non-equivariant theorems}
Fix two integers $0\leq k\leq n$.
The first two theorems are special cases of our results corresponding to non-equivariant $K$-theory $K(Gr(k,n))$
(which formally amounts to setting all $y_i$ to $1$).
\begin{thm}[Vakil \cite{Vakil}]\label{thm:nonequiv}
Denote $G^\lambda(\x)=G^\lambda(\x;1,\ldots,1)$.
Given two Young diagrams $\lambda$, $\mu$ inside the $k\times(n-k)$ rectangle,
write, as an identity in $K(Gr(k,n))$,
\begin{align*}
G^\lambda(\x)G^\mu(\x)=\sum_\nu c^{\lambda,\mu}_{\,\,\nu} G^\nu(\x)
\end{align*}
where $\nu$ also runs over Young diagrams inside the $k\times(n-k)$ rectangle.
Then $c^{\lambda,\mu}_{\,\,\nu}$ is $(-1)^{|\nu|-|\lambda|-|\mu|}$ 
times the number of puzzles in a triangle pointing up,
with sides labelled $\lambda,\mu,\nu$ all oriented to the right:
\begin{center}
\def\size{3}
\begin{tikzpicture}[puz]
\draw (0,0) -- (\size,0) -- (0,\size) -- cycle;
\draw[<-] (2.75,-0.75) -- (1.25,-0.75);
\node at (2.25,-1.25) {$\ss\mu$};
\draw[<-] (2.5,1.25) -- (1.25,2.5);
\node at (2.15,2.15) {$\ss\nu$};
\draw[<-] (-0.75,1.25) -- (-0.75,2.75);
\node at (-1.25,2.25) {$\ss\lambda$};
\end{tikzpicture}
\end{center}
\end{thm}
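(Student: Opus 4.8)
The plan is to obtain Theorem~\ref{thm:nonequiv} as the simplest specialization of the integrable machinery outlined in the introduction, evaluated at the non-equivariant point $y_1=\cdots=y_n=1$ with the secondary partitions $\tilde\lambda$ and $\tilde\mu$ set to $\varnothing$. Under the Motegi--Sakai realization (step~\textbf{1}), each $G^\lambda(\x;\y)$ is the partition function of the five-vertex model on a strip whose boundary data encodes the frame of $\lambda$; after embedding this model in the rank-two (eleven-vertex) solution of the Yang--Baxter equation (step~\textbf{2}), I would assemble the product $G^\lambda(\x)G^\mu(\x)$ as a single partition function on a triangular domain, with two of the three families of lattice lines carrying the boundary data of $\lambda$ and $\mu$ and the third family left free to carry the sum over $\nu$.

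I would then evaluate this partition function in two ways, as in the schematic identity \eqref{schem-master}. Repeated application of the Yang--Baxter equation commutes the free family of lines past the other two, and a freezing argument (the grey regions of \eqref{schem-master}) collapses the hexagonal domain down to the up-triangle, leaving only the nontrivial region. One order of evaluation reproduces the left-hand side $G^\lambda(\x)G^\mu(\x)$, while the other produces $\sum_\nu Z_\nu\,G^\nu(\x)$, where $Z_\nu$ is the partition function on the up-triangle whose three sides are frozen to the frames of $\lambda$, $\mu$ and $\nu$. Comparing this with the defining expansion and using that the $G^\nu$ are linearly independent --- equivalently, pairing against the dual basis via $\langle G^\lambda G_\mu\rangle=\delta^\lambda_\mu$ --- identifies $c^{\lambda,\mu}_{\,\,\nu}=Z_\nu$, so it remains only to recognize $Z_\nu$ as the signed count of puzzles.

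For this identification I expect that, at $\y=(1,\dots,1)$ and with the horizontal spectral parameters suitably equalized, every local Boltzmann weight of the eleven-vertex model degenerates to $0$ or $\pm1$, the nonzero weights being carried by precisely the eleven triangular tiles displayed above, with the $K$-tile carrying weight $-1$ and all remaining tiles weight $+1$. The arrow- and colour-conservation law at each vertex then coincides exactly with the puzzle rule that red and green lines be continuous across shared edges, so that admissible configurations of the model are in bijection with puzzle tilings of the up-triangle having sides $\lambda,\mu,\nu$ oriented to the right (via the frame correspondence of the previous subsection). Each such tiling contributes its weight $(-1)^{r}$, where $r$ is its number of $K$-tiles, giving $Z_\nu=\sum_{\text{puzzles}}(-1)^{r}$.

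It then remains to show that $r$ is in fact a boundary invariant equal to $|\nu|-|\lambda|-|\mu|$: without any $K$-tile the tiling is area-preserving (this is the pure-cohomology situation, where $|\nu|=|\lambda|+|\mu|$), and each $K$-tile shifts the area balance by exactly one unit, so $r$ is forced by the boundary frames alone. This produces the uniform sign $(-1)^{|\nu|-|\lambda|-|\mu|}$ and hence $c^{\lambda,\mu}_{\,\,\nu}=(-1)^{|\nu|-|\lambda|-|\mu|}\cdot\#\{\text{puzzles}\}$. I expect the main obstacle to be twofold: first, making the freezing/collapse step rigorous, so that only the three nontrivial regions genuinely survive and the boundary data are read off correctly as frames; and second, the sign bookkeeping --- proving that every puzzle with a fixed boundary contains exactly $|\nu|-|\lambda|-|\mu|$ copies of the $K$-tile, which is precisely what converts the a priori signed sum over configurations into a signed count of puzzles.
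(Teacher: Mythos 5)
Your plan follows essentially the same route as the paper: Grothendieck polynomials as five-vertex partition functions, embedding into the rank-two model, two Yang--Baxter evaluations of a hexagonal partition function, degeneration of the weights at $y_i=1$ (which gives the equivariant tiles weight $1-w=0$ and the $K$-tile weight $-1$), and the observation that the number of $K$-tiles is then forced by the boundary to equal $|\nu|-|\lambda|-|\mu|$ --- precisely the two ingredients the paper invokes when it obtains Theorem~\ref{thm:nonequiv} as the non-equivariant specialization of Theorem~\ref{thm:equiv}. The one caution is that your step ``one evaluation gives $G^\lambda G^\mu$, the other gives $\sum_\nu Z_\nu G^\nu$'' is only schematic: in the paper both sides of the master identity \eqref{eq:master} are sums of lozenge puzzles multiplied by products of a Grothendieck and a dual Grothendieck polynomial, and extracting the product rule requires the splitting/freezing Lemma~\ref{lem:split}, the resummation identity \eqref{eq:id}, a chain of intermediate specializations, and a separate stability argument to remove the ``$n$ large enough'' hypothesis under which the hexagonal identity is first established.
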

\begin{rmk}
One can actually consider the expansion above for the polynomials themselves (and not their classes in $K(Gr(k,n))$),
the only difference being that the sum over $\nu$ is now over all partitions with at most $k$ rows.
Then the theorem still applies as written, because of a {\em stability}\/ property of Grothendieck polynomials
and of Littlewood--Richardson coefficients that will be discussed in Sect.~\ref{sec:stab}.
\end{rmk}

\begin{thmd}\label{thm:nonequivdual}
Denote $G_\lambda(\x)=G_\lambda(\x;1,\ldots,1)$.
Write, as an identity in $K(Gr(k,n))$,
\begin{align*}
G_\lambda(\x)G_\mu(\x)=\sum_\nu c_{\lambda,\mu}^{\,\,\nu} G_\nu(\x).
\end{align*}
Given three Young diagrams $\lambda$, $\mu$, $\nu$ inside the $k\times(n-k)$ rectangle,
$c_{\lambda,\mu}^{\,\,\nu}$ is 
$(-1)^{|\lambda|+|\mu|-|\nu|-k(n-k)}$ 
times the number of puzzles in a triangle pointing down,
with sides labelled $\lambda,\mu,\nu$ all oriented to the right:
\begin{center}
\def\size{3}
\begin{tikzpicture}[puz,scale=-1]
\draw (0,0) -- (\size,0) -- (0,\size) -- cycle;
\draw[->] (2.75,-0.75) -- (1.25,-0.75);
\node at (2.25,-1.25) {$\ss\lambda$};
\draw[->] (2.5,1.25) -- (1.25,2.5);
\node at (2.15,2.15) {$\ss\nu$};
\draw[->] (-0.75,1.25) -- (-0.75,2.75);
\node at (-1.25,2.25) {$\ss\mu$};
\end{tikzpicture}
\end{center}
\end{thmd}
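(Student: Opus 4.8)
The natural first move is to reduce Theorem~\ref{thm:nonequivdual} to the already-stated Theorem~\ref{thm:nonequiv} by means of the definition of the dual polynomials. Specialising $G_\lambda(\x;\y)=\prod_{i=1}^k x_i\prod_{i\in\lambda}y_i^{-1}\,G^{\lambda^\ast}(\x;\yy)$ at $\y=(1,\dots,1)$ collapses the $y$-factor and yields the clean identity $G_\lambda(\x)=e_k(\x)\,G^{\lambda^\ast}(\x)$, where $e_k(\x)=\prod_{i=1}^k x_i$ is a unit in $K(Gr(k,n))$ (it is the class of the top exterior power, a line bundle, of the tautological bundle). I would therefore write $G_\lambda G_\mu=e_k^2\,G^{\lambda^\ast}G^{\mu^\ast}$, expand the product of ordinary classes by Theorem~\ref{thm:nonequiv}, and re-expand the result in the dual basis via $G_\nu=e_k\,G^{\nu^\ast}$. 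Because $e_k$ is a unit one factor cancels, and matching coefficients reduces the whole statement to understanding how multiplication by $e_k$ acts in the Grothendieck basis. The claimed sign is then automatic: since $|\lambda^\ast|=k(n-k)-|\lambda|$, the upward-triangle prefactor $(-1)^{|\nu^\ast|-|\lambda^\ast|-|\mu^\ast|}$ of Theorem~\ref{thm:nonequiv} becomes exactly $(-1)^{|\lambda|+|\mu|-|\nu|-k(n-k)}$ once the complements are unwound, which is precisely where the $-k(n-k)$ originates.

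The point at which this plan stalls — and which I expect to be the main obstacle — is exactly the action of $e_k$. One would like a $180^\circ$ rotation to furnish a bijection between downward puzzles with sides $\lambda,\mu,\nu$ and upward puzzles with sides $\lambda^\ast,\mu^\ast,\nu^\ast$, giving the naive identity $c_{\lambda,\mu}^{\,\,\nu}=c^{\lambda^\ast,\mu^\ast}_{\,\,\nu^\ast}$. This is false: the $K$-tile is non-rotatable and can occupy only upward cells, while an upward and a downward triangle of the same size contain different numbers of upward cells, so rotation does not preserve the tile set. Algebraically this is the same phenomenon as the leftover factor of $e_k$ (so that $c_{\lambda,\mu}^{\,\,\nu}\ne c^{\lambda^\ast,\mu^\ast}_{\,\,\nu^\ast}$ in general, as one already sees for $Gr(1,2)$), and it shows that no purely symmetry-based argument can suffice; genuine model input is required.

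To supply that input I would return to the integrable construction. The definition above realises $G_\lambda$ through $G^{\lambda^\ast}$ with the secondary alphabet reversed to $\yy$, so the five-vertex model and its Yang--Baxter equation apply to the dual polynomials with no modification. Concretely, I would specialise the partition-function identity \eqref{schem-master} so as to freeze the ordinary-polynomial and lozenge regions — setting the appropriate members of $\{\lambda,\mu,\tilde\lambda,\tilde\mu\}$ to $\varnothing$ and identifying the relevant spectral parameters — leaving the dual product rule $G_\lambda G_\mu=\sum_\nu c_{\lambda,\mu}^{\,\,\nu}G_\nu$ to be computed by a single downward-triangle partition function. Reading the admissible boundary states off the three sides recovers $\lambda,\mu,\nu$ all oriented to the right, the bulk sum over tiles produces the puzzle count, and the normalisation of the frozen regions supplies the overall sign; setting $\y=(1,\dots,1)$ finally gives the non-equivariant statement. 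The delicate steps are pinning down the correct freezing specialisation and tracking the normalisation through the reversed alphabet, which together account for both the downward orientation of the triangle and the sign $(-1)^{|\lambda|+|\mu|-|\nu|-k(n-k)}$.
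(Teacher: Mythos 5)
Your opening reduction via $G_\lambda=\prod_{i=1}^k x_i\cdot G^{\lambda^\ast}$ and your diagnosis of why it stalls are both correct --- this is essentially the content of the paper's Remark following Theorem~\ref{thm:nonequivdual} (the 180 degree rotation turns the $K$-tile into an inverted $K$-tile, so downward puzzles are not in bijection with upward puzzles for the complemented partitions) --- and your fallback to the integrable machinery is indeed the paper's route: Theorem~\ref{thm:nonequivdual} is obtained as the $\y=(1,\dots,1)$ specialization of the equivariant Theorem~\ref{thm:equivdual}, which in turn is extracted from the master identity. So the overall strategy matches the paper's.

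There are, however, two genuine gaps in the sketch. First, the sign. You attribute the prefactor $(-1)^{|\lambda|+|\mu|-|\nu|-k(n-k)}$ to ``the normalisation of the frozen regions,'' but that is not its source. At $\y=(1,\dots,1)$ the equivariant tiles (weight $1-w$) are killed and each $K$-tile inside the down-pointing triangle carries weight $-w=-1$; the theorem asserts a \emph{uniform} sign times a plain \emph{count}, and this requires the separate observation --- stated explicitly by the paper when it reduces Theorem~\ref{thm:nonequivdual} to Theorem~\ref{thm:equivdual} --- that every puzzle with boundary $\lambda,\mu,\nu$ and no equivariant tiles contains exactly $|\lambda|+|\mu|-|\nu|-k(n-k)$ $K$-tiles. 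Without that conservation/degree argument you obtain only a signed enumeration, not $\pm(\text{number of puzzles})$. Second, the downward triangle does not fall directly out of freezing regions of the hexagon: the master identity produces a lozenge-shaped region bordered by four partitions, and one needs the paper's key splitting lemma (Lemma~\ref{lem:split}) to show that when the two alphabets of that lozenge coincide it factors across its horizontal diagonal and its top half freezes with weight $1$, leaving precisely the down-pointing triangle with sides $\lambda,\mu,\nu$. ``Pinning down the correct freezing specialisation'' is exactly where this lemma does the work; it is the technical heart of the derivation rather than a bookkeeping detail.
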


\begin{rmk}
We can equivalently formulate the theorem in terms of $\tilde G^\lambda(\x):=\prod_{i=1}^k x_i\, G^\lambda(\x)$ as follows: the
coefficient of the expansion of $\tilde G^\lambda(\x)\tilde G^\mu(\x)$ in the $\tilde G^\nu(\x)$ is given
by $(-1)^{|\nu|-|\lambda|-|\mu|}$ times the number of puzzles with boundary conditions as above, except all orientations are reversed. It is tempting
at this stage to rotate the triangle 180 degrees. However the price to pay is that one tile is not invariant under this transformation: the $K$-tile
\tile{{8,{0,0},0}}
gets replaced with the inverted $K$-tile
\tilei{{8,{0,0},1}}.
In the end we obtain boundary conditions which look similar to the ones
in Theorem~\ref{thm:nonequiv},
\begin{center}
\def\size{3}
\begin{tikzpicture}[puz]
\draw (0,0) -- (\size,0) -- (0,\size) -- cycle;
\draw[<-] (2.75,-0.75) -- (1.25,-0.75);
\node at (2.25,-1.25) {$\ss\mu$};
\draw[<-] (2.5,1.25) -- (1.25,2.5);
\node at (2.15,2.15) {$\ss\nu$};
\draw[<-] (-0.75,1.25) -- (-0.75,2.75);
\node at (-1.25,2.25) {$\ss\lambda$};
\node at (1,1) {$\circlearrowleft$};
\end{tikzpicture}
\end{center}
but in which the symbol $\circlearrowleft$ is meant to remind us of the
transformation of the $K$-tile.

One advantage of this transformation is that it makes it clear that in the cohomology limit in which the $K$-tile is forbidden,
%\footnote{\hypertarget{foot:rot}{}The justification for forbidding $K$-tiles is as follows: first introduce a homogeneity parameter $\beta$, modifying the weight of the equivariant tile to $(w-1)/\beta$ and that of the $K$-tile to $\beta$ (the current weights corresponding to $\beta=-1$). 
%Then $\beta\to 0$ (with 
%$w=e^{\beta x}$ and fixed $x$) reproduces the weights of \cite{z-j} and corresponds to the limit
%from $K$-theory to cohomology.}
%\rem{maybe cite Rosu?}
Theorems~\ref{thm:nonequiv} and \ref{thm:nonequivdual} become identical and in fact correspond to the standard Littlewood--Richardson rule, as formulated in \cite{KT,KTW}.

However, in what follows, we shall not consider this 180 degree rotation and inverted $K$-tile; the reason is that
the generalization to the equivariant setting would become rather confusing,
since the weights we shall assign to the various
tiles are not 180 degree rotationally invariant.
\end{rmk}

\newcommand\stableau[1]{\tikz[baseline=0]{%sub/superscript tableau
\setlength{\cellsize}{4.5pt}
\node[tableau]{#1};
\path 
(current bounding box.north west) coordinate (A) 
([yshift=-\cellsize]current bounding box.north east) coordinate (B)
([yshift=\cellsize]current bounding box.south east) coordinate (C);
\pgfresetboundingbox
\useasboundingbox (A) (B) (C);
}}

\begin{exlink}{http://www.lpthe.jussieu.fr/~pzinn/puzzles/?height=2&width=2&y1=2,2&y1comp&y2=2,1&y2comp&Kinv&mask=35&view=9&process}
Set $n=4$, $k=2$. Consider $G_{\stableau{&\\&\\}}(\x)G_{\stableau{&\\\\}}(\x)$.
%\tilde G^{\varnothing}(\x)\tilde G^{\stableau{\\}}(\x)$.
Using the explicit formulae
\begin{align*}
G^{\varnothing}(\x)&=1,
\qquad
G^{\stableau{\\}}(\x)=1-\prod_{i=1}^kx_i,
\qquad
G^{\stableau{&\\}}(\x)=
1+
\left(-(k+1)+\sum_{i=1}^k x_i\right)
\prod_{i=1}^k x_i,
\\
G^{\stableau{\\\\}}(\x)&=
1+
\left(k-1-\sum_{i=1}^k x_i^{-1}\right)
\prod_{i=1}^k x_i,
\qquad
G^{\stableau{&\\\\}}(\x)=1-\left(
\prod_{i=1}^k x_i
+\sum_{i=1}^k x_i^{-1}
-\sum_{i=1}^k x_i
\right)
\prod_{i=1}^kx_i,
\end{align*}
and $G_\lambda(\x)=\prod_{i=1}^k x_i\ G^{\lambda^\ast}(\x)$,
we find
% \begin{align*}
% \tilde G^{\varnothing}(\x)\tilde G^{\stableau{\\}}(\x)
% =
% \tilde G^{\stableau{\\}}(\x)
% -
% \tilde G^{\stableau{&\\}}(\x)
% -
% \tilde G^{\stableau{\\\\}}(\x)
% +
% \tilde G^{\stableau{&\\\\}}(\x)
% \end{align*}
% or equivalently,
\begin{align*}
G_{\stableau{&\\&\\}}(\x)G_{\stableau{&\\\\}}(\x)
=
G_{\stableau{&\\\\}}(\x)
-
G_{\stableau{&\\}}(\x)
-
G_{\stableau{\\\\}}(\x)
+
G_{\stableau{\\}}(\x).
\end{align*}
There are indeed four puzzles: % for say $n=4$, $k=2$:
\begin{center}
\def\size{4}
\tikz[scale=-1]{\puzzle{{4,{0,0},0},{4,{0,0},1},{1,{0,1},0},{1,{0,1},1},{4,{0,2},0},{4,{0,2},1},{1,{0,3},0},{4,{1,0},0},{4,{1,0},1},{1,{1,1},0},{3,{1,1},1},{0,{1,2},0},{0,{2,0},0},{0,{2,0},1},{3,{2,1},0},{0,{3,0},0}}}\qquad
\tikz[scale=-1]{\puzzle{{4,{0,0},0},{4,{0,0},1},{1,{0,1},0},{1,{0,1},1},{4,{0,2},0},{8,{0,2},1},{2,{0,3},0},{4,{1,0},0},{4,{1,0},1},{1,{1,1},0},{3,{1,1},1},{3,{1,2},0},{0,{2,0},0},{0,{2,0},1},{3,{2,1},0},{0,{3,0},0}}}\qquad
\tikz[scale=-1]{\puzzle{{4,{0,0},0},{4,{0,0},1},{1,{0,1},0},{1,{0,1},1},{4,{0,2},0},{4,{0,2},1},{1,{0,3},0},{4,{1,0},0},{8,{1,0},1},{2,{1,1},0},{0,{1,1},1},{0,{1,2},0},{3,{2,0},0},{3,{2,0},1},{0,{2,1},0},{3,{3,0},0}}}\qquad
\tikz[scale=-1]{\puzzle{{4,{0,0},0},{4,{0,0},1},{1,{0,1},0},{1,{0,1},1},{4,{0,2},0},{8,{0,2},1},{2,{0,3},0},{4,{1,0},0},{8,{1,0},1},{2,{1,1},0},{0,{1,1},1},{3,{1,2},0},{3,{2,0},0},{3,{2,0},1},{0,{2,1},0},{3,{3,0},0}}}\qquad
\end{center}
\end{exlink}

\subsection{Equivariant theorems}
Now we provide equivariant versions of these theorems. This requires some extra setup. We glue together adjacent triangles that share a horizontal edge to form elementary lozenges. 
Each such lozenge acquires a weight given by the following table: 
\begin{equation}\label{eq:wei}
\vcenter{\hbox{\begin{tikzpicture}
\matrix[column sep=0.5cm,row sep=0.2cm]
{
\dtile{{2,{0,0},0},{2,{0,0},1}}
&
\dtile{{0,{0,0},0},{2,{0,0},1}}
&
\dtile{{2,{0,0},0},{0,{0,0},1}}
&
\dtile{{0,{0,0},0},{0,{0,0},1}}
&
\dtile{{7,{0,0},0},{7,{0,0},1}}
&
\dtile{{1,{0,0},0},{1,{0,0},1}}
&
\dtile{{4,{0,0},0},{4,{0,0},1}}
&
\dtile{{3,{0,0},0},{1,{0,0},1}}
&
\dtile{{1,{0,0},0},{3,{0,0},1}}
&
\dtile{{3,{0,0},0},{3,{0,0},1}}
&
\dtile{{8,{0,0},0},{4,{0,0},1}}
\\
\node{$1$};
&
\node{$1$};
&
\node{$w$};
&
\node{$1$};
&
\node{$1-w$};
&
\node{$1$};
&
\node{$1$};
&
\node{$w$};
&
\node{$1$};
&
\node{$1$};
&
\node{$-w$};
\\
};
\end{tikzpicture}}}
\end{equation}
Here $w$ is a ratio of equivariant parameters, depending on the location of the lozenge, as explained
in the statement of the theorems below.
The total weight of an equivariant puzzle is the product of the weights of all its elementary lozenges.

We should remark that, as in the non-equivariant case, all expansions written below are considered as identities in $K_T(Gr(k,n))$; we postpone the discussion of stability to Sect.~\ref{sec:stab}.

\begin{thm}\label{thm:equiv}
Write
\begin{align*}
G^\lambda(\x;\y)G^\mu(\x;\y)=\sum_\nu c^{\lambda,\mu}_{\,\,\nu}(\y) G^\nu(\x;\y).
\end{align*}
Given three Young diagrams $\lambda$, $\mu$, $\nu$ inside the $k\times(n-k)$ rectangle,
$c^{\lambda,\mu}_{\,\,\nu}(\y)$ is the sum of weights of equivariant
puzzles in a triangle pointing up,
with sides labelled $\lambda,\mu,\nu$ all oriented to the right, the parameter in the weights
being given by $w=y_i/y_j$ for an elementary lozenge whose coordinates $(i,j)$ are defined as follows:
\begin{center}
\def\size{3}
\begin{tikzpicture}[puz]
\draw (0,0) -- (\size,0) -- (0,\size) -- cycle;
\draw (0.5,0.5) -- (0.5,1) -- (1,1) -- (1,0.5) -- cycle;
\node[below=-0.5mm] at (0,3) {$\ss 1$};
\node[below=-0.5mm] at (0.35,2.65) {$\ss \leq$};
\node[below=-0.5mm] at (1.5,1.5) {$\ss <$};
\node[below=-0.5mm] at (2.65,0.35) {$\ss \leq$};
\node[below=-0.1mm] at (3,0) {$\ss n$};
\draw[dashed] (0.75,1) -- (0.75,2.25) node[below=-0.5mm] {$\ss j$};
\draw[dashed] (1,0.75) -- (2.25,0.75) node[below=-0.5mm] {$\ss i$};
\draw[<-] (2.75,-0.75) -- (1.25,-0.75);
\node at (2.25,-1.25) {$\ss\mu$};
\draw[<-] (2.55,1.3) -- (1.3,2.55);
\node at (2.2,2.2) {$\ss\nu$};
\draw[<-] (-0.75,1.25) -- (-0.75,2.75);
\node at (-1.25,2.25) {$\ss\lambda$};
\end{tikzpicture}
\end{center}
\end{thm}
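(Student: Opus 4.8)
The plan is to obtain Theorem~\ref{thm:equiv} as a specialization of the master equation~\eqref{eq:master}, following the four-step program sketched in the introduction. The starting point (Steps~\textbf{1}--\textbf{2}) is the realization of each double Grothendieck polynomial $G^\lambda(\x;\y)$ as a partition function of the five-vertex model, with the horizontal spectral parameters supplying the alphabet $\x$ and the vertical rapidities supplying the equivariant parameters $\y$, the boundary data along one edge encoding the frame of $\lambda$. I would first confirm that this representation is consistent with the inductive definition~\eqref{ind-0}--\eqref{ind-i}: the top boundary condition reproduces the initial polynomial~\eqref{ind-0}, and exchanging two adjacent vertical rapidities via the $R$-matrix acts on the partition function as the divided difference $D_i$ on $\y$, matching~\eqref{ind-i}. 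This anchors the combinatorial model to the polynomials actually being multiplied.

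With the model fixed, the product $G^\lambda(\x;\y)\,G^\mu(\x;\y)$ is represented by stacking two such lattices, and the coefficients are extracted via the dual-basis pairing, $c^{\lambda,\mu}_{\,\,\nu}(\y)=\langle G^\lambda G^\mu G_\nu\rangle$. The decisive step (Step~\textbf{3}) is the Yang--Baxter argument: after embedding the five-vertex model in the rank-two eleven-vertex model of Section~\ref{sec:model}, one inserts the $R$-matrix and drags it across the lattice, evaluating the same partition function in two ways. This produces~\eqref{eq:master}, whose three non-frozen regions are a Grothendieck polynomial, a dual polynomial, and a lozenge bounded by four partitions, with the grey regions frozen.

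To isolate the present product rule I would specialize~\eqref{eq:master} by setting $\tilde\lambda=\tilde\mu=\varnothing$ and identifying the two families of spectral parameters, so that the dual-polynomial region trivializes and the lozenge region freezes to a single admissible filling; what survives is one up-pointing triangle whose three sides carry $\lambda$, $\mu$ and $\nu$ oriented to the right, exactly the domain in the statement. The last task (Step~\textbf{4}) is purely local: glue adjacent triangles sharing a horizontal edge into elementary lozenges and match the eleven-vertex Boltzmann weights against table~\eqref{eq:wei}. The ratio of the vertical rapidities attached to a given lozenge is precisely $y_i/y_j$ for the coordinates $(i,j)$ read off as in the figure, which fixes $w=y_i/y_j$, and the frozen grey regions contribute a factor of $1$.

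I expect the main obstacle to be twofold. First, the weight dictionary must be verified entry by entry, in particular reproducing the distinguished $K$-tile weight $-w$ with its sign and confirming that no other tile acquires an equivariant factor; a single mismatched weight or misassigned coordinate would falsify the formula. Second, one must argue that the chosen specialization genuinely extracts this single product rule and not a linear combination of several: this requires showing that under the identification of spectral parameters the lozenge really freezes, and that the admissible boundary states along the triangle are in bijection with the frames of $\lambda,\mu,\nu$ with no spurious configurations. The purely inductive alternative---proving that the puzzle sum obeys the same $D_i$-recursion as $G^\lambda$---seems harder to control, since relating a divided difference in a single variable to a local modification of puzzles is not transparent, so I would commit to the integrability route.
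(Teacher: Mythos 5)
Your overall strategy (specialize the master identity \eqref{eq:master} obtained from the Yang--Baxter equation) is the paper's strategy, but the specific specialization you commit to does not produce this theorem, and the proof is not the one-shot collapse you describe. Setting $\tilde\lambda=\tilde\mu=\varnothing$ and identifying the alphabets kills the two ``tilde'' partitions; the surviving polynomial regions are then $G_\nu(\x;\z)\,G^\mu(\x;\y)$ on one side and $G_{\varnothing^\ast}\,G^{\tilde\nu{}^\ast}$ on the other, i.e.\ you are expanding the product of a \emph{dual} Grothendieck polynomial with a Grothendieck polynomial. That specialization is exactly the paper's first case and yields Theorems \ref{thm:MSalt} and \ref{thm:equivdualalt}, not Theorem \ref{thm:equiv}. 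To get $G^\lambda G^\mu$ with both factors non-dual one must instead take $\lambda=\tilde\mu=\varnothing$, $\t=\zz$, rewrite $G_\nu(\x;\z)$ as $\prod x_i\prod_{i\in\nu}z_i^{-1}G^{\nu^\ast}(\x;\zz)$, and resum the left-hand side using the intermediate identity \eqref{eq:preequivalt} --- which itself has to be established first from the case $\lambda=\mu=\varnothing$, $\t=\y$. The paper's proof of Theorem \ref{thm:equiv} is the \emph{last} link in a chain (Theorems \ref{thm:MSalt}, \ref{thm:equivdualalt}, \ref{thm:equivalt}, \ref{thm:MS}, \ref{thm:equivdual}, then \ref{thm:equiv}); your proposal has no mechanism to bypass that chain.

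A second concrete error: you assert that upon identifying the spectral parameters ``the lozenge region freezes to a single admissible filling.'' By Lemma \ref{lem:split}, when the two alphabets of a lozenge coincide it splits into an up-triangle and a down-triangle, and only the half adjacent to an \emph{empty} partition freezes; the other half contributes a signed sum over $2^{g}$ configurations indexed by partitions $\sigma$ with $\sigma^\ast\vartriangleright\nu$. These signed contributions do not disappear --- they must be recombined using the identity $\sum_{\nu\vartriangleright\mu}(-1)^{|\nu|-|\mu|}G^\nu=G_{\mu^\ast}$ (equation \eqref{eq:id}), which is itself derived from yet another specialization of \eqref{eq:master}. Without Lemma \ref{lem:split} and \eqref{eq:id} your argument cannot close. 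Finally, the master identity is only valid for $n$ large enough (needed to control region \bl{J}), so a separate stability argument (Sect.~\ref{sec:stab}) is required to extend the theorem to all admissible $n$; this is absent from your proposal.
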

Equivalently, the theorem gives a combinatorial formula
for $\left<G^\lambda G^\mu G_\nu\right>$ in $K_T(Gr(k,n))$.
%\rem{a word about geometric meaning? not so easy: ideal sheaf inside a Richardson variety expanded in ideal sheaves...}

\begin{exlink}{http://www.lpthe.jussieu.fr/~pzinn/puzzles/?height=2&width=3&y1=2&y2=1&y3=3,1&y3comp&K&equiv&mask=35&process}
Consider $n=5$, $k=2$ and
the coefficient of $G^{\stableau{&&\\\\}}(\x;\y)$ in $G^{\stableau{&\\}}(\x;\y)G^{\stableau{\\}}(\x;\y)$.
We obtain a single equivariant puzzle:
\begin{center}
\def\size{5}
\equivpuzzle{{4, {0, 0}, 0}, {4, {0, 0}, 1}, {1, {0, 1}, 0}, {3, {0, 1}, 
  1}, {0, {0, 2}, 0}, {2, {0, 2}, 1}, {2, {0, 3}, 0}, {2, {0, 3}, 
  1}, {2, {0, 4}, 0}, {0, {1, 0}, 0}, {2, {1, 0}, 1}, {8, {1, 1}, 
  0}, {4, {1, 1}, 1}, {1, {1, 2}, 0}, {1, {1, 2}, 1}, {1, {1, 3}, 
  0}, {1, {2, 0}, 0}, {1, {2, 0}, 1}, {4, {2, 1}, 0}, {4, {2, 1}, 
  1}, {1, {2, 2}, 0}, {1, {3, 0}, 0}, {3, {3, 0}, 1}, {0, {3, 1}, 
  0}, {3, {4, 0}, 0}}
\end{center}
and a single nontrivial weight where the $K$-tile lies, resulting in the coefficient $-y_4/y_2$.
Interestingly, we can instead compute
$G^{\stableau{\\}}(\x;\y)G^{\stableau{&\\}}(\x;\y)$ (\ie switch $\lambda$ and $\mu$) and also obtain a single puzzle:
\begin{center}
\def\size{5}
\equivpuzzle{{4, {0, 0}, 0}, {4, {0, 0}, 1}, {1, {0, 1}, 0}, {1, {0, 1}, 
  1}, {1, {0, 2}, 0}, {3, {0, 2}, 1}, {0, {0, 3}, 0}, {2, {0, 3}, 
  1}, {2, {0, 4}, 0}, {4, {1, 0}, 0}, {4, {1, 0}, 1}, {1, {1, 1}, 
  0}, {3, {1, 1}, 1}, {3, {1, 2}, 0}, {1, {1, 2}, 1}, {1, {1, 3}, 
  0}, {0, {2, 0}, 0}, {2, {2, 0}, 1}, {8, {2, 1}, 0}, {4, {2, 1}, 
  1}, {1, {2, 2}, 0}, {1, {3, 0}, 0}, {3, {3, 0}, 1}, {0, {3, 1}, 
  0}, {3, {4, 0}, 0}}
\end{center}
This time, besides the $K$-tile giving a weight of $-y_3/y_2$, we have to its right a tile giving a weight of $y_4/y_3$,
hence the same result $-y_4/y_2$.

The same example was treated in \cite{PY}, but there several diagrams were required to perform the calculation.\footnote{Note that there is an error in the example on page 3 of the current version of \cite{PY}. The sum wt$(P_2)$+wt$(P_3)$+wt$(P_5)$+wt$(P_6)$ simplifies to $-t_2/t_4$, rather than $t_2/t_4-1$ as written. This then agrees with our example above, since the equivariant parameters in \cite{PY} are ordered in reverse with respect to ours.}
\end{exlink}

\begin{exlink}{http://www.lpthe.jussieu.fr/~pzinn/puzzles/?height=2&width=3&y1=2&y2=2,1&y3=3,2&y3comp&K&equiv&mask=35&process}
Consider $n=5$, $k=2$ and
the coefficient of $G^{\stableau{&&\\&\\}}(\x;\y)$ in $G^{\stableau{&\\}}(\x;\y)G^{\stableau{&\\\\}}(\x;\y)$.
We obtain two equivariant puzzles, one of which actually contains an equivariant tile:
\begin{center}
\def\size{5}
\equivpuzzle[\node at (-\size*0.35,\size*0.6) {$\ss \frac{y_4y_5}{y_1y_3}$};]{{1,{0,0},0},{3,{0,0},1},{0,{0,1},0},{2,{0,1},1},{2,{0,2},0},{0,{0,2},1},{0,{0,3},0},{2,{0,3},1},{2,{0,4},0},{3,{1,0},0},{1,{1,0},1},{1,{1,1},0},{1,{1,1},1},{4,{1,2},0},{4,{1,2},1},{1,{1,3},0},{1,{2,0},0},{1,{2,0},1},{1,{2,1},0},{3,{2,1},1},{0,{2,2},0},{1,{3,0},0},{3,{3,0},1},{3,{3,1},0},{3,{4,0},0}}\qquad
\equivpuzzle[\node at (-\size*0.35,\size*0.6) {$\ss -\frac{y_4}{y_3}(1-\frac{y_4}{y_1})$};]{{1,{0,0},0},{1,{0,0},1},{4,{0,1},0},{4,{0,1},1},{1,{0,2},0},{3,{0,2},1},{0,{0,3},0},{2,{0,3},1},{2,{0,4},0},{7,{1,0},0},{7,{1,0},1},{0,{1,1},0},{2,{1,1},1},{8,{1,2},0},{4,{1,2},1},{1,{1,3},0},{1,{2,0},0},{1,{2,0},1},{1,{2,1},0},{3,{2,1},1},{0,{2,2},0},{1,{3,0},0},{3,{3,0},1},{3,{3,1},0},{3,{4,0},0}}
\end{center}
The sum of weights is $\frac{y_4}{y_1y_3}(-y_1+y_4+y_5)$, which agrees with direct computation
of $\big\langle G^{\stableau{&\\}} G^{\stableau{&\\\\}}G_{\stableau{&&\\&\\}}\big\rangle$.
\end{exlink}

\begin{thmd}\label{thm:equivdual}
Write
\begin{align*}
G_\lambda(\x;\y)G_\mu(\x;\y)=\sum_\nu c_{\lambda,\mu}^{\,\,\nu}(\y) G_\nu(\x;\y).
\end{align*}
Given three Young diagrams $\lambda$, $\mu$, $\nu$ inside the $k\times(n-k)$ rectangle,
$c_{\lambda,\mu}^{\,\,\nu}(\y)$ is the sum of weights of equivariant
puzzles in a triangle pointing down,
with sides labelled $\lambda,\mu,\nu$ all oriented to the right, the parameter in the weights being given by $w=y_i/y_j$ for an elementary lozenge whose coordinates $(i,j)$ are defined as follows:
\begin{center}
\def\size{3}
\begin{tikzpicture}[puz,scale=-1]
\draw (0,0) -- (\size,0) -- (0,\size) -- cycle;
\draw (0.5,0.5) -- (0.5,1) -- (1,1) -- (1,0.5) -- cycle;
\node[above=-0.1mm] at (0,3) {$\ss n$};
\node[above=-0.5mm] at (0.35,2.65) {$\ss \leq$};
\node[above=-0.5mm] at (1.5,1.5) {$\ss <$};
\node[above=-0.5mm] at (2.65,0.35) {$\ss \leq$};
\node[above=-0.25mm] at (3,0) {$\ss 1$};
\draw[dashed] (0.75,1) -- (0.75,2.25) node[above=-0.5mm] {$\ss j$};
\draw[dashed] (1,0.75) -- (2.25,0.75) node[above=-0.5mm] {$\ss i$};
\draw[->] (2.75,-0.75) -- (1.25,-0.75);
\node at (2.25,-1.25) {$\ss\lambda$};
\draw[->] (2.55,1.3) -- (1.3,2.55);
\node at (2.2,2.2) {$\ss\nu$};
\draw[->] (-0.75,1.25) -- (-0.75,2.75);
\node at (-1.25,2.25) {$\ss\mu$};
\end{tikzpicture}
\end{center}
\end{thmd}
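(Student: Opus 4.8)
The plan is to obtain Theorem~\ref{thm:equivdual} as a specialization of the master identity~\eqref{eq:master}, exactly as Theorem~\ref{thm:equiv} is obtained, but selecting the specialization that isolates a product of two \emph{dual} polynomials. The point of departure is the duality of the bases $\{G^\lambda\}$ and $\{G_\mu\}$ under the pushforward pairing $\left<\cdot\right>$: pairing the expansion $G_\lambda G_\mu=\sum_\nu c_{\lambda,\mu}^{\,\,\nu}(\y)\,G_\nu$ against $G^\nu$ and using $\left<G_\rho\,G^\nu\right>=\delta_\rho^\nu$ gives
\begin{align*}
c_{\lambda,\mu}^{\,\,\nu}(\y)=\left<G_\lambda\,G_\mu\,G^\nu\right>.
\end{align*}
Thus it suffices to realize this triple pairing as a lattice partition function and to read off its expansion over internal edge states as a weighted count of puzzle tilings.

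First I would specialize the four free partitions and the two families of spectral parameters in~\eqref{eq:master} so that the lozenge region collapses and the grey regions freeze, leaving a single triangular domain whose three boundaries encode $\lambda$, $\mu$ and $\nu$. The essential structural input is that $G_\lambda$ is built from $G^{\lambda^\ast}$ evaluated in the \emph{reversed} alphabet $\yy=(y_n,\dots,y_1)$; consequently the configuration computing $\left<G_\lambda\,G_\mu\,G^\nu\right>$ (two duals, one Grothendieck) is the $180^\circ$-rotated, down-pointing counterpart of the configuration computing $\left<G^\lambda\,G^\mu\,G_\nu\right>$ (two Grothendieck, one dual) in Theorem~\ref{thm:equiv}. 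This is precisely why the triangle here points down, and why the coordinate labelling of its sides is reversed relative to Theorem~\ref{thm:equiv}, running from $n$ to $1$ in place of $1$ to $n$.

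Next I would expand the resulting partition function over the states of all internal edges. Each admissible five-vertex configuration corresponds to one of the triangular tiles, and the requirement that red and green lines be continuous across shared edges is exactly the puzzle adjacency rule; gluing the two triangles meeting along each horizontal edge into an elementary lozenge reproduces the eleven weights of~\eqref{eq:wei}, with the parameter $w=y_i/y_j$ fixed by the pair of spectral lines threading that lozenge. Tracing which $y_i$ passes through a lozenge at a given position, with the ordering reversed by $\yy$, yields exactly the coordinate rule $(i,j)$ displayed alongside the down-pointing triangle.

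The main obstacle is bookkeeping rather than any fresh integrability input. One must verify that (i) the grey regions genuinely freeze under the chosen specialization and that the surviving boundary data encode $\lambda,\mu,\nu$ with the stated orientations; (ii) the reversal $\yy$ in the definition of $G_\lambda$ is consistent both with the down-pointing geometry and with the location rule $w=y_i/y_j$ having $n$ at the initial corner; and (iii) the weights of the $K$-tile and of the two equivariant tiles survive the specialization intact, with no spurious factors. Because the equivariant weights are not invariant under $180^\circ$ rotation---as already noted after Theorem~\ref{thm:nonequivdual} in the non-equivariant setting---this last point is where the dual rule genuinely differs from a naive rotation of Theorem~\ref{thm:equiv}, and it is the step demanding the most care.
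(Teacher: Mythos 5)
Your overall strategy---specialize the master identity \eqref{eq:master}---is the one the paper follows, but as written the plan has a genuine gap: no single specialization of \eqref{eq:master} produces a product of two \emph{dual} polynomials on one side. Each side of the master identity carries exactly one dual factor ($G_\nu(\x;\z)$ on the left, $G_{\tilde\mu{}^\ast}(\x;\z)$ on the right) paired with one non-dual factor, so the configuration computing $\left<G_\lambda G_\mu G^\nu\right>$ is not reachable by a one-shot ``collapse and read off.'' The actual route is a bootstrap: one first takes $\tilde\lambda=\tilde\mu=\varnothing$, $\t=\z$ and, with the help of the auxiliary summation identity \eqref{eq:id}, extracts the mixed-alphabet identity \eqref{eq:preMSalt} expressing $G_{\lambda^\ast}(\x;\z)G^\mu(\x;\y)$ as a lozenge-weighted sum of $G^{\tilde\nu{}^\ast}(\x;\y)$; only then, in the specialization $\tilde\lambda=\mu=\varnothing$, $\t=\yy$, can \eqref{eq:preMSalt} be applied to the right-hand side to turn it into the product $G_\lambda(\x;\z)G_\mu(\x;\y)$, which is Theorem~\ref{thm:MS}. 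Theorem~\ref{thm:equivdual} then follows by setting $\z=\y$. Your proposal skips this entire chain, so the quantity you want to expand never appears in the identity you are specializing.

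The second missing ingredient is Lemma~\ref{lem:split}, which you never invoke but which does all the work you attribute to ``the grey regions freeze'' and ``the lozenge collapses.'' When the two alphabets of the lozenge coincide, that lemma shows the diagonal tiles carrying both colours cannot occur, so the lozenge factors into an up-triangle glued to a down-triangle along a fully saturated two-colour string $\sigma$; and because one of the top partitions is $\varnothing$, the up-half is frozen with weight $1$, leaving exactly the down-pointing triangle of the statement. Without this lemma the splitting and the freezing are unjustified assertions, and the explicit weight of the frozen half (needed to check no spurious prefactors survive) is unavailable. Finally, note that the paper deliberately does \emph{not} argue by $180^\circ$ rotation from Theorem~\ref{thm:equiv}---indeed Theorem~\ref{thm:equivdual} is proven \emph{before} Theorem~\ref{thm:equiv} in the paper's ordering---precisely because the $K$-tile and the equivariant weights break that symmetry; leaning on the rotation heuristic, even with your caveat, does not substitute for the integrability argument.
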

The theorem gives this time a combinatorial formula
for $\left<G_\lambda G_\mu G^\nu\right>$ in $K_T(Gr(k,n))$.

\begin{exlink}{http://www.lpthe.jussieu.fr/~pzinn/puzzles/?height=2&width=3&y1=3,2&y2=3,1&y3=2&y1comp&y2comp&Kinv&equiv&mask=35&view=9&process}
In $Gr(2,5)$, calculate the coefficient of $G_{\stableau{&\\}}$ in $G_{\stableau{&&\\\\}}G_{\stableau{&&\\&\\}}$:
\begin{center}
\def\size{5}
\tikz[scale=-1]{\equivpuzzle[\node at (-\size*0.35,\size*0.6) {$\ss -\frac{y_1^2}{y_3y_5}$};]{{4,{0,0},0},{4,{0,0},1},{1,{0,1},0},{1,{0,1},1},{1,{0,2},0},{3,{0,2},1},{0,{0,3},0},{2,{0,3},1},{2,{0,4},0},{4,{1,0},0},{4,{1,0},1},{1,{1,1},0},{3,{1,1},1},{3,{1,2},0},{1,{1,2},1},{1,{1,3},0},{0,{2,0},0},{0,{2,0},1},{3,{2,1},0},{1,{2,1},1},{1,{2,2},0},{4,{3,0},0},{8,{3,0},1},{2,{3,1},0},{3,{4,0},0}}}\qquad
\tikz[scale=-1]{\equivpuzzle[\node at (-\size*0.35,\size*0.6) {$\ss -\frac{y_1^2y_4}{y_3y_5^2}$};]{{4,{0,0},0},{4,{0,0},1},{1,{0,1},0},{1,{0,1},1},{1,{0,2},0},{3,{0,2},1},{0,{0,3},0},{2,{0,3},1},{2,{0,4},0},{4,{1,0},0},{8,{1,0},1},{2,{1,1},0},{0,{1,1},1},{3,{1,2},0},{1,{1,2},1},{1,{1,3},0},{3,{2,0},0},{1,{2,0},1},{4,{2,1},0},{4,{2,1},1},{1,{2,2},0},{1,{3,0},0},{3,{3,0},1},{0,{3,1},0},{3,{4,0},0}}}\qquad
\tikz[scale=-1]{\equivpuzzle[\node at (-\size*0.35,\size*0.6) {$\ss \frac{y_1y_4}{y_3y_5}(1-\frac{y_3}{y_4})$};]{{4,{0,0},0},{4,{0,0},1},{1,{0,1},0},{1,{0,1},1},{1,{0,2},0},{1,{0,2},1},{4,{0,3},0},{8,{0,3},1},{2,{0,4},0},{4,{1,0},0},{4,{1,0},1},{1,{1,1},0},{1,{1,1},1},{1,{1,2},0},{3,{1,2},1},{3,{1,3},0},{0,{2,0},0},{0,{2,0},1},{7,{2,1},0},{7,{2,1},1},{3,{2,2},0},{4,{3,0},0},{8,{3,0},1},{2,{3,1},0},{3,{4,0},0}}}\\[2mm]
\tikz[scale=-1]{\equivpuzzle[\node at (-\size*0.35,\size*0.6) {$\ss \frac{y_1}{y_5}(1-\frac{y_2}{y_3})$};]{{4,{0,0},0},{4,{0,0},1},{1,{0,1},0},{1,{0,1},1},{1,{0,2},0},{1,{0,2},1},{4,{0,3},0},{8,{0,3},1},{2,{0,4},0},{4,{1,0},0},{4,{1,0},1},{1,{1,1},0},{3,{1,1},1},{7,{1,2},0},{7,{1,2},1},{3,{1,3},0},{0,{2,0},0},{0,{2,0},1},{3,{2,1},0},{1,{2,1},1},{1,{2,2},0},{4,{3,0},0},{8,{3,0},1},{2,{3,1},0},{3,{4,0},0}}}\qquad
\tikz[scale=-1]{\equivpuzzle[\node at (-\size*0.35,\size*0.6) {$\ss -\frac{y_1y_4}{y_5^2}$};]{{4,{0,0},0},{4,{0,0},1},{1,{0,1},0},{1,{0,1},1},{1,{0,2},0},{1,{0,2},1},{4,{0,3},0},{8,{0,3},1},{2,{0,4},0},{4,{1,0},0},{4,{1,0},1},{1,{1,1},0},{1,{1,1},1},{1,{1,2},0},{3,{1,2},1},{3,{1,3},0},{0,{2,0},0},{2,{2,0},1},{2,{2,1},0},{0,{2,1},1},{3,{2,2},0},{1,{3,0},0},{3,{3,0},1},{0,{3,1},0},{3,{4,0},0}}}\qquad
\tikz[scale=-1]{\equivpuzzle[\node at (-\size*0.35,\size*0.6) {$\ss \frac{y_1y_4}{y_5^2}(1-\frac{y_2}{y_3})$};]{{4,{0,0},0},{4,{0,0},1},{1,{0,1},0},{1,{0,1},1},{1,{0,2},0},{1,{0,2},1},{4,{0,3},0},{8,{0,3},1},{2,{0,4},0},{4,{1,0},0},{8,{1,0},1},{2,{1,1},0},{0,{1,1},1},{7,{1,2},0},{7,{1,2},1},{3,{1,3},0},{3,{2,0},0},{1,{2,0},1},{4,{2,1},0},{4,{2,1},1},{1,{2,2},0},{1,{3,0},0},{3,{3,0},1},{0,{3,1},0},{3,{4,0},0}}}
\end{center}
We find
$-\frac{y_1}{y_3 y_5^2} \left(y_1 y_4+y_2 y_4-y_5 y_4+y_1 y_5+y_2 y_5\right)$, which is confirmed by direct calculation
of $\big\langle
G_{\stableau{&&\\\\}}
G_{\stableau{&&\\&\\}}
G^{\stableau{&\\}}\big\rangle$.
\end{exlink}

This last theorem can be further generalized to a product where secondary alphabets differ:
\setcounter{thmdd}{1}
\begin{thmdd}\label{thm:MS}
Write
\begin{align*}
G_\lambda(\x;\z)G_\mu(\x;\y)=\sum_\nu c_{\lambda,\mu}^\nu(\z;\y) G_\nu(\x;\y).
\end{align*}
Given three Young diagrams $\lambda$, $\mu$, $\nu$ inside the $k\times(n-k)$ rectangle,
$c_{\lambda,\mu}^\nu(\z;\y)$ is the sum of weights of equivariant puzzles in a lozenge,
with sides labelled by Young diagrams $\lambda,\mu,\varnothing,\nu$ as indicated on the diagram, and the parameter $w$
in the weights being given by $w=y_i/z_j$ for an elementary lozenge at location $(i,j)$:
\begin{center}
\def\size{3}
\begin{tikzpicture}[puz]
\draw (0,0) -- (\size,0) -- (\size,\size) -- (0,\size) -- cycle;
\draw (0.5,0.5) -- (0.5,1) -- (1,1) -- (1,0.5) -- cycle;
\node[rotate=-60] at (0,3.3) {$\ss 1\leq$};
\node[rotate=-60] at (2.7,3.3) {$\ss \leq n$};
\draw[dashed] (0.75,1) -- (0.75,3) node[rotate=-60,below=-1mm] {$\ss j$};
\node[rotate=60] at (3.3,0) {$\ss \leq n$};
\node[rotate=60] at (3.3,2.7) {$\ss 1\leq$};
\draw[dashed] (1,0.75) -- (3,0.75) node[rotate=60,below=-1mm] {$\ss i$};
\draw[<-] (2.75,-0.75) -- (1.25,-0.75);
\node at (2.25,-1.25) {$\ss\varnothing$};
\draw[<-] (3.75,0.5) -- (3.75,2);
\node at (4.25,1) {$\ss\mu$};
\draw[->] (0.5,3.75) -- (2,3.75);
\node at (1,4.25) {$\ss\lambda$};
\draw[<-] (-0.75,1.25) -- (-0.75,2.75);
\node at (-1.25,2.25) {$\ss\nu$};
\end{tikzpicture}
\end{center}
\end{thmdd}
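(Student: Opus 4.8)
The plan is to realize $c_{\lambda,\mu}^\nu(\z;\y)$ as a lattice partition function and then identify it with the lozenge--puzzle sum, following the master-equation strategy of step \textbf{3}. The starting observation is that, since $\{G^\sigma\}$ and $\{G_\nu\}$ are dual bases under the pushforward pairing $\langle\cdot\rangle$, the structure constant is extracted by pairing both sides of the product rule with $G^\nu(\x;\y)$:
\begin{align*}
c_{\lambda,\mu}^\nu(\z;\y)=\big\langle G_\lambda(\x;\z)\,G_\mu(\x;\y)\,G^\nu(\x;\y)\big\rangle .
\end{align*}
Here $G_\lambda(\x;\z)$ carries the auxiliary alphabet $\z$ but is still a symmetric function of $\x$, so the pairing makes sense and produces an element of $\mathbb{Z}[\y^{\pm1},\z^{\pm1}]$. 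Thus the problem reduces to computing this triple pairing combinatorially.

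Next I would represent each of the three factors as a partition function of the eleven-vertex model of Section~\ref{sec:model}: the factor $G_\lambda(\x;\z)$ on a sector whose vertical rapidities are the alphabet $\z$, and the factors $G_\mu(\x;\y)$ and $G^\nu(\x;\y)$ on sectors whose vertical rapidities are $\y$; the boundary conditions on the three sectors encode the frames of $\lambda$, $\mu$ and $\nu$, and the pushforward amounts to summing over the shared horizontal ($\x$) edges. Geometrically this glues the three sectors along their $\x$-lines into a single partition function on a lozenge domain---exactly the lozenge region that appears in the schematic master equation \eqref{schem-master}. The Molev--Sagan case is then obtained as the specialization of the master equation \eqref{eq:master} in which the fourth boundary partition is set to $\varnothing$ (accounting for the $\varnothing$-side of the lozenge) while the two secondary alphabets $\z$ and $\y$ are kept \emph{distinct}, so that no identification of spectral parameters is made.

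The final step is to read off the puzzle description. Gluing the two triangular tiles that share a horizontal edge into elementary lozenges, the local Boltzmann weights of the model collapse to the table \eqref{eq:wei}; the rapidity crossing the lozenge at position $(i,j)$ is the ratio of a horizontal $\y$-rapidity and a vertical $\z$-rapidity, which yields precisely $w=y_i/z_j$. Summing the products of these weights over all admissible tilings of the lozenge with boundary $\lambda,\mu,\varnothing,\nu$ reproduces the partition function, and hence $c_{\lambda,\mu}^\nu(\z;\y)$.

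I expect the main obstacle to be the spectral-parameter bookkeeping: because two distinct alphabets now thread the lattice, one must track exactly which rapidity runs along each horizontal and vertical line so that the crossing parameter emerges as $y_i/z_j$ rather than the single-alphabet ratio $y_i/y_j$ of Theorem~\ref{thm:equivdual}. A secondary difficulty is verifying that the regions shown in grey in \eqref{schem-master} genuinely freeze once the fourth partition is set to $\varnothing$, so that only the lozenge sector survives and the partition function is not contaminated by contributions from the frozen sectors; this freezing is what allows the lozenge puzzle alone to compute the coefficient.
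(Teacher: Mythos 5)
Your reduction to the triple pairing $c_{\lambda,\mu}^\nu(\z;\y)=\left\langle G_\lambda(\x;\z)\,G_\mu(\x;\y)\,G^\nu(\x;\y)\right\rangle$ is formally fine, but the mechanism you then propose does not work. The lozenge puzzle is \emph{not} obtained by gluing the three polynomial sectors along their shared $\x$-lines: in \eqref{schem-master} and \eqref{eq:master} the lozenge regions are threaded only by the secondary alphabets ($\t$ and $\z$ on one side, $\t$ and $\y$ on the other), while the $\x$-rapidities live entirely inside the separate regions that produce the Grothendieck polynomials; the pushforward $\left<\cdot\right>$ is never realized as a lattice sum anywhere in the argument. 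The paper instead proves the polynomial identity $G_\lambda(\x;\z)G_\mu(\x;\y)=\sum_\nu[\text{puzzle}]\,G_\nu(\x;\y)$ directly and reads off the coefficients by uniqueness of the expansion, so passing to the pairing is a detour that leaves the actual computation untouched.

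The specialization you describe is also not the one that yields the theorem. One must set \emph{two} of the four boundary partitions to $\varnothing$ (namely $\tilde\lambda=\mu=\varnothing$) and, contrary to your claim that no identification of spectral parameters is made, one must set $\t=\yy$: this turns the right-hand lozenge of \eqref{eq:master} into a single-alphabet puzzle to which the previously established identity \eqref{eq:preMSalt} (the content of Theorem~\ref{thm:MSalt}) applies, collapsing the entire right-hand side to the closed form $G_{\lambda^\ast}(\x;\yy)\,G_{\tilde\mu{}^\ast}(\x;\z)$. Without that step the right-hand side remains an unevaluated sum over $\tilde\nu$ and no product rule emerges. The chain Lemma~\ref{lem:split} $\to$ identity \eqref{eq:id} $\to$ Theorem~\ref{thm:MSalt} $\to$ Theorem~\ref{thm:MS} is the real engine of the proof and is absent from your outline; likewise, the freezing of the outer regions and the control of the two-colour regions hold only for $n$ large enough, so the stability argument of Sect.~\ref{sec:stab} is still required to obtain the statement for all admissible $n$.
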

The geometric interpretation of this last result is more complicated, \cf\ \cite[Sect. 6]{KT} for the case
of cohomology.

\begin{exlink}{http://www.lpthe.jussieu.fr/~pzinn/puzzles/?height=2&width=3&y1=1,1&y2&y3=2,2&y4=3,1&y3comp&y4comp&K&equiv&mask=35&process}
$n=5$, $k=2$, coefficient of $G_{\stableau{\\\\}}(\x;\y)$ in $G_{\stableau{&&\\\\}}(\x;\z)G_{\stableau{&\\&\\}}(\x;\y)$. There are 5 puzzles:
\begin{center}
\def\size{5}\def\puzzlescale{0.4}
\equivdoublepuzzle[\node at (-\size*0.65,\size*0.9) {$\sss-\frac{y_2^2 y_3^2}{z_2 z_3 z_4 z_5} \left(1-\frac{y_4}{z_5}\right)$};]{{4,{0,0},0},{4,{0,0},1},{4,{0,1},0},{4,{0,1},1},{1,{0,2},0},{1,{0,2},1},{1,{0,3},0},{1,{0,3},1},{1,{0,4},0},{1,{0,4},1},{4,{1,0},0},{4,{1,0},1},{4,{1,1},0},{4,{1,1},1},{1,{1,2},0},{1,{1,2},1},{1,{1,3},0},{3,{1,3},1},{7,{1,4},0},{7,{1,4},1},{0,{2,0},0},{0,{2,0},1},{0,{2,1},0},{2,{2,1},1},{2,{2,2},0},{0,{2,2},1},{3,{2,3},0},{1,{2,3},1},{1,{2,4},0},{3,{2,4},1},{0,{3,0},0},{2,{3,0},1},{2,{3,1},0},{0,{3,1},1},{0,{3,2},0},{2,{3,2},1},{2,{3,3},0},{2,{3,3},1},{8,{3,4},0},{4,{3,4},1},{1,{4,0},0},{1,{4,0},1},{4,{4,1},0},{4,{4,1},1},{1,{4,2},0},{1,{4,2},1},{1,{4,3},0},{1,{4,3},1},{4,{4,4},0},{4,{4,4},1}}\quad
\equivdoublepuzzle[\node at (-\size*0.65,\size*0.9) {$\sss-\frac{y_2^2 y_3}{z_2 z_4 z_5} \left(1-\frac{y_2}{z_3}\right)$};]{{4,{0,0},0},{4,{0,0},1},{4,{0,1},0},{4,{0,1},1},{1,{0,2},0},{1,{0,2},1},{1,{0,3},0},{1,{0,3},1},{1,{0,4},0},{1,{0,4},1},{4,{1,0},0},{4,{1,0},1},{4,{1,1},0},{4,{1,1},1},{1,{1,2},0},{1,{1,2},1},{1,{1,3},0},{1,{1,3},1},{1,{1,4},0},{3,{1,4},1},{0,{2,0},0},{0,{2,0},1},{0,{2,1},0},{2,{2,1},1},{2,{2,2},0},{2,{2,2},1},{2,{2,3},0},{0,{2,3},1},{3,{2,4},0},{3,{2,4},1},{0,{3,0},0},{2,{3,0},1},{2,{3,1},0},{0,{3,1},1},{7,{3,2},0},{7,{3,2},1},{0,{3,3},0},{2,{3,3},1},{8,{3,4},0},{4,{3,4},1},{1,{4,0},0},{1,{4,0},1},{4,{4,1},0},{4,{4,1},1},{1,{4,2},0},{1,{4,2},1},{1,{4,3},0},{1,{4,3},1},{4,{4,4},0},{4,{4,4},1}}\quad
\equivdoublepuzzle[\node at (-\size*0.65,\size*0.9) {$\sss-\frac{y_2^2 y_3}{z_2 z_3 z_5} \left(1-\frac{y_3}{z_4}\right)$};]{{4,{0,0},0},{4,{0,0},1},{4,{0,1},0},{4,{0,1},1},{1,{0,2},0},{1,{0,2},1},{1,{0,3},0},{1,{0,3},1},{1,{0,4},0},{1,{0,4},1},{4,{1,0},0},{4,{1,0},1},{4,{1,1},0},{4,{1,1},1},{1,{1,2},0},{1,{1,2},1},{1,{1,3},0},{1,{1,3},1},{1,{1,4},0},{3,{1,4},1},{0,{2,0},0},{0,{2,0},1},{0,{2,1},0},{2,{2,1},1},{2,{2,2},0},{0,{2,2},1},{7,{2,3},0},{7,{2,3},1},{3,{2,4},0},{3,{2,4},1},{0,{3,0},0},{2,{3,0},1},{2,{3,1},0},{0,{3,1},1},{0,{3,2},0},{2,{3,2},1},{2,{3,3},0},{2,{3,3},1},{8,{3,4},0},{4,{3,4},1},{1,{4,0},0},{1,{4,0},1},{4,{4,1},0},{4,{4,1},1},{1,{4,2},0},{1,{4,2},1},{1,{4,3},0},{1,{4,3},1},{4,{4,4},0},{4,{4,4},1}}\\
\equivdoublepuzzle[\node at (-\size*0.8,\size*1.05) {$\sss\frac{y_2^2 y_3}{z_2 z_3 z_5} \left(1-\frac{y_4}{z_4}\right) \left(1-\frac{y_4}{z_5}\right)$};]{{4,{0,0},0},{4,{0,0},1},{4,{0,1},0},{4,{0,1},1},{1,{0,2},0},{1,{0,2},1},{1,{0,3},0},{1,{0,3},1},{1,{0,4},0},{1,{0,4},1},{4,{1,0},0},{4,{1,0},1},{4,{1,1},0},{4,{1,1},1},{1,{1,2},0},{3,{1,2},1},{7,{1,3},0},{7,{1,3},1},{7,{1,4},0},{7,{1,4},1},{0,{2,0},0},{0,{2,0},1},{0,{2,1},0},{2,{2,1},1},{8,{2,2},0},{4,{2,2},1},{1,{2,3},0},{1,{2,3},1},{1,{2,4},0},{3,{2,4},1},{0,{3,0},0},{2,{3,0},1},{2,{3,1},0},{0,{3,1},1},{0,{3,2},0},{2,{3,2},1},{2,{3,3},0},{2,{3,3},1},{8,{3,4},0},{4,{3,4},1},{1,{4,0},0},{1,{4,0},1},{4,{4,1},0},{4,{4,1},1},{1,{4,2},0},{1,{4,2},1},{1,{4,3},0},{1,{4,3},1},{4,{4,4},0},{4,{4,4},1}}\qquad
\equivdoublepuzzle[\node at (-\size*0.8,\size*1.05) {$\sss \frac{y_2^2 y_3}{z_2 z_4 z_5} \left(1-\frac{y_2}{z_3}\right)\left(1-\frac{y_4}{z_5}\right)$};]{{4,{0,0},0},{4,{0,0},1},{4,{0,1},0},{4,{0,1},1},{1,{0,2},0},{1,{0,2},1},{1,{0,3},0},{1,{0,3},1},{1,{0,4},0},{1,{0,4},1},{4,{1,0},0},{4,{1,0},1},{4,{1,1},0},{4,{1,1},1},{1,{1,2},0},{1,{1,2},1},{1,{1,3},0},{3,{1,3},1},{7,{1,4},0},{7,{1,4},1},{0,{2,0},0},{0,{2,0},1},{0,{2,1},0},{2,{2,1},1},{2,{2,2},0},{2,{2,2},1},{8,{2,3},0},{4,{2,3},1},{1,{2,4},0},{3,{2,4},1},{0,{3,0},0},{2,{3,0},1},{2,{3,1},0},{0,{3,1},1},{7,{3,2},0},{7,{3,2},1},{0,{3,3},0},{2,{3,3},1},{8,{3,4},0},{4,{3,4},1},{1,{4,0},0},{1,{4,0},1},{4,{4,1},0},{4,{4,1},1},{1,{4,2},0},{1,{4,2},1},{1,{4,3},0},{1,{4,3},1},{4,{4,4},0},{4,{4,4},1}}
\end{center}
Summing these contributions produces
%\begin{align*}
%\big\langle
%G_{\stableau{&&\\\\}}G_{\stableau{&\\&\\}}G^{\stableau{\\\\}}
%\big\rangle
%=
the correct coefficient $\frac{y_2^2 y_3 y_4}{z_2 z_3 z_4 z_5^2} \left(y_2+y_3+y_4-z_3-z_4-z_5\right)$.
\end{exlink}

\subsection{Variations of equivariant theorems}
Finally, we present a series of ``variations'' on these results. In principle, they can be obtained
from the theorems above by applying duality arguments, but for pedagogical reasons (and because
they will appear as necessary intermediate results), we shall
derive them using integrability only. They involve appropriate modifications of the weights:
\begin{equation}\label{eq:modwei}
\vcenter{\hbox{\begin{tikzpicture}
\matrix[column sep=0.4cm,row sep=0.2cm]
{
\dtile{{2,{0,0},0},{2,{0,0},1}}
&
\dtile{{0,{0,0},0},{2,{0,0},1}}
&
\dtile{{2,{0,0},0},{0,{0,0},1}}
&
\dtile{{0,{0,0},0},{0,{0,0},1}}
&
\dtile{{7,{0,0},0},{7,{0,0},1}}
&
\dtile{{1,{0,0},0},{1,{0,0},1}}
&
\dtile{{4,{0,0},0},{4,{0,0},1}}
&
\dtile{{3,{0,0},0},{1,{0,0},1}}
&
\dtile{{1,{0,0},0},{3,{0,0},1}}
&
\dtile{{3,{0,0},0},{3,{0,0},1}}
&
\dtile{{8,{0,0},0},{4,{0,0},1}}
\\
\node{$1$};
&
\node{$1$};
&
\node{$w$};
&
\node{$1$};
&
\node{$1-w$};
&
\node{$1$};
&
\node{$1$};
&
\node{$1$};
&
\node{$w$};
&
\node{$1$};
&
\node{$-1$};
\\
};
\end{tikzpicture}}}
\end{equation}

\begin{thm}\label{thm:equivalt}
Expand
\begin{align*}
G^\lambda(\x;\y)
G^\mu(\x;\yy)
=
\sum_\nu
d^{\lambda,\mu}_\nu(\y)
G^\nu(\x;\yy).
\end{align*}
Then $d^{\lambda,\mu}_\nu(\y)$ is the sum of modified weights of equivariant puzzles
in an up-pointing triangle with weights given in terms of $w=y_i/y_j$ and boundary conditions:
\begin{center}
\def\size{3}
\begin{tikzpicture}[puz]
\draw (0,0) -- (\size,0) -- (0,\size) -- cycle;
\draw (0.5,0.5) -- (0.5,1) -- (1,1) -- (1,0.5) -- cycle;
\node[below=-0.5mm] at (0,3) {$\ss 1$};
\node[below=-0.5mm] at (0.35,2.65) {$\ss \leq$};
\node[below=-0.5mm] at (1.5,1.5) {$\ss <$};
\node[below=-0.5mm] at (2.65,0.35) {$\ss \leq$};
\node[below=-0.1mm] at (3,0) {$\ss n$};
\draw[dashed] (0.75,1) -- (0.75,2.25) node[below=-0.5mm] {$\ss j$};
\draw[dashed] (1,0.75) -- (2.25,0.75) node[below=-0.5mm] {$\ss i$};
\draw[<-] (2.75,-0.75) -- (1.25,-0.75);
\node at (2.25,-1.25) {$\ss\lambda$};
\draw[->] (2.55,1.3) -- (1.3,2.55);
\node at (2.2,2.2) {$\ss\mu$};
\draw[->] (-0.75,1.25) -- (-0.75,2.75);
\node at (-1.25,2.25) {$\ss\nu$};
\end{tikzpicture}
\end{center}
\end{thm}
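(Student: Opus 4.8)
The plan is to obtain Theorem~\ref{thm:equivalt} as a specialization of the master equation \eqref{eq:master}, in the same spirit as Theorem~\ref{thm:equiv}, but with the secondary alphabet of the second factor reversed. The starting observation is that, by the duality $\left<G^\lambda G_\mu\right>=\delta^\lambda_\mu$ (which is invariant under permuting the $\y$, hence also holds with $\yy$ throughout), the sought coefficient is the triple pairing
\begin{align*}
d^{\lambda,\mu}_\nu(\y)=\left<G^\lambda(\x;\y)\,G^\mu(\x;\yy)\,G_\nu(\x;\yy)\right>.
\end{align*}
Realising each of the three factors as a partition function of the five-vertex model of step~{\bf 1} and gluing them along their common $\x$-lines, $d^{\lambda,\mu}_\nu(\y)$ becomes the partition function of the combined model on an up-pointing triangular domain whose three sides carry the boundary data of $\lambda$, $\mu$ and $\nu$; evaluating this partition function via Yang--Baxter is what produces the puzzle sum.

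The essential new feature, compared with Theorem~\ref{thm:equiv}, is the reversal $\y\mapsto\yy$ in the factors $G^\mu$ and $G_\nu$. At the level of the vertex model this is exactly a reflection that reverses the order of the vertical (equivariant) spectral parameters along those rows. Concretely, I would start from \eqref{eq:master}, set the auxiliary partitions bordering the ``lozenge'' region to $\varnothing$ so that the lozenge degenerates to an up-pointing triangle and the accompanying dual-polynomial region freezes out (the freezing argument of step~{\bf 3}), and read off the surviving identity $G^\lambda(\x;\y)G^\mu(\x;\yy)=\sum_\nu d^{\lambda,\mu}_\nu(\y)G^\nu(\x;\yy)$. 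I would first check that the reflection is compatible with the rank-two embedding of step~{\bf 2}, i.e. that the eleven-vertex $R$-matrix correctly intertwines the reversed configuration. It is precisely this reflection that flips the orientations of the two sides carrying the reversed-alphabet partitions $\mu$ and $\nu$, while the side carrying the forward-alphabet $\lambda$ retains its orientation, reproducing the boundary conditions drawn in the statement.

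It then remains to identify the local weights. Each elementary lozenge of the triangular puzzle is weighted by the corresponding entry of the rank-two $R$-matrix evaluated at the ratio $w=y_i/y_j$ dictated by its position $(i,j)$. Because of the alphabet reversal, the ratio attached to a given vertex is transposed relative to the computation underlying \eqref{eq:wei}; tracking this through the $R$-matrix shows that the $w$-dependence migrates from the equivariant tile \dtile{{3,{0,0},0},{1,{0,0},1}} to the tile \dtile{{1,{0,0},0},{3,{0,0},1}}, and that the $K$-tile \dtile{{8,{0,0},0},{4,{0,0},1}} loses its $w$ prefactor, so that \eqref{eq:wei} is replaced by the modified table \eqref{eq:modwei}. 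I would verify this tile-by-tile against the explicit $R$-matrix weights of Section~\ref{sec:model}. As an independent cross-check, the same identity should follow from Theorem~\ref{thm:equiv} by the duality argument alluded to in the text, which must land on the same modified weights.

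The main obstacle, I expect, is exactly this bookkeeping of the reversed secondary alphabet: one must show that the reflection of the lattice lands on precisely the modified weights \eqref{eq:modwei} and the reoriented boundary, and not on some other reflected variant, and that the freezing argument collapsing the hexagonal master domain of \eqref{schem-master} to the desired up-pointing triangle survives the reversal of parameters. A secondary point to confirm is that the resulting expansion is finite and well-defined in $K_T(Gr(k,n))$; this follows once the puzzle description is in place, since each puzzle is a finite combinatorial object, together with the stability property deferred to Section~\ref{sec:stab}.
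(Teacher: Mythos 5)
Your overall route---specialize the master identity \eqref{eq:master} and interpret $d^{\lambda,\mu}_\nu(\y)$ as the triple pairing $\left<G^\lambda(\x;\y)\,G^\mu(\x;\yy)\,G_\nu(\x;\yy)\right>$---is the same as the paper's, and you land on the correct weight table \eqref{eq:modwei}. However, the mechanism you describe for collapsing to an up-pointing triangle is not the one that works, and the step it replaces is the actual content of the proof. The relevant specialization is $\lambda=\mu=\varnothing$, $\t=\y$; after Lemma~\ref{lem:split} and the identity \eqref{eq:id} this produces the \emph{lozenge} identity \eqref{eq:preequivalt}, an expansion of $G^\lambda(\x;\y)G_\mu(\x;\z)$ over $G^\nu(\x;\zz)$. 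At this point nothing has ``degenerated to a triangle'': the dual-polynomial region does not freeze out, and the reversed alphabet $\zz$ is not installed by any lattice reflection---it appears automatically from the relation $G_\nu(\x;\z)=\prod_i x_i\prod_{i\in\nu}z_i^{-1}\,G^{\nu^\ast}(\x;\zz)$ that dualizes the boundary data. To pass from this lozenge to the triangle of the theorem one must set $\z=\y$, pair against $G^\kappa(\x;\yy)$, split the lozenge into an up- and a down-triangle via Lemma~\ref{lem:split}(1), and then \emph{resum} the down-triangle over the intermediate basis label using Lemma~\ref{lem:split}(3), \eqref{eq:id} and $\left<G_\rho(\x;\yy)G^\kappa(\x;\yy)\right>=\delta_\rho^\kappa$. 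The bottom half is not frozen for a fixed intermediate partition in any useful sense; it is this resummation, absent from your plan, that kills the lower triangle and leaves the up-pointing one.

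A second, smaller inaccuracy: the modified weights \eqref{eq:modwei} do not arise by ``transposing'' the ratio $w$ under a reflection of the rows carrying $\yy$. They arise from absorbing the explicit monomial prefactors $\prod_{i\in\kappa^\ast}y_i\prod_{i\in\nu^\ast}y_i^{-1}$ left over by the computation above, through a gauge transformation on edge states (equivalently, \eqref{eq:modwei} is the third row of table \eqref{eq:table}). You assert the correct final table, but the derivation you sketch---tile-by-tile comparison after a reflection of the rank-two lattice---is not set up in a way that would produce it, and it is not clear such a reflection is even compatible with the three distinct $R$-matrices of \eqref{rank2yb}, whose weights are not $180$-degree rotation invariant. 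So: right destination and right general strategy, but the key collapsing argument and the origin of both the reversed alphabet and the modified weights need to be replaced by the chain \eqref{eq:preequivalt} $\to$ duality pairing $\to$ \eqref{eq:id} $\to$ gauge conjugation.
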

%\rem{it's very tempting to rotate the picture 120 degrees... then we should get back to previous weights}

\begin{exlink}{http://www.lpthe.jussieu.fr/~pzinn/puzzles/?height=2&width=2&y1=1&y3=1&y2comp&K&equiv&mask=35&process}
$n=4$, $k=2$, expansion of $G^{\stableau{\\}}(\x;\y)G^{\stableau{\\}}(\x;\yy)$. We find the following puzzles:

\begin{center}
\def\size{4}
\equivpuzzle[\node at (-\size*0.35,\size*0.6) {$\ss \frac{y_4}{y_2}\left(1-\frac{y_2}{y_1}\right)$};]{{0,{0,0},0},{2,{0,0},1},{2,{0,1},0},{0,{0,1},1},{0,{0,2},0},{2,{0,2},1},{2,{0,3},0},{1,{1,0},0},{1,{1,0},1},{4,{1,1},0},{4,{1,1},1},{1,{1,2},0},{7,{2,0},0},{7,{2,0},1},{0,{2,1},0},{1,{3,0},0}}\qquad
\equivpuzzle[\node at (-\size*0.35,\size*0.6) {$\ss 1-\frac{y_4}{y_2}$};]{{0,{0,0},0},{0,{0,0},1},{7,{0,1},0},{7,{0,1},1},{0,{0,2},0},{2,{0,2},1},{2,{0,3},0},{4,{1,0},0},{4,{1,0},1},{1,{1,1},0},{1,{1,1},1},{1,{1,2},0},{0,{2,0},0},{2,{2,0},1},{2,{2,1},0},{1,{3,0},0}}\qquad
\equivpuzzle[\node at (-\size*0.35,\size*0.6) {$\ss \frac{y_4}{y_1}$};]{{0,{0,0},0},{2,{0,0},1},{2,{0,1},0},{0,{0,1},1},{0,{0,2},0},{2,{0,2},1},{2,{0,3},0},{1,{1,0},0},{1,{1,0},1},{4,{1,1},0},{4,{1,1},1},{1,{1,2},0},{1,{2,0},0},{3,{2,0},1},{0,{2,1},0},{3,{3,0},0}}\\[2mm]
\equivpuzzle[\node at (-\size*0.35,\size*0.6) {$\ss -\frac{y_4}{y_2}\left(1-\frac{y_2}{y_1}\right)$};]{{4,{0,0},0},{4,{0,0},1},{1,{0,1},0},{3,{0,1},1},{0,{0,2},0},{2,{0,2},1},{2,{0,3},0},{0,{1,0},0},{2,{1,0},1},{8,{1,1},0},{4,{1,1},1},{1,{1,2},0},{7,{2,0},0},{7,{2,0},1},{0,{2,1},0},{1,{3,0},0}}\qquad
\equivpuzzle[\node at (-\size*0.35,\size*0.6) {$\ss \frac{y_4}{y_2}$};]{{4,{0,0},0},{4,{0,0},1},{1,{0,1},0},{3,{0,1},1},{0,{0,2},0},{2,{0,2},1},{2,{0,3},0},{0,{1,0},0},{0,{1,0},1},{3,{1,1},0},{1,{1,1},1},{1,{1,2},0},{0,{2,0},0},{2,{2,0},1},{2,{2,1},0},{1,{3,0},0}}\qquad
\equivpuzzle[\node at (-\size*0.35,\size*0.6) {$\ss -\frac{y_4}{y_1}$};]{{4,{0,0},0},{4,{0,0},1},{1,{0,1},0},{3,{0,1},1},{0,{0,2},0},{2,{0,2},1},{2,{0,3},0},{0,{1,0},0},{2,{1,0},1},{8,{1,1},0},{4,{1,1},1},{1,{1,2},0},{1,{2,0},0},{3,{2,0},1},{0,{2,1},0},{3,{3,0},0}}
\end{center}
which matches with
\begin{align*}
G^{\stableau{\\}}(\x;\y)G^{\stableau{\\}}(\x;\yy)
=
\left(1-\frac{y_4}{y_1}\right)
G^{\stableau{\\}}(\x;\yy)
+\frac{y_4}{y_1}
G^{\stableau{&\\}}(\x;\yy)
+\frac{y_4}{y_1}
G^{\stableau{\\\\}}(\x;\yy)
-\frac{y_4}{y_1}
G^{\stableau{&\\\\}}(\x;\yy).
\end{align*}
\end{exlink}

\begin{thmd}\label{thm:equivdualalt}
Expand
\begin{align*}
G_\lambda(\x;\y)
G_\mu(\x;\yy)
=
\sum_\nu
d_{\lambda,\mu}^\nu(\y)
G_\nu(\x;\yy)
\end{align*}
Then $d_{\lambda,\mu}^\nu(\y)$ is the sum of modified weights of equivariant puzzles
in a down-pointing triangle with weights given in terms of $w=y_i/y_j$ and boundary conditions:
\begin{center}
\def\size{3}
\begin{tikzpicture}[puz,scale=-1]
\draw (0,0) -- (\size,0) -- (0,\size) -- cycle;
\draw (0.5,0.5) -- (0.5,1) -- (1,1) -- (1,0.5) -- cycle;
\node[above=-0.1mm] at (0,3) {$\ss n$};
\node[above=-0.5mm] at (0.35,2.65) {$\ss \leq$};
\node[above=-0.5mm] at (1.5,1.5) {$\ss <$};
\node[above=-0.5mm] at (2.65,0.35) {$\ss \leq$};
\node[above=-0.25mm] at (3,0) {$\ss 1$};
\draw[dashed] (0.75,1) -- (0.75,2.25) node[above=-0.5mm] {$\ss j$};
\draw[dashed] (1,0.75) -- (2.25,0.75) node[above=-0.5mm] {$\ss i$};
\draw[->] (2.75,-0.75) -- (1.25,-0.75);
\node at (2.25,-1.25) {$\ss\lambda$};
\draw[<-] (2.55,1.3) -- (1.3,2.55);
\node at (2.2,2.2) {$\ss\mu$};
\draw[<-] (-0.75,1.25) -- (-0.75,2.75);
\node at (-1.25,2.25) {$\ss\nu$};
\end{tikzpicture}
\end{center}
\end{thmd}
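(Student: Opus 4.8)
The plan is to obtain Theorem~\ref{thm:equivdualalt} as a specialization of the master equation~\eqref{eq:master}, in close parallel with the dual equivariant rule of Theorem~\ref{thm:equivdual} (which already lives on a down-pointing triangle) and with the non-dual ``variation'' of Theorem~\ref{thm:equivalt} (which already uses the modified weights~\eqref{eq:modwei}). The down-pointing domain is forced by the nature of dual polynomials: from the defining relation $G_\lambda(\x;\y)=\prod_{i=1}^k x_i\prod_{i\in\lambda}y_i^{-1}\,G^{\lambda^\ast}(\x;\yy)$, a dual Grothendieck polynomial is the partition function read on the reflected half of the lattice with its secondary alphabet reversed, so a product of two dual polynomials is naturally computed on the reflected (down-pointing) triangle. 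The appearance of the two distinct alphabets $\y$ and $\yy$ in the statement is the hallmark of this reversal.

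Concretely, I would look for the specialization of the four partitions $\lambda,\mu,\tilde\lambda,\tilde\mu$ and of the two independent families of spectral parameters in~\eqref{eq:master} for which one side degenerates, after the frozen (grey) regions are stripped away, to the single product $G_\lambda(\x;\y)\,G_\mu(\x;\yy)$. Guided by the two alphabets, the crucial choice---distinguishing this case from Theorems~\ref{thm:equiv} and~\ref{thm:equivdual}, where the two families are identified---is to set one family to $\y$ and the other to its reversal $\yy$, so that the surviving factor $G_\lambda$ carries $\y$ while $G_\mu$ and the expansion polynomials $G_\nu$ carry $\yy$. Simultaneously, the partitions bordering the frozen lozenge are specialized (some to $\varnothing$) so that the lozenge coefficient degenerates to a Kronecker delta, the hexagon of~\eqref{eq:master} collapses to a single triangle, and one of the two summations collapses to a single term.

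With this in hand I would read off the two evaluations of the resulting triangular partition function: on one side the frozen regions recombine to $G_\lambda(\x;\y)\,G_\mu(\x;\yy)$, while on the other the sum over internal lattice states regroups, according to the state $\nu$ of the left boundary edge, into $\sum_\nu(\text{weighted state count})\,G_\nu(\x;\yy)$. The admissible states are exactly the fillings of the down-pointing triangle whose three sides carry $\lambda,\mu,\nu$ with the orientations drawn in the statement; gluing the triangles that share a horizontal edge into elementary lozenges then converts the local vertex weights of the eleven-vertex model into lozenge weights, yielding the claimed puzzle description of $d_{\lambda,\mu}^\nu(\y)$.

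The step demanding genuine care---and the principal obstacle---is to confirm that this reversed specialization produces the \emph{modified} weight table~\eqref{eq:modwei} rather than the table~\eqref{eq:wei} of the earlier equivariant theorems. Reversing one alphabet changes which entry of the degenerate $R$-matrix is attached to an elementary lozenge sitting at position $(i,j)$, and I would verify that this reassignment moves the factor $w=y_i/y_j$ from the first equivariant tile to the second and renormalizes the $K$-tile weight from $-w$ to $-1$, matching~\eqref{eq:modwei} entry by entry, while the coordinate rule $w=y_i/y_j$ is read off correctly from the down-pointing geometry. One could in principle instead deduce the result from Theorem~\ref{thm:equivalt} through the duality $G_\lambda\leftrightarrow G^{\lambda^\ast}$; but that substitution produces an expansion in the basis $\{G_{\gamma^\ast}(\x;\y)\}$ rather than in $\{G_\nu(\x;\yy)\}$, so a further nontrivial change of basis would be needed---which is exactly why the direct integrable derivation is preferable.
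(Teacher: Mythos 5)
Your overall strategy---specialize the master identity \eqref{eq:master} with the two spectral-parameter families identified up to reversal, invoke the splitting lemma to collapse the hexagon, and read off a down-pointing triangle with modified weights---is indeed the paper's strategy. But one step of your plan, as written, would fail. You assert that after setting some boundary partitions to $\varnothing$ ``the lozenge coefficient degenerates to a Kronecker delta'' and ``one of the two summations collapses to a single term.'' In $K$-theory this is false, and it is exactly the point where an extra ingredient is needed. When the two alphabets of a lozenge coincide, Lemma~\ref{lem:split} splits it into a top and a bottom triangle: an empty partition on a \emph{top} edge does freeze the top half with weight $1$ (part (2)), but an empty partition on a \emph{bottom} edge only restricts the bottom half to the terms with $\sigma^\ast\vartriangleright\nu$, each contributing a sign and a monomial (part (3)), because the $K$-tile lets green particles hop. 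Consequently the side of \eqref{eq:master} that is supposed to become the fixed product of two dual Grothendieck polynomials is, after splitting, still an alternating sum over all $\nu$ with $\nu^\ast\vartriangleright\lambda$. Turning that alternating sum into a single polynomial requires the identity \eqref{eq:id}, $\sum_{\nu\,\vartriangleright\,\mu}(-1)^{|\nu|-|\mu|}G^{\nu}(\x;\z)=G_{\mu^\ast}(\x;\zz)$, which the paper must establish separately (by yet another specialization, $\lambda=\varnothing$ and $\y=\z$, of the same master identity). Your proposal has no counterpart of this step; the intuition that the sum collapses to one term is correct in cohomology but not here.

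On the route itself: the paper does not jump straight to the triangle. It first proves the lozenge rule of Theorem~\ref{thm:MSalt} with two genuinely independent alphabets $\z$ and $\y$---and it is at that stage, not through any reversal of an alphabet, that the modified weights \eqref{eq:modwei} appear: the prefactors $\prod_{i\in\mu}y_i^{-1}$ and $\prod_{i\in\tilde\nu}y_i$ left over from lowering indices are absorbed by a conjugation assigning extra edge weights, which turns \eqref{eq:wei} into \eqref{eq:modwei}. Only then does one set $\y=\zz$, so that Lemma~\ref{lem:split}(2) freezes the top half of the lozenge with weight $1$ and leaves precisely the down-pointing triangle of Theorem~\ref{thm:equivdualalt}. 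So the step you flag as the ``principal obstacle''---tracking the weights---is handled by a bookkeeping conjugation already performed at the lozenge stage; the genuinely missing mathematics in your plan is the recombination identity \eqref{eq:id}.
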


We also have an alternative expression for the same coefficients 
$c_{\lambda,\mu}^\nu(\z;\y)$ introduced in Theorem~\ref{thm:MS}:
\begin{thmdd}\label{thm:MSalt}
% Write
% \begin{align*}
% G_\lambda(\x;\z)G_\mu(\x;\y)=\sum_\nu c_{\lambda,\mu}^{\nu}(\y;\z) G_\nu(\x;\y)
% \end{align*}
%Given three Young diagrams $\lambda$, $\mu$, $\nu$ inside the $k\times(n-k)$ rectangle,
$c_{\lambda,\mu}^\nu(\z;\y)$ is also the sum of modified weights of equivariant puzzles in a lozenge,
with sides labelled by Young diagrams $\lambda,\mu,\varnothing,\nu$ as indicated on the diagram, and the parameter $w$
in the weights being given by $w=y_i/z_j$ for an elementary lozenge at location $(i,j)$:
\begin{center}
\def\size{3}
\begin{tikzpicture}[puz]
\draw (0,0) -- (\size,0) -- (\size,\size) -- (0,\size) -- cycle;
\draw (0.5,0.5) -- (0.5,1) -- (1,1) -- (1,0.5) -- cycle;
\node[rotate=-60] at (0,3.3) {$\ss 1\leq$};
\node[rotate=-60] at (2.7,3.3) {$\ss \leq n$};
\draw[dashed] (0.75,1) -- (0.75,3) node[rotate=-60,below=-1mm] {$\ss j$};
\node[rotate=60] at (3.3,0) {$\ss \geq 1$};
\node[rotate=60] at (3.3,2.7) {$\ss n\geq$};
\draw[dashed] (1,0.75) -- (3,0.75) node[rotate=60,below=-1mm] {$\ss i$};
\draw[<-] (2.75,-0.75) -- (1.25,-0.75);
\node at (2.25,-1.25) {$\ss\varnothing$};
\draw[->] (3.75,0.5) -- (3.75,2);
\node at (4.25,1) {$\ss\nu$};
\draw[->] (0.5,3.75) -- (2,3.75);
\node at (1,4.25) {$\ss\lambda$};
\draw[->] (-0.75,1.25) -- (-0.75,2.75);
\node at (-1.25,2.25) {$\ss\mu$};
\end{tikzpicture}
\end{center}
\end{thmdd}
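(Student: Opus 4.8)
The plan is to derive Theorem~\ref{thm:MSalt} as a second, independent evaluation of the \emph{same} coefficient $c_{\lambda,\mu}^\nu(\z;\y)$ appearing in Theorem~\ref{thm:MS}, rather than by matching the two puzzle families against each other. The key observation is that $c_{\lambda,\mu}^\nu(\z;\y)$ is, by definition, the unique coefficient of $G_\nu(\x;\y)$ in the expansion of $G_\lambda(\x;\z)\,G_\mu(\x;\y)$, the $G_\nu(\x;\y)$ forming a basis; hence any integrable computation of this coefficient is forced to return the same rational function of $\z$ and $\y$. It therefore suffices to produce \emph{one} partition-function formula for $c_{\lambda,\mu}^\nu(\z;\y)$ carrying the modified local weights~\eqref{eq:modwei}, after which its agreement with the standard-weight formula of Theorem~\ref{thm:MS} is automatic, and no explicit bijection between standard and modified puzzles need be constructed.

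Concretely, I would revisit the specialization of the master equation~\eqref{eq:master} that yields Theorem~\ref{thm:equivdualalt}, since that is precisely the reading which assigns the modified weights~\eqref{eq:modwei} to the elementary lozenges, but now \emph{without} identifying the two sets of vertical spectral parameters: the lines feeding the $G_\lambda$ region carry the alphabet $\z$, while those feeding the $G_\mu$ and $G_\nu$ regions carry $\y$. Exactly as in the passage from Theorem~\ref{thm:equivdual} to Theorem~\ref{thm:MS}, keeping the two alphabets distinct prevents the frozen (grey) regions from collapsing the domain to a triangle; they instead leave a single nontrivial region, the lozenge bordered by $\lambda,\mu,\varnothing,\nu$ in the orientation drawn. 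Because the alphabets are unequal, the ratio attached to an elementary lozenge straddling the $\z$-line $j$ and the $\y$-line $i$ becomes the genuinely inhomogeneous $w=y_i/z_j$, and the ranges of $i$ and $j$, together with the orientation of $i$ reversed relative to Theorem~\ref{thm:MS}, are dictated by how the $\z$- and $\y$-lines cross the lozenge.

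The step I expect to be the main obstacle is the precise bookkeeping of these local weights: one must verify that the alt reading assigns exactly the entries of~\eqref{eq:modwei} --- in particular the exchange $w\leftrightarrow 1$ between the two equivariant tiles and the replacement of the $K$-tile weight $-w$ by $-1$ --- uniformly with $w=y_i/z_j$ and with the stated coordinate ranges. Since~\eqref{eq:modwei} differs from~\eqref{eq:wei} only by a position-dependent rescaling on these three tiles, the delicate point is to check that the accumulated rescaling factors telescope into the overall normalization already carried by $G_\nu(\x;\y)$, rather than contributing a spurious monomial in $\z,\y$. I would confirm the result by two checks: specializing the alphabets so that the lozenge freezes to a down-pointing triangle should recover Theorem~\ref{thm:equivdualalt}, the modified-weight analogue of the degeneration of Theorem~\ref{thm:MS} to Theorem~\ref{thm:equivdual}; and the duality argument alluded to in the text --- reflecting the standard-weight lozenges of Theorem~\ref{thm:MS} and matching weights tile by tile against~\eqref{eq:modwei} --- provides an independent verification that the two sums coincide.
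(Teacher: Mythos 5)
Your overall route is the paper's route: specialize the master identity \eqref{eq:master} so that one of the two hexagon halves keeps the distinct alphabets $\z$ and $\y$ (producing the lozenge puzzle of the statement) while the other half, whose alphabets coincide, collapses via Lemma~\ref{lem:split}; then absorb the leftover normalization into a local gauge transformation of the tile weights, which is exactly how the paper turns \eqref{eq:wei} into \eqref{eq:modwei} (an extra $y_i^{\pm1}$ on NW/SE empty edges and $z_j^{\pm1}$ on NE/SW doubly-occupied edges). You are also right that uniqueness of the expansion in the basis $\{G_\nu(\x;\y)\}$ makes any such formula automatically equal to the $c_{\lambda,\mu}^\nu(\z;\y)$ of Theorem~\ref{thm:MS}, and your ``check'' that setting $\y=\zz$ recovers Theorem~\ref{thm:equivdualalt} is precisely how the paper derives that theorem (note the order is reversed relative to your framing: Theorem~\ref{thm:MSalt} is proved first, so ``revisiting the derivation of \ref{thm:equivdualalt} with the alphabets kept distinct'' would be circular as literally stated, though the computation is the same).

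The genuine gap is on the collapsed side. Lemma~\ref{lem:split}(3) does not freeze that half of the identity to a single product $G_\lambda(\x;\z)G_\mu(\x;\y)$: it reduces the equal-alphabet lozenge to an indicator of $\nu^\ast\vartriangleright\lambda$ times a sign and a monomial, so what you actually obtain is an alternating sum $\sum_{\nu:\,\nu^\ast\vartriangleright\lambda}(-1)^{|\nu^\ast|-|\lambda|}\prod_{i\in\nu}z_i\prod_{i=n-k+1}^n z_i^{-1}\,G_\nu(\x;\z)$ multiplying $G^\mu(\x;\y)$. To identify this with a single dual Grothendieck polynomial you need the resummation identity \eqref{eq:id}, $\sum_{\nu\vartriangleright\mu}(-1)^{|\nu|-|\mu|}G^\nu(\x;\z)=G_{\mu^\ast}(\x;\zz)$ (a $K$-theoretic Pieri-type identity, proved in the paper by a further sub-specialization $\lambda=\varnothing$, $\y=\z$ of the same master equation, where only two configurations survive). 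Without this step your partition function is not yet identified as computing the coefficients of $G_\lambda(\x;\z)G_\mu(\x;\y)$, so the ``uniqueness of coefficients'' argument has nothing to latch onto. By contrast, the weight bookkeeping you single out as the main obstacle is comparatively routine once the conjugation is set up, since the rescalings cancel on all internal edges and only contribute the boundary prefactors $\prod_{i\in\mu}y_i^{-1}$ and $\prod_{i\in\tilde\nu}y_i$ that are exactly accounted for by the passage from \eqref{eq:preMSalt} to the lowered-index form of the statement.
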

%\rem{thm 8 and 5 are of course equivalent by using dualities}

\subsection{On the order of proofs}
Theorems~\ref{thm:nonequiv} and \ref{thm:nonequivdual}
are clearly special cases of Theorems~\ref{thm:equiv} and \ref{thm:equivdual}, respectively (noting
that in the absence of equivariant tiles, the number of $K$-tiles is fixed to be $|\nu|-|\lambda|-|\mu|$, resp.\ 
$|\lambda|+|\mu|-|\nu|-k(n-k)$).
In what follows, we shall prove, in this order,
Theorems~\ref{thm:MSalt}, \ref{thm:equivdualalt}, \ref{thm:equivalt}, \ref{thm:MS}, \ref{thm:equivdual} and \ref{thm:equiv}.

\section{Definition of the integrable model}
\label{sec:model}

\subsection{Grothendieck polynomials from an integrable five-vertex model}
\label{ssec:5v}

Like many families of symmetric functions, Grothendieck polynomials can be expressed as partition functions of an integrable vertex model. In the case of ordinary Grothendieck polynomials (without the secondary alphabet $\y$), such a construction is given in \cite{ms1,ms2}. The usual method of deriving these expressions is to translate the branching formula for the symmetric function under consideration into the language of row-to-row transfer matrices. Here we give a construction for the double Grothendieck polynomials $G^{\lambda}(\x;\y)$ that only makes use of their inductive defintion \eqref{ind-0}--\eqref{ind-i} and the Yang--Baxter equation of the underlying vertex model.

Consider the following $R$-matrix: 
\begin{align}
\label{5vR}
R_{a,b}(z)
=
\left(
\begin{array}{cccc}
1 & 0 & 0 & 0
\\
0 & 0 & z & 0
\\
0 & 1 & 1-z & 0
\\
0 & 0 & 0 & 1
\end{array}
\right)_{a,b}
=
\left(
\begin{array}{cccc}
\begin{tikzpicture}[scale=0.5,baseline=0.125cm]
\filldraw[fill=c4!15!white,draw=black] (0,0) -- (1,0) -- (1,1) -- (0,1) -- (0,0);
\end{tikzpicture}
&
0
&
0
&
0
\\
0 
& 
0 
& 
\begin{tikzpicture}[scale=0.5,baseline=0.125cm]
\filldraw[fill=c4!15!white,draw=black] (0,0) -- (1,0) -- (1,1) -- (0,1) -- (0,0);
\draw[edge=green] (0.5,0) -- (1,0.5);
\node[vertex=green] at (0.5,0) {};
\node[vertex=green] at (1,0.5) {};
\end{tikzpicture}
&
0
\\
0 
&
\begin{tikzpicture}[scale=0.5,baseline=0.125cm]
\filldraw[fill=c4!15!white,draw=black] (0,0) -- (1,0) -- (1,1) -- (0,1) -- (0,0);
\draw[edge=green] (0,0.5) -- (0.5,1);
\node[vertex=green] at (0,0.5) {};
\node[vertex=green] at (0.5,1) {};
\end{tikzpicture}
&
\begin{tikzpicture}[scale=0.5,baseline=0.125cm]
\filldraw[fill=c4!15!white,draw=black] (0,0) -- (1,0) -- (1,1) -- (0,1) -- (0,0);
\draw[edge=green] (0,0.5) -- (1,0.5);
\node[vertex=green] at (0,0.5) {};
\node[vertex=green] at (1,0.5) {};
\end{tikzpicture}
&
0
\\
0 
&
0
&
0
&
\begin{tikzpicture}[scale=0.5,baseline=0.125cm]
\filldraw[fill=c4!15!white,draw=black] (0,0) -- (1,0) -- (1,1) -- (0,1) -- (0,0);
\draw[edge=green] (0.5,0) -- (1,0.5);
\draw[edge=green] (0,0.5) -- (0.5,1);
\node[vertex=green] at (0.5,0) {};
\node[vertex=green] at (1,0.5) {};
\node[vertex=green] at (0,0.5) {};
\node[vertex=green] at (0.5,1) {};
\end{tikzpicture}
\end{array}
\right)_{a,b}
\end{align}
which acts in a tensor product $V_a \otimes V_b$ of two two-dimensional vector scaces. It can be obtained from the $R$-matrix of the stochastic six-vertex model, simply by sending the quantum parameter to zero (see, for example, equation (9) of \cite{wz-j} and set $t=0$). It satisfies the Yang--Baxter equation
\begin{align}
\label{yb}
R_{a,b}(z_a/z_b) R_{a,c}(z_a/z_c) R_{b,c}(z_b/z_c)
=
R_{b,c}(z_b/z_c) R_{a,c}(z_a/z_c) R_{a,b}(z_a/z_b)
\end{align}
for any three parameters $z_a,z_b,z_c$. For our purposes, it is useful to write \eqref{yb} in two slightly different ways, which are simply relabellings of the original equation:
\begin{align}
\label{yb1}
R_{a,b}(x_a/x_b)
R_{a,c}(x_a/y_c)
R_{b,c}(x_b/y_c)
=
R_{b,c}(x_b/y_c)
R_{a,c}(x_a/y_c)
R_{a,b}(x_a/x_b),
\\
\label{yb2}
R_{b,a}(y_b/y_a)
R_{c,a}(x_c/y_a)
R_{c,b}(x_c/y_b)
=
R_{c,b}(x_c/y_b)
R_{c,a}(x_c/y_a)
R_{b,a}(y_b/y_a).
\end{align}
Construct a monodromy matrix by taking an $n$-fold product of $R$-matrices:
\begin{align*}
T_a(x,\y)
=
R_{a,1}(x/y_1)
\dots
R_{a,n}(x/y_n)
=
\begin{pmatrix}
A(x,\y) & B(x,\y)
\\
C(x,\y) & D(x,\y)
\end{pmatrix}_a
\end{align*}
where each entry of the final matrix is an operator acting in $\mathbb{V} = V_1 \otimes \cdots \otimes V_n$. Using the Yang--Baxter equation \eqref{yb1}, one can easily show that
\begin{align*}
[B(x_i,\y),B(x_j,\y)]
=
[C(x_i,\y),C(x_j,\y)]
=
0
\quad
\forall\ i,j,
\end{align*}
while \eqref{yb2} gives rise to the relations
\begin{align}
\label{ex-B}
R_{i+1,i}(y_{i+1}/y_i)
B(x,\y)
=
B(x,s_i \circ \y)
R_{i+1,i}(y_{i+1}/y_i),
\\
\label{ex-C}
R_{i+1,i}(y_{i+1}/y_i)
C(x,\y)
=
C(x,s_i \circ \y)
R_{i+1,i}(y_{i+1}/y_i),
\end{align}
where $s_i \circ \y$ denotes the set $\y$ with $y_i$ and $y_{i+1}$ exchanged. 

In order to describe the action of the monodromy matrix entries, it is useful to have a canonical basis for $\mathbb{V}$. First note that it can be decomposed into subspaces:
\begin{align*}
\mathbb{V} = \bigoplus_{k=0}^{n} \mathbb{V}[k],
\end{align*}
where $\mathbb{V}[k]$ is the span of all $k$-particle states in $V_1 \otimes \cdots \otimes V_n$. We shall use the Young diagram basis for $\mathbb{V}[k]$, whose elements are given by
\begin{align*}
\ket{\lambda}
=
\bigotimes_{i \in \lambda}
\begin{pmatrix} 0 \\ 1 \end{pmatrix}_i
\bigotimes_{j \not\in \lambda}
\begin{pmatrix} 1 \\ 0 \end{pmatrix}_j
\equiv
\bigotimes_{i \in \lambda}
\ket{\begin{tikzpicture} \node[vertex=green] at (0,0) {}; \end{tikzpicture}}_i
\bigotimes_{j \not\in \lambda}
\ket{\begin{tikzpicture} \node[vertex=lightblue] at (0,0) {}; \end{tikzpicture}}_j
\end{align*}
where $\lambda$ is a Young diagram in the $k \times (n-k)$ rectangle. 
We also define 
\[
\ket{0} = \bigotimes_{j=1}^{n}
\ket{\begin{tikzpicture} \node[vertex=lightblue] at (0,0) {}; \end{tikzpicture}}_j \in \mathbb{V}[0].
\]
It should not be confused with the basis element of $\mathbb{V}[k]$ associated to the empty Young diagram $\lambda=\varnothing$, which is
$\ket{\varnothing}
=
\bigotimes_{i=1}^{k}
\ket{\begin{tikzpicture} \node[vertex=green] at (0,0) {}; \end{tikzpicture}}_i
\bigotimes_{j=k+1}^{n}
\ket{\begin{tikzpicture} \node[vertex=lightblue] at (0,0) {}; \end{tikzpicture}}_j$.

It is easy to show that the $B$ and $C$ operators map between the subspaces:
\begin{align*}
B(x,\y) : \mathbb{V}[k] \rightarrow \mathbb{V}[k+1],
\quad\quad
C(x,\y) : \mathbb{V}[k] \rightarrow \mathbb{V}[k-1],
\end{align*}
which motivates the following proposition.
\begin{prop}\label{prop:groth}
Double Grothendieck polynomials and their duals can be written as expectation values in this five-vertex model:
\begin{align}
\label{G-C}
G^{\lambda}(\x;\y)
=
\bra{0} C(x_1,\y) \dots C(x_k,\y) \ket{\lambda},
\\
\label{G*-B}
G_{\lambda}(\x;\y)
=
\bra{\lambda} B(x_1,\y) \dots B(x_k,\y) \ket{0}.
\end{align}
\end{prop}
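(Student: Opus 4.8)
The plan is to prove \eqref{G-C} and \eqref{G*-B} by induction on the Young diagram, following the inductive definition \eqref{ind-0}--\eqref{ind-i} of $G^\lambda$. Writing $P^\lambda(\x;\y):=\bra0 C(x_1,\y)\cdots C(x_k,\y)\ket\lambda$, I would first observe that $P^\lambda$ is well defined (independent of the order of the factors) because the operators $C(x_i,\y)$ mutually commute, and that $P^\lambda$ lands in the correct space: each $C$ lowers the particle number by one, so a string of $k$ of them sends $\ket\lambda\in\mathbb V[k]$ into $\mathbb V[0]=\mathbb C\ket0$. It then suffices to check that $P^\lambda$ obeys the same two defining properties as $G^\lambda$, namely the closed form for the maximal diagram and the Demazure recursion under lowering a frame entry $i+1\mapsto i$.

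For the base case I would take $\lambda$ to be the maximal diagram, with frame $\{n-k+1,\dots,n\}$, so that $\ket\lambda$ carries particles precisely in the top $k$ sites. Sandwiched against $\bra0$, the vertex configurations allowed by the weights \eqref{5vR} are frozen: the lattice admits a unique admissible state, in which each horizontal line $x_i$ crosses each of the first $n-k$ vertical lines $y_j$ in the transmission vertex of weight $1-x_i/y_j$. Multiplying these weights reproduces $\prod_{i=1}^k\prod_{j=1}^{n-k}(1-x_i/y_j)$, matching \eqref{ind-0}.

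The inductive step is the crux. Suppose the frame of $\mu$ is obtained from that of $\lambda$ by replacing $i+1$ with $i$, so that $\ket\lambda$ and $\ket\mu$ differ only at sites $i,i+1$, where $\lambda$ has a particle at $i+1$ (and none at $i$) and $\mu$ has a particle at $i$ (and none at $i+1$). I would insert $R_{i+1,i}(y_{i+1}/y_i)$ next to $\bra0$, using that $\bra0$ is invariant under this operator, and then commute it through all the $C$'s by repeated application of the exchange relation \eqref{ex-C}; this trades the secondary alphabet $\y$ for $s_i\circ\y$ in every factor and leaves $R_{i+1,i}(y_{i+1}/y_i)$ acting directly on $\ket\lambda$. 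Reading off the action of this $R$-matrix on the Young-diagram basis from \eqref{5vR}, the mixed configuration at $i,i+1$ is sent to a combination of $\ket\lambda$ and $\ket\mu$ with coefficients built from $w=y_{i+1}/y_i$, whereas the both-occupied and both-empty configurations are fixed (which also supplies the symmetry $P^\lambda(\x;\y)=P^\lambda(\x;s_i\circ\y)$ whenever $i,i+1$ are both in or both out of the frame). Assembling these identities yields a linear relation among $P^\lambda(\x;\y)$, $P^\lambda(\x;s_i\circ\y)$ and $P^\mu(\x;s_i\circ\y)$, which I would rearrange into exactly the isobaric divided difference $P^\mu=D_i P^\lambda$ of \eqref{ind-i}. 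The main obstacle is precisely this last bookkeeping: one must match the rational prefactors $y_i/y_{i+1}$ coming from the weights $w$ and $1-w$ in \eqref{5vR} against the specific form of $D_i$, keeping careful track of which argument ($\y$ or $s_i\circ\y$) each partition function carries, and reconciling the conventions for the $R$-action on states and for $\bra0$ with those of \eqref{ex-C}.

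Finally, for the dual formula \eqref{G*-B} I see two routes. One is to run the same induction verbatim with $B$ in place of $C$, using $[B,B]=0$, the exchange relation \eqref{ex-B}, and the corresponding inductive characterization of $G_\lambda$. The other, which I would favour, is to deduce \eqref{G*-B} from \eqref{G-C} by a crossing/reflection symmetry of the five-vertex model: reflecting the lattice interchanges the roles of $B$ and $C$, reverses the spin chain and hence sends $\y\mapsto\yy$, and complements the diagram $\lambda\mapsto\lambda^\ast$. Tracking the normalization of the reflected vertex weights should produce exactly the scalar prefactor $\prod_{i=1}^k x_i\prod_{i\in\lambda}y_i^{-1}$, so that $\bra\lambda B(x_1,\y)\cdots B(x_k,\y)\ket0$ matches the definition $G_\lambda(\x;\y)=\prod_{i=1}^k x_i\prod_{i\in\lambda}y_i^{-1}\,G^{\lambda^\ast}(\x;\yy)$.
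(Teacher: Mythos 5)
Your proposal matches the paper's proof of \eqref{G-C} essentially step for step: the same frozen base case for the maximal diagram, the same insertion of $R_{i+1,i}(y_{i+1}/y_i)$ against the invariant $\bra{0}$, commutation through the $C$'s via \eqref{ex-C}, and identification of the resulting linear relation with the Demazure recursion \eqref{ind-i} (the one implicit point in the bookkeeping being that $D_i G^\lambda$ is symmetric in $y_i,y_{i+1}$, which reconciles the alphabets $\y$ and $s_i\circ\y$ on the two sides). For the dual formula the paper takes your first route --- re-running the induction with $B$, \eqref{ex-B}, and the frozen case $\lambda=\varnothing$ --- rather than your preferred reflection argument, which is plausible but would additionally require working out the diagonal conjugation of the non-symmetric $R$-matrix \eqref{5vR} that produces the prefactor $\prod_{i=1}^k x_i\prod_{i\in\lambda}y_i^{-1}$.
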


\begin{proof}
We begin with the proof of \eqref{G-C}. First, it is obvious from graphical considerations that
\begin{align*}
\bra{0} C(x_1,\y) \dots C(x_k,\y) \ket{\lambda}
=
\prod_{i=1}^{k}
\prod_{j=1}^{n-k}
\left(1-\frac{x_i}{y_j}\right)
\end{align*}
when $\lambda = \{n-k+1,\dots,n\}$, since in this case the partition function is completely frozen:
\begin{center}
\begin{tikzpicture}[scale=0.8]
\filldraw[fill=c4!15!white,draw=black] (0,0) -- (0,4) -- (7,4) -- (7,0) -- (0,0);
\foreach\y in {1,...,3}{
\draw[thin] (0,\y) -- (7,\y);
}
\foreach\x in {1,...,6}{
\draw[thin] (\x,0) -- (\x,4);
\node[below] at (\x-0.5,0) {$y_\x$};
}
\node[below] at (6.5,0) {$y_7$};
\foreach\y in {1,...,4}{
\draw[edge=green] (0,\y-0.5) -- (3,\y-0.5) -- (7.5-\y,4);
\node[left] at (0,\y-0.5) {$x_\y$};
\node[vertex=green] at (0,\y-0.5) {};
\node[vertex=green] at (7.5-\y,4) {};
} 
\end{tikzpicture}
\end{center}
This proves that $G^{\lambda}(\x;\y) = \bra{0} C(x_1,\y) \dots C(x_k,\y) \ket{\lambda}$ for $\lambda=k \times (n-k)$, the maximal Young diagram. Now suppose it holds for some generic $\lambda$, such that $i \not\in \lambda, i+1 \in \lambda$. Since $\bra{0} R_{i+1,i}(y_{i+1}/y_i) = \bra{0}$ for all $1 \leq i \leq n-1$, we have
\begin{multline}
\label{induct}
G^{\lambda}(\x;\y)
=
\bra{0} R_{i+1,i}(y_{i+1}/y_i) C(x_1,\y) \dots C(x_k,\y) \ket{\lambda}
\\
=
\bra{0} C(x_1,s_i \circ \y) \dots C(x_k,s_i \circ \y) R_{i+1,i}(y_{i+1}/y_i) \ket{\lambda}
\end{multline}
by $k$ repetitions of \eqref{ex-C}. Finally, $R_{i+1,i}(y_{i+1}/y_i) \ket{\lambda}$ is easily calculated using the explicit form of the $R$-matrix. One finds that
\begin{align*}
R_{i+1,i}(y_{i+1}/y_i) \ket{\lambda}
=
(1-y_{i+1}/y_i) \ket{s_i \circ \lambda}
+
y_{i+1}/y_i \ket{\lambda},
\end{align*}
where $s_i \circ \lambda$ denotes $\lambda$ under the substitution $i+1 \mapsto i$. Using this result in \eqref{induct}, we have shown that
\begin{align*}
G^{\lambda}(\x;\y)
=
(1-y_{i+1}/y_i)
\bra{0} C(x_1,s_i \circ \y) \dots C(x_k,s_i \circ \y) \ket{s_i \circ \lambda}
+
y_{i+1}/y_i
G^{\lambda}(\x;s_i \circ \y)
\end{align*}
Comparing with \eqref{ind-i}, it follows that
\begin{align*}
\bra{0} C(x_1,s_i \circ \y) \dots C(x_k,s_i \circ \y) \ket{s_i \circ \lambda}
=
G^{s_i \circ \lambda}(\x;s_i\circ\y).
\end{align*}
By induction on all such $i$, one proves that $G^{\lambda}(\x;\y) = \bra{0} C(x_1,\y) \dots C(x_k,\y) \ket{\lambda}$ for all $\lambda$ contained in the $k \times (n-k)$ rectangle.

The proof of \eqref{G*-B} proceeds along very similar lines. In this case we have initially
\begin{align*}
\bra{\lambda} B(x_1,\y) \dots B(x_k,\y) \ket{0}
=
\prod_{i=1}^{k}
\left(
\frac{x_i}{y_i}
\right)
\prod_{i=1}^{n}
\prod_{j=k+1}^{n}
\left(
1-\frac{x_i}{y_j}
\right)
\end{align*} 
when $\lambda=\{1,\dots,k\}$, again due to the freezing of the corresponding partition function:
\begin{center}
\begin{tikzpicture}[scale=0.8]
\filldraw[fill=c4!15!white,draw=black] (0,0) -- (0,4) -- (7,4) -- (7,0) -- (0,0);
\foreach\y in {1,...,3}{
\draw[thin] (0,\y) -- (7,\y);
}
\foreach\x in {1,...,6}{
\draw[thin] (\x,0) -- (\x,4);
\node[below] at (\x-0.5,0) {$y_\x$};
}
\node[below] at (6.5,0) {$y_7$};
\foreach\y in {1,...,4}{
\draw[edge=green] (4.5-\y,0) -- (4,\y-0.5) -- (7,\y-0.5) ;
\node[left] at (0,\y-0.5) {$x_\y$};
\node[vertex=green] at (7,\y-0.5) {};
\node[vertex=green] at (4.5-\y,0) {};
} 
\end{tikzpicture}
\end{center}
This proves that $\bra{\lambda} B(x_1,\y) \dots B(x_k,\y) \ket{0} = \prod_{i=1}^{k} x_i \prod_{i \in \lambda} y_i^{-1} G^{\lambda^\ast} (\x;\yy) = G_{\lambda}(\x;\y)$ for $\lambda = \varnothing$. From there one makes use of \eqref{ex-B} and almost directly repeats the reasoning laid out above, to complete the proof of \eqref{G*-B} inductively.

\end{proof}

\subsection{Explicit determinant formulae for double Grothendieck polynomials}

\begin{prop}
Let $\lambda$ be a Young diagram with frame $\{\ell_1,\dots,\ell_k\}$. The double Grothendieck polynomial and its dual admit the following determinant expressions:
\begin{align}
\label{det-G}
G^{\lambda}(\x;\y)
&=
\frac{\det\left( x_j^{k-i} \prod_{m=1}^{\ell_i-1} (1-x_j/y_m) \right)_{1 \leq i,j \leq k}}
{\prod_{1 \leq i<j \leq k} (x_i-x_j)}
\\
\label{det-dual}
G_{\lambda}(\x;\y)
&=
\prod_{i=1}^{k}
(x_i / y_{\ell_i})
\times
\frac{\det\left(x_j^{i-1} \prod_{m = \ell_i+1}^{n} (1-x_j/y_m) \right)_{1\leq i,j \leq k}}
{\prod_{1 \leq i<j \leq k} (x_j-x_i)}
\end{align}
\end{prop}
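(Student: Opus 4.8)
The plan is to establish \eqref{det-G} first, directly from the inductive definition \eqref{ind-0}--\eqref{ind-i}, and then to deduce \eqref{det-dual} from \eqref{det-G} by feeding it through the duality relation $G_\lambda(\x;\y)=\prod_{i=1}^k x_i\prod_{i\in\lambda}y_i^{-1}\,G^{\lambda^\ast}(\x;\yy)$. This keeps the proposition purely combinatorial: it becomes a statement about Demazure operators acting on bialternants, with no appeal to the vertex model needed.

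For the base case of \eqref{det-G} I would take $\lambda$ maximal, so $\ell_i=n-k+i$. After pulling the common factor $\prod_{m=1}^{n-k}(1-x_j/y_m)$ out of the $j$-th column, the $(i,j)$ entry becomes the polynomial $p_i(x_j)=x_j^{k-i}\prod_{m=n-k+1}^{n-k+i-1}(1-x_j/y_m)$, of degree $k-1$ in $x_j$ with lowest term $x_j^{k-i}$ of coefficient $1$. Writing each $p_i$ in the monomial basis $x^{k-1},\dots,x^0$, the change-of-basis matrix is unit lower-triangular, so $\det\big(p_i(x_j)\big)=\det\big(x_j^{k-i}\big)_{i,j}=\prod_{i<j}(x_i-x_j)$; this cancels the denominator and reproduces \eqref{ind-0}.

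The inductive step carries the real content. Suppose \eqref{det-G} holds for $\lambda$ with frame $\{\ell_1<\dots<\ell_k\}$, and let $\mu=s_i\circ\lambda$ be obtained by replacing the unique frame entry $\ell_r=i+1$ by $i$; I must verify $G^\mu=D_iG^\lambda$. The crucial observation is that in the determinant for $G^\lambda$ every row $i'\neq r$ is \emph{symmetric} in $y_i,y_{i+1}$: rows with $\ell_{i'}\le i-1$ contain neither variable, while rows with $\ell_{i'}\ge i+2$ contain the symmetric product $(1-x_j/y_i)(1-x_j/y_{i+1})$. Expanding along row $r$ writes the determinant as $\sum_j M_{r,j}A_{r,j}$ with all cofactors $A_{r,j}$ symmetric, so $D_i$, being a module map over functions symmetric in $y_i,y_{i+1}$, acts only on the row-$r$ entries. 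Since $M_{r,j}=x_j^{k-r}\prod_{m=1}^{i-1}(1-x_j/y_m)\cdot(1-x_j/y_i)$ with symmetric prefactor, and a one-line computation gives $D_i(1-x_j/y_i)=1$, the operator $D_i$ simply erases the factor $(1-x_j/y_i)$, producing the entry appropriate to $\ell_r=i$. Reassembling the cofactor expansion yields exactly the determinant for $\mu$, while the $y$-free Vandermonde denominator is untouched. As the moves $i+1\mapsto i$ generate every Young diagram from the maximal one, this closes the induction.

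Finally, to obtain \eqref{det-dual} I would substitute into \eqref{det-G} the frame of $\lambda^\ast$, namely $\ell^\ast_i=n+1-\ell_{k+1-i}$, together with the reversed alphabet. The product $\prod_{m=1}^{\ell^\ast_i-1}(1-x_j/y_{n+1-m})$ becomes $\prod_{m=\ell_{k+1-i}+1}^{n}(1-x_j/y_m)$ after reindexing $m\mapsto n+1-m$; reversing the order of the $k$ rows then restores the natural indexing at the cost of a sign $(-1)^{\binom{k}{2}}$, which precisely converts $\prod_{i<j}(x_i-x_j)$ into $\prod_{i<j}(x_j-x_i)$ and turns $x_j^{k-i}$ into $x_j^{i-1}$. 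Multiplying by the prefactor $\prod_{i=1}^k x_i\prod_{i\in\lambda}y_i^{-1}=\prod_{i=1}^k(x_i/y_{\ell_i})$ gives \eqref{det-dual}. I expect the main obstacle to be the inductive step, specifically justifying that $D_i$ commutes past the symmetric cofactors and keeping straight exactly which rows carry $y_i$ or $y_{i+1}$; the dual computation is by comparison routine index-chasing once the frame and alphabet reversals are tracked carefully.
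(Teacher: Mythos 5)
Your proof is correct and follows essentially the same route as the paper: both verify that the determinant satisfies the inductive definition \eqref{ind-0}--\eqref{ind-i}, using the fact that every row other than the one being modified is either free of $y_i,y_{i+1}$ or symmetric in them to localise the Demazure operator to a single row, and then deduce \eqref{det-dual} from \eqref{det-G} via the duality relation with the reversed frame and alphabet. The only cosmetic difference is the base case, which the paper settles by degree counting, factor exhaustion and a limit $x_k,\dots,x_1\to 0$, whereas you extract a common column factor and reduce to a Vandermonde by a unit-triangular change of basis; both are fine.
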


\begin{proof}
We prove \eqref{det-G}, by showing that it satisfies the relations \eqref{ind-0}--\eqref{ind-i}. Let 
$\mathcal{G}_{\{\ell_1,\dots,\ell_k\}}$ denote the right hand side of \eqref{det-G}. Consider firstly the case 
$\{\ell_1,\dots,\ell_k\} = \{n-k+1,\dots,n\}$, when
\begin{align*}
\mathcal{G}_{\{n-k+1,\dots,n\}}
=
\frac{\det\left( x_j^{k-i} \prod_{m=1}^{n-k+i-1} (1-x_j/y_m) \right)_{1 \leq i,j \leq k}}
{\prod_{1 \leq i<j \leq k} (x_i-x_j)}.
\end{align*}
By subtracting the degree of the polynomial in the denominator from that of the polynomial in the numerator (considering that $\prod_{1 \leq i<j \leq k} (x_i-x_j)$ is a common factor of the numerator), we find that $\mathcal{G}_{\{n-k+1,\dots,n\}}$ is a polynomial of degree $n-k$ in every variable $x_i$. At the same time, $\mathcal{G}_{\{n-k+1,\dots,n\}}$ clearly has the common factor $\prod_{i=1}^{k} \prod_{j=1}^{n-k} (1-x_i/y_j)$, so by factor exhaustion this determines $\mathcal{G}_{\{n-k+1,\dots,n\}}$ up to a constant in $(x_1,\dots,x_k)$. An easy way to calculate this constant is by sending $x_k,\dots,x_1 \rightarrow 0$ successively, from which we find that
\begin{align*}
\left.
\mathcal{G}_{\{n-k+1,\dots,n\}}
\right|_{x_k,\dots,x_1 \rightarrow 0}
=
1.
\end{align*}
This suffices to show that
\begin{align*}
\mathcal{G}_{\{n-k+1,\dots,n\}}
=
\prod_{i=1}^{k}
\prod_{j=1}^{n-k}
\left(
1-\frac{x_i}{y_j}
\right),
\end{align*}
establishing the required initial condition \eqref{ind-0}. To verify the inductive relation \eqref{ind-i}, choose some $\ell_i$ such that $\ell_{i-1} < \ell_i-1$ and act with the Demazure operator $D_{\ell_i-1}$ on 
$\mathcal{G}_{\{\ell_1,\dots,\ell_k\}}$. Since the first $i-1$ rows of the determinant in $\mathcal{G}_{\{\ell_1,\dots,\ell_k\}}$ do not depend on either $y_{\ell_i-1}$ or $y_{\ell_i}$, while the final $k-i$ rows depend on them symmetrically, one finds that the action of $D_{\ell_i-1}$ can be localised to the $i$-th row of the determinant. It is then immediate that
\begin{align*}
D_{\ell_i-1}
\mathcal{G}_{\{\ell_1,\dots,\ell_i,\dots,\ell_k\}}
=
\mathcal{G}_{\{\ell_1,\dots,\ell_i-1,\dots,\ell_k\}},
\end{align*}
establishing \eqref{ind-i}.

Having proved \eqref{det-G}, it is straightforward to use the fact that $\prod_{i=1}^{k} x_i \prod_{i \in \lambda} y_i^{-1} G^{\lambda^\ast} (\x;\yy) = G_{\lambda}(\x;\y)$ to compute \eqref{det-dual} directly.

\end{proof}

\subsection{A rank-two vertex model}
\label{ssec:rank2}

To go beyond the mere construction of Grothendieck polynomials and study their product rules, it is necessary to construct a more general vertex model. Whereas the model \eqref{5vR} can be obtained as a limit of the six-vertex $R$-matrix (or $U_q(\widehat{\mathfrak{sl}(2)})$ universal $R$-matrix, under a tensor product of fundamental representations), in our subsequent calculations we consider a rank-two vertex model, which descends from the $U_q(\widehat{\mathfrak{sl}(3)})$ universal $R$-matrix. Unlike \eqref{yb}, the Yang--Baxter equation that we now consider contains three different $R$-matrices:
\begin{align}
\label{rank2yb}
\ra_{a,b}(y/x)
\rc_{a,c}(x/z)
\rb_{b,c}(z/y)
=
\rb_{b,c}(z/y)
\rc_{a,c}(x/z)
\ra_{a,b}(y/x).
\end{align}
The $R$-matrices each act in a tensor product $V_a \otimes V_b$ of two three-dimensional vector spaces, and are given explicitly by
\begin{Small}
\begin{align*}
\ra_{a,b}(z)
=
\left(
\begin{array}{ccc|ccc|ccc}
0 & 0 & 0 & 0 & -z & 0 & 0 & 0 & z
\\
0 & 0 & 0 & 0 &0 & 0 & 0 & 0 & 0
\\
0 & 0 & 1 & 0 & 0 & 0 & 0 & 0 & 0
\\
\hline
0 & 0 & 0 & 1 & 0 & 0 & 0 & 0 & 0
\\
0 & 0 & 0 & 0 &0 & 0 & 0 & 0 & 1
\\
0 & 0 & 0 & 0 & 0 & 1 & 0 & 0 & 0
\\
\hline
0 & 0 & 0 & 0 &0 & 0 & 1 & 0 & 0
\\
0 & 0 & 0 & 0 &0 & 0 & 0 & 1 & 0
\\
1 & 0 & 0 & 0 & z & 0 & 0 & 0 & 1-z
\end{array}
\right)_{a,b}
=
\left(
\begin{array}{ccc|ccc|ccc}
0 & 0 & 0 & 0 & \tikz[scale=0.4]{\dtile{{8,{0,0},0},{4,{0,0},1}}} & 0 & 0 & 0 & \tikz[scale=0.4]{\dtile{{2,{0,0},0},{0,{0,0},1}}}
\\
0 & 0 & 0 & 0 &0 & 0 & 0 & 0 & 0
\\
0 & 0 &  \tikz[scale=0.4]{\dtile{{2,{0,0},0},{2,{0,0},1}}} & 0 & 0 & 0 & 0 & 0 & 0
\\
\hline
0 & 0 & 0 & \tikz[scale=0.4]{\dtile{{4,{0,0},0},{4,{0,0},1}}} & 0 & 0 & 0 & 0 & 0
\\
0 & 0 & 0 & 0 &0 & 0 & 0 & 0 & \tikz[scale=0.4]{\dtile{{1,{0,0},0},{3,{0,0},1}}}
\\
0 & 0 & 0 & 0 & 0 & \tikz[scale=0.4]{\dtile{{1,{0,0},0},{1,{0,0},1}}} & 0 & 0 & 0
\\
\hline
0 & 0 & 0 & 0 &0 & 0 & \tikz[scale=0.4]{\dtile{{0,{0,0},0},{0,{0,0},1}}} & 0 & 0
\\
0 & 0 & 0 & 0 &0 & 0 & 0 & \tikz[scale=0.4]{\dtile{{3,{0,0},0},{3,{0,0},1}}} & 0
\\
\tikz[scale=0.4]{\dtile{{0,{0,0},0},{2,{0,0},1}}} & 0 & 0 & 0 & \tikz[scale=0.4]{\dtile{{3,{0,0},0},{1,{0,0},1}}} & 0 & 0 & 0 & \tikz[scale=0.4]{\dtile{{7,{0,0},0},{7,{0,0},1}}}
\end{array}
\right)_{a,b}
\end{align*}
\end{Small}
\begin{Small}
\begin{align*}
\rc_{a,b}(z)
=
\left(
\begin{array}{ccc|ccc|ccc}
0 & 0 & 0 & 0 &0 & 0 & 0 & 0 & 0
\\
0 & 1 & 0 & 1 &0 & 0 & 0 & 0 & 0
\\
0 & 0 & 0 & 0 &0 & 0 & -1 & 0 & 0
\\
\hline
0 & z & 0 & 1 &0 & 0 & 0 & 0 & 0
\\
0 & 0 & 0 & 0 & 1-z & 0 & 0 & 0 & 0
\\
0 & 0 & 0 & 0 &0 & 1 & 0 & z & 0
\\
\hline
0 & 0 & 0 & 0 &0 & 0 & 1 & 0 & 0
\\
0 & 0 & 0 & 0 &0 & 1 & 0 & 1 & 0
\\
0 & 0 & 0 & 0 &0 & 0 & 0 & 0 & 0
\end{array}
\right)_{a,b}
=
\left(
\begin{array}{ccc|ccc|ccc}
0 & 0 & 0 & 0 &0 & 0 & 0 & 0 & 0
\\
0 & \tikz[scale=0.4]{\dtilec{{2,{1,0},0},{2,{0,0},1}}} & 0 & \tikz[scale=0.4]{\dtilec{{2,{1,0},0},{1,{0,0},1}}} &0 & 0 & 0 & 0 & 0
\\
0 & 0 & 0 & 0 &0 & 0 & \tikz[scale=0.4]{\dtilec{{8,{1,0},0},{3,{0,0},1}}} & 0 & 0
\\
\hline
0 & \tikz[scale=0.4]{\dtilec{{1,{1,0},0},{2,{0,0},1}}} & 0 & \tikz[scale=0.4]{\dtilec{{1,{1,0},0},{1,{0,0},1}}} &0 & 0 & 0 & 0 & 0
\\
0 & 0 & 0 & 0 &\tikz[scale=0.4]{\dtilec{{5,{1,0},0},{5,{0,0},1}}} & 0 & 0 & 0 & 0
\\
0 & 0 & 0 & 0 &0 & \tikz[scale=0.4]{\dtilec{{4,{1,0},0},{4,{0,0},1}}} & 0 & \tikz[scale=0.4]{\dtilec{{4,{1,0},0},{0,{0,0},1}}} & 0
\\
\hline
0 & 0 & 0 & 0 &0 & 0 & \tikz[scale=0.4]{\dtilec{{3,{1,0},0},{3,{0,0},1}}} & 0 & 0
\\
0 & 0 & 0 & 0 &0 & \tikz[scale=0.4]{\dtilec{{0,{1,0},0},{4,{0,0},1}}} & 0 & \tikz[scale=0.4]{\dtilec{{0,{1,0},0},{0,{0,0},1}}} & 0
\\
0 & 0 & 0 & 0 &0 & 0 & 0 & 0 & 0
\end{array}
\right)_{a,b}
\end{align*}
\end{Small}
\begin{Small}
\begin{align*}
\rb_{a,b}(z)
=
\left(
\begin{array}{ccc|ccc|ccc}
1-z & 0 & 0 & 0 &1 & 0 & 0 & 0 & 1
\\
0 & 1 & 0 & 0 &0 & 0 & 0 & 0 & 0
\\
0 & 0 & 1 & 0 &0 & 0 & 0 & 0 & 0
\\
\hline
0 & 0 & 0 & 1 &0 & 0 & 0 & 0 & 0
\\
z & 0 & 0 & 0 &0 & 0 & 0 & 0 & 0
\\
0 & 0 & 0 & 0 &0 & 0 & 0 & 0 & 0
\\
\hline
0 & 0 & 0 & 0 &0 & 0 & 1 & 0 & 0
\\
0 & 0 & 0 & 0 &0 & 0 & 0 & 1 & 0
\\
z & 0 & 0 & 0 & -1 & 0 & 0 & 0 & 0
\end{array}
\right)_{a,b}
=
\left(
\begin{array}{ccc|ccc|ccc}
\tikz[scale=0.4]{\dtileb{{6,{0,1},0},{6,{0,0},1}}} & 0 & 0 & 0 &\tikz[scale=0.4]{\dtileb{{3,{0,1},0},{0,{0,0},1}}} & 0 & 0 & 0 & \tikz[scale=0.4]{\dtileb{{1,{0,1},0},{4,{0,0},1}}}
\\
0 & \tikz[scale=0.4]{\dtileb{{0,{0,1},0},{0,{0,0},1}}} & 0 & 0 &0 & 0 & 0 & 0 & 0
\\
0 & 0 & \tikz[scale=0.4]{\dtileb{{4,{0,1},0},{4,{0,0},1}}} & 0 &0 & 0 & 0 & 0 & 0
\\
\hline
0 & 0 & 0 & \tikz[scale=0.4]{\dtileb{{3,{0,1},0},{3,{0,0},1}}} &0 & 0 & 0 & 0 & 0
\\
\tikz[scale=0.4]{\dtileb{{0,{0,1},0},{3,{0,0},1}}} & 0 & 0 & 0 &0 & 0 & 0 & 0 & 0
\\
0 & 0 & 0 & 0 &0 & 0 & 0 & 0 & 0
\\
\hline
0 & 0 & 0 & 0 &0 & 0 & \tikz[scale=0.4]{\dtileb{{1,{0,1},0},{1,{0,0},1}}} & 0 & 0
\\
0 & 0 & 0 & 0 &0 & 0 & 0 & \tikz[scale=0.4]{\dtileb{{2,{0,1},0},{2,{0,0},1}}} & 0
\\
\tikz[scale=0.4]{\dtileb{{4,{0,1},0},{1,{0,0},1}}} & 0 & 0 & 0 & \tikz[scale=0.4]{\dtileb{{8,{0,1},0},{2,{0,0},1}}} & 0 & 0 & 0 & 0
\end{array}
\right)_{a,b}
\end{align*}
\end{Small}

Alongside each $R$-matrix we have written a corresponding matrix whose entries are rhombi. The meaning of these pictorial entries, and how they lead to a combinatorial version of the Yang--Baxter equation \eqref{rank2yb}, will be explained in the next subsection.

It is not necessary to give a detailed derivation of \eqref{rank2yb}, since it can be directly verified. Let us simply remark that it can be obtained as a certain limit of the $U_q(\widehat{\mathfrak{sl}(3)})$ Yang--Baxter equation, where one of the participating vector spaces ($V_b$) is now an {\it anti-fundamental} representation space. This ultimately explains the fact that the $R$-matrices in \eqref{rank2yb} have different non-zero entries, and the inverted spectral parameter dependence of two of the matrices.
  
\subsection{Graphical version of Yang--Baxter equation}

The Yang--Baxter equation \eqref{rank2yb} has a natural graphical representation, which will be our main tool in proving all theorems listed in Section \ref{sec:results}. We have already hinted at this graphical form by assigning each of the three $R$-matrices in \eqref{rank2yb} a rhombus superscript. By joining the three rhombi in the obvious way, we arrive at
\begin{prop}
\label{prop:ybe}
\[
\begin{tikzpicture}[scale=1.2,baseline=0.5*1.73205cm]
\node[text centered,above,left] at (-0.25,0.75*1.73205) {\tiny $e_1$};
\node[text centered,below,left] at (-0.25,0.25*1.73205) {\tiny $e_2$};
\node[text centered,below] at (0.5,0) {\tiny $e_3$};
\node[text centered,above] at (0.5,1.73205) {\tiny $e_6$};
\node[text centered,above,right] at (1.25,0.75*1.73205) {\tiny $e_5$};
\node[text centered,below,right] at (1.25,0.25*1.73205) {\tiny $e_4$};
\filldraw[fill=c4!15!white] (0,0) -- (1,0) -- (1.5,0.5*1.73205) -- (1,1.73205) -- (0,1.73205) -- (-0.5,0.5*1.73205) -- (0,0); 
\draw (0,0) -- (0.5,0.5*1.73205); 
\draw (0,1.73205) -- (0.5,0.5*1.73205); 
\draw (0.5,0.5*1.73205) -- (1.5,0.5*1.73205); 
\node[text centered] at (0,0.5*1.73205) {\tiny $y/x$};
\node[text centered] at (0.75,0.25*1.73205) {\tiny $x/z$};
\node[text centered] at (0.75,0.75*1.73205) {\tiny $z/y$};
%\draw[dotted,arrow=0.5] (0.5,0) -- (1,0.5*1.73205);
%\draw[dotted] (0.5,1.73205) -- (1,0.5*1.73205);
\end{tikzpicture}
\quad
=
\quad
\begin{tikzpicture}[scale=1.2,baseline=0.5*1.73205cm]
\node[text centered,above,left] at (-0.25,0.75*1.73205) {\tiny $e_1$};
\node[text centered,below,left] at (-0.25,0.25*1.73205) {\tiny $e_2$};
\node[text centered,below] at (0.5,0) {\tiny $e_3$};
\node[text centered,above] at (0.5,1.73205) {\tiny $e_6$};
\node[text centered,above,right] at (1.25,0.75*1.73205) {\tiny $e_5$};
\node[text centered,below,right] at (1.25,0.25*1.73205) {\tiny $e_4$};
\filldraw[fill=c4!15!white] (0,0) -- (1,0) -- (1.5,0.5*1.73205) -- (1,1.73205) -- (0,1.73205) -- (-0.5,0.5*1.73205) -- (0,0); 
\draw (1,0) -- (0.5,0.5*1.73205);
\draw (1,1.73205) -- (0.5,0.5*1.73205);
\draw (-0.5,0.5*1.73205) -- (0.5,0.5*1.73205);
\node[text centered] at (1,0.5*1.73205) {\tiny $y/x$};
\node[text centered] at (0.25,0.25*1.73205) {\tiny $z/y$};
\node[text centered] at (0.25,0.75*1.73205) {\tiny $x/z$};
\end{tikzpicture}
\]
\end{prop}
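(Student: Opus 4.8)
The plan is to recognize that the Yang--Baxter equation \eqref{rank2yb} and the asserted graphical identity are two encodings of the same statement, so that the graphical version follows once the dictionary between rhombus pictures and matrix entries is made precise. First I would fix this dictionary. Each of the three spaces $V_a,V_b,V_c$ is three-dimensional, and I identify its basis with the three admissible edge states appearing in the rhombi drawn beside $\ra$, $\rb$, $\rc$ in Section~\ref{ssec:rank2}. A single rhombus then reads as follows: the two edges on one pair of its sides carry the incoming states (the column index of the corresponding $9\times 9$ matrix), the two edges on the opposite pair carry the outgoing states (the row index), and the rhombus is assigned the matrix entry attached to it in the explicit tables. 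Because each of the three matrices has its nonzero entries drawn as rhombi, this establishes a bijection between admissible rhombus fillings and nonzero matrix elements, with matching weights.

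Next I would interpret each hexagon as an operator matrix element. In the left-hand picture the three internal edges emanating from the central vertex cut the hexagon into the three rhombi carrying spectral parameters $y/x$, $x/z$ and $z/y$, i.e.\ carrying $\ra$, $\rc$ and $\rb$ respectively; each internal edge is shared by exactly two of the rhombi and therefore records a state of one of $V_a,V_b,V_c$ that is to be summed over, while the six boundary edges $e_1,\dots,e_6$ stay free. By the dictionary, the sum over admissible internal states of the product of the three rhombus weights is precisely a fixed $(e_1,\dots,e_6)$ matrix element of the composite operator on $V_a\otimes V_b\otimes V_c$ obtained by contracting the three $R$-matrices along their shared spaces. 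Tracking which line enters at which boundary edge, this composite is one of the two orderings appearing in \eqref{rank2yb}. The right-hand picture is the mirror-reflected configuration of rhombi, and the identical argument identifies it with the opposite ordering.

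With these identifications in hand, the equality of the two hexagons, asserted to hold for every choice of boundary states $e_1,\dots,e_6$, is exactly the componentwise equality of the two sides of \eqref{rank2yb} in the basis of $V_a\otimes V_b\otimes V_c$. Since \eqref{rank2yb} holds, having been obtained as a limit of the $U_q(\widehat{\mathfrak{sl}(3)})$ Yang--Baxter equation and being directly verifiable from the explicit matrices, the two hexagons agree and the proposition follows.

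The main obstacle I anticipate is one of bookkeeping rather than of substance: one must verify that the planar gluing reproduces operator composition with the correct ordering and the correct assignment of spectral parameters, and in particular that the three internal edges are shared between the right pairs of rhombi, so that contraction yields genuine $V_a\otimes V_b\otimes V_c$ matrix multiplication and not a mismatched pairing. Additional care is needed because the rhombi are \emph{not} invariant under $180^\circ$ rotation, so the orientation conventions---which pair of sides counts as ``incoming''---must be tracked consistently through both hexagons. Once this is done, the equivalence with \eqref{rank2yb} is immediate, and no further computation is required beyond the direct check of \eqref{rank2yb} itself.
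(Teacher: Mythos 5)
Your proposal is correct and matches the paper's own treatment: the paper likewise regards the proposition as the graphical transcription of the matrix identity \eqref{rank2yb}, obtained by pairing each nonzero $R$-matrix entry with a rhombus via the orientation-dependent state dictionary, fixing the six boundary edges, and summing the three internal edges so that the two hexagons encode the $3^6$ components of the two sides of \eqref{rank2yb}, which is itself verified directly. No genuinely different route is taken.
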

The six external edges of the resulting hexagon are set to some definite states $\{e_1,\dots,e_6\}$, where each $e_j$ can take values in $(1,2,3)$, which are held fixed on both sides of the equation. The three internal edges are summed over all possible values $(1,2,3)$. This relation therefore encodes all $3^6$ components of the matrix equation \eqref{rank2yb}. In addition, we have indicated the spectral parameter dependence of each rhombus.

Alternatively, one can consider each edge state (external or internal) to take values in some subset of (empty, red, green, red+green). We construct a dictionary between numeric and coloured labellings, which varies according to the rotation of an edge:
\begin{itemize}
\item On horizontal edges, $(1,2,3)=(\text{red},\text{green},\text{empty})$.
\item On 60/240 degree edges, $(1,2,3) = (\text{red+green},\text{empty},\text{green})$.
\item On 120/300 degree edges, $(1,2,3) = (\text{empty},\text{red+green},\text{red})$.
\end{itemize}
Using this dictionary, one is then able to pair each component $(i_a,j_a | i_b,j_b)$ of an $R$-matrix with a tile having edge states $(i_a,j_a|i_b,j_b)$ (note, however, that the internal decoration of the tiles is purely aesthetic). Doing this, one obtains the graphical version of the $R$-matrices in the previous subsection. For convenience, we tabulate the Boltzmann weights of all rhombi below:
\begin{align}\label{eq:table}
\begin{tabular}{|c|c|c|c|c|c|c|c|c|c|c|}
\hline &&&&&&&&&&
\\
\dtileb{{3,{0,1},0},{3,{0,0},1}} 
&
\dtileb{{0,{0,1},0},{3,{0,0},1}}
&
\dtileb{{3,{0,1},0},{0,{0,0},1}}
&
\dtileb{{0,{0,1},0},{0,{0,0},1}}
&
\dtileb{{6,{0,1},0},{6,{0,0},1}}
&
\dtileb{{1,{0,1},0},{1,{0,0},1}}
&
\dtileb{{2,{0,1},0},{2,{0,0},1}}
&
\dtileb{{4,{0,1},0},{1,{0,0},1}}
&
\dtileb{{1,{0,1},0},{4,{0,0},1}}
&
\dtileb{{4,{0,1},0},{4,{0,0},1}}
&
\dtileb{{8,{0,1},0},{2,{0,0},1}}
\\
$1$
&
$z$
&
$1$
&
$1$
&
$1-z$
&
$1$
&
$1$
&
$z$
&
$1$
&
$1$
&
$-1$
\\
\hline &&&&&&&&&&
\\
\dtile{{2,{0,0},0},{2,{0,0},1}}
&
\dtile{{0,{0,0},0},{2,{0,0},1}}
&
\dtile{{2,{0,0},0},{0,{0,0},1}}
&
\dtile{{0,{0,0},0},{0,{0,0},1}}
&
\dtile{{7,{0,0},0},{7,{0,0},1}}
&
\dtile{{1,{0,0},0},{1,{0,0},1}}
&
\dtile{{4,{0,0},0},{4,{0,0},1}}
&
\dtile{{3,{0,0},0},{1,{0,0},1}}
&
\dtile{{1,{0,0},0},{3,{0,0},1}}
&
\dtile{{3,{0,0},0},{3,{0,0},1}}
&
\dtile{{8,{0,0},0},{4,{0,0},1}}
\\
$1$
&
$1$
&
$z$
&
$1$
&
$1-z$
&
$1$
&
$1$
&
$z$
&
$1$
&
$1$
&
$-z$
\\
\hline &&&&&&&&&&
\\
\dtilec{{4,{1,0},0},{4,{0,0},1}}
&
\dtilec{{0,{1,0},0},{4,{0,0},1}}
&
\dtilec{{4,{1,0},0},{0,{0,0},1}}
&
\dtilec{{0,{1,0},0},{0,{0,0},1}}
&
\dtilec{{5,{1,0},0},{5,{0,0},1}}
&
\dtilec{{1,{1,0},0},{1,{0,0},1}}
&
\dtilec{{3,{1,0},0},{3,{0,0},1}}
&
\dtilec{{2,{1,0},0},{1,{0,0},1}}
&
\dtilec{{1,{1,0},0},{2,{0,0},1}}
&
\dtilec{{2,{1,0},0},{2,{0,0},1}}
&
\dtilec{{8,{1,0},0},{3,{0,0},1}}
\\
$1$
&
$1$
&
$z$
&
$1$
&
$1-z$
&
$1$
&
$1$
&
$1$
&
$z$
&
$1$
&
$-1$
\\
\hline
\end{tabular}
\end{align}

\begin{rmk}
Note that the weights are almost the same in the various rows of the table \eqref{eq:table}.
In fact, simple conjugations of the weights allow going from one row to another;
in particular, the modified weights \eqref{eq:modwei} are exactly the weights of the third row.
One could even, with an appropriate conjugation, make the three series of weights equal, thus rendering the model completely 120 degree rotationally invariant; 
but this would involve introducing cubic roots of $z$, which is rather cumbersome.
\end{rmk}

\subsection{Reduction to two colors}\label{sec:red2}
Finally, let us explain how the Grothendieck polynomials and their duals can be recovered from the three-state 
$R$-matrices in Section \ref{ssec:rank2}. We will only show how the reduction works in the case of $\rc(z)$, since this is the case which impacts on our subsequent proofs, but a similar reduction is also possible for $\ra(z)$ and $\rb(z)$.

The key to the reduction is the following, trivial observation. Let $\mathcal{T}$ be a tiling of the plane using the set of tiles $\{ \tikz[scale=0.5]{\dtilec{{4,{1,0},0},{4,{0,0},1}}} \}$. If no red particle appears on the boundary $\partial \mathcal{T}$ of the domain, the only tiles used in its interior are \tikz[scale=0.5]{\dtilec{{4,{1,0},0},{4,{0,0},1}}}, \tikz[scale=0.5]{\dtilec{{0,{1,0},0},{4,{0,0},1}}}, \tikz[scale=0.5]{\dtilec{{4,{1,0},0},{0,{0,0},1}}}, \tikz[scale=0.5]{\dtilec{{0,{1,0},0},{0,{0,0},1}}}, \tikz[scale=0.5]{\dtilec{{5,{1,0},0},{5,{0,0},1}}}. To see this, notice that the remaining six tiles in the set $\{ \tikz[scale=0.5]{\dtilec{{4,{1,0},0},{4,{0,0},1}}} \}$ all feature red lines, and since it is impossible to construct red loops, using of any of these tiles would necessarily produce red particles on $\partial \mathcal{T}$ by conservation arguments.

From this observation, we conclude that the Grothendieck polynomials and their duals are given by the partition functions
\begin{equation}\label{eq:red2}
G^{\lambda}(\x;\y)=\hskip-1cm
\begin{tikzpicture}[scale=0.7,rotate=90,baseline=3cm]
\begin{scope}[cm={sin(60),-cos(60),0,1,(0,0)}]
\filldraw[fill=c4!15!white,draw=black] (0,0) -- (0,4) -- (7,4) -- (7,0) -- (0,0);
\foreach\y in {1,...,3}{
\draw[thin] (0,\y) -- (7,\y);
}
\foreach\x in {1,...,6}{
\draw[thin] (\x,0) -- (\x,4);
}
\foreach\x in {0,...,6}{
\node[vertex=lightblue] at (\x+0.5,0) {};
}
\node[right] at (0.5,0) {$y_1$};
\node[right,rotate=60] at (3,-0.5) {$\ldots$};
\node[right] at (6.5,0) {$y_n$};
\draw[->] (0.5,4.5) --node[left] {$\lambda$} (6.5,4.5);
\node[vertex=lightblue] at (0.5,4) {};\node[vertex=lightblue] at (4.5,4) {};\node[vertex=lightblue] at (6.5,4) {};
\node[vertex=green] at (1.5,4) {};\node[vertex=green] at (2.5,4) {};\node[vertex=green] at (3.5,4) {};\node[vertex=green] at (5.5,4) {};
\foreach\y in {1,...,4}{
\node[vertex=green] at (0,\y-0.5) {};
\node[vertex=lightblue] at (7,\y-0.5) {};
}
\node[below] at (0,0.5) {$x_1$};
\node[below] at (-0.3,2) {$\ldots$};
\node[below] at (0,3.5) {$x_k$}; 
\end{scope}
\end{tikzpicture}
\quad
G_{\lambda}(\x;\y)=\hskip-1.2cm
\begin{tikzpicture}[scale=0.7,rotate=90,baseline=3cm]
\begin{scope}[cm={sin(60),-cos(60),0,1,(0,0)}]
\filldraw[fill=c4!15!white,draw=black] (0,0) -- (0,4) -- (7,4) -- (7,0) -- (0,0);
\foreach\y in {1,...,3}{
\draw[thin] (0,\y) -- (7,\y);
}
\foreach\x in {1,...,6}{
\draw[thin] (\x,0) -- (\x,4);
}
\foreach\x in {0,...,6}{
\node[vertex=lightblue] at (\x+0.5,4) {};
}
\node[right] at (0.5,0) {$y_1$};
\node[right,rotate=60] at (3,-0.5) {$\ldots$};
\node[right] at (6.5,0) {$y_n$};
\draw[->] (0.5,-1) --node[right] {$\lambda$} (6.5,-1);
\node[vertex=lightblue] at (0.5,0) {};\node[vertex=lightblue] at (2.5,0) {};\node[vertex=lightblue] at (5.5,0) {};
\node[vertex=green] at (1.5,0) {};\node[vertex=green] at (4.5,0) {};\node[vertex=green] at (3.5,0) {};\node[vertex=green] at (6.5,0) {};
\foreach\y in {1,...,4}{
\node[vertex=lightblue] at (0,\y-0.5) {};
\node[vertex=green] at (7,\y-0.5) {};
}
\node[below] at (0,0.5) {$x_1$};
\node[below] at (-0.3,2) {$\ldots$};
\node[below] at (0,3.5) {$x_k$}; 
\end{scope} 
\end{tikzpicture}
\end{equation}
where both regions are considered to be tilings by the entries of $\rc(z)$, and the spectral parameter of each tile is the ratio $x_i/y_j$ of the two variables which intersect it. Indeed, we can easily verify that these partition functions are equivalent to those studied in Section \ref{ssec:5v}, up to an affine transformation of the tiles (rotation by 90 degrees, followed by a 30 degree shearing).

\section{Proofs of the main theorems}
\label{sec:proofs}

In what follows, we use an abbreviated notation to describe the parametrization of the weights of puzzles:
we write the name of the alphabet, the arrow indicating as usual the ordering of the indices. For example, the figures of
Theorems~\ref{thm:equiv} and \ref{thm:MS} become respectively
\[
c^{\lambda,\mu}_{\,\,\nu}(\y)=\triup{\lambda/->,\mu/->,\nu/<-}{\y}
\qquad
c^\nu_{\lambda,\mu}(\z;\y)=
\loz{\nu/->,\varnothing/->,\mu/<-,\lambda/<-}{\z}{\y}
\]

We begin with a technical result.

\subsection{Key lemma}
\begin{lem}\label{lem:split}
Consider an $n \times n$ lozenge-shaped puzzle framed by four Young diagrams $\lambda,\mu,\nu,\rho$: 
\begin{align}
\label{carreau}
\loz{\lambda/->,\mu/->,\nu/->,\rho/->}{\z}{\z}
\end{align}
The Young diagrams $\lambda$ and $\nu$ are specified by binary strings consisting of green particles and empty sites, while $\mu$ and $\rho$ are specified by binary strings of empty sites and red particles. All Young diagrams live in a $k \times (n-k)$ rectangle.

Assuming that the two alphabets of the puzzle coincide, as shown, the following results hold:
\begin{enumerate}
\item The puzzle can be expressed as a sum over a product of two triangle-shaped lozenges:
\[
\loz{\lambda/->,\mu/->,\nu/->,\rho/->}{\z}{\z}
=
\sum_\sigma
\triup{\lambda/->,\mu/->,\sigma/<-}{\z}
\tridown{\sigma/->,\nu/->,\rho/->}{\z}
\]
where $\sigma$ is a Young diagram encoded by a binary string of green and red particles.

\item If one of the top Young diagrams is empty, \ie\ (a) $\lambda=\varnothing$ or (b) $\mu=\varnothing$,
then the top half of the puzzle is frozen with weight $1$, and $\sigma=\mu$ in case (a), $\sigma=\lambda$ in case (b).

\item If one of the bottom Young diagrams is empty, \ie\ (a) $\rho=\varnothing$ or (b) $\nu=\varnothing$, then the bottom half vanishes unless $\sigma^\ast \vartriangleright \nu$ in case (a), $\sigma^\ast \vartriangleright \rho$ in case (b). For each such $\sigma$ there is a unique configuration of the bottom half, with weight
\begin{align*}
(-1)^{|\sigma^\ast|-|\rho|-|\nu|}\prod_{i\in\sigma} z_i \prod_{i=n-k+1}^n z_i^{-1}.
\end{align*}
\end{enumerate}
\end{lem}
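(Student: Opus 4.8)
The plan is to pass through the two-colour reduction of Section~\ref{sec:red2}, so that each triangular half of the lozenge becomes a concrete five-vertex partition function of the type appearing in \eqref{G-C}--\eqref{G*-B}, whose freezing can then be analysed by hand. For part (1) I would simply cut the lozenge \eqref{carreau} along its short horizontal diagonal into the top up-triangle and the bottom down-triangle. Every edge lying on this diagonal is horizontal, hence by the boundary conventions for horizontal edges it carries either a red or a green line and nothing else; recording the greens yields a length-$n$ binary string with $k$ greens, \ie\ a Young diagram $\sigma$, the reds filling the complementary positions. Since the total weight is by definition the product over elementary lozenges, and each elementary lozenge lies in a single half once those straddling the diagonal are assigned to the lower triangle, the weight factorises as a sum over $\sigma$ of a top contribution times a bottom contribution. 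The one genuine point to check is the bookkeeping that $\sigma$ read on the base of the up-triangle is the reverse orientation of $\sigma$ read on the top of the down-triangle — precisely the orientation subtlety already flagged around the master equation — so that the two pieces are the up-triangle on $\lambda,\mu,\sigma$ and the down-triangle on $\sigma,\nu,\rho$.

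For part (2), suppose one of the two upper sides is empty. The heuristic content is the identity $G^\varnothing G^\mu=G^\mu$, but since the lemma precedes the product theorems I would argue directly by conservation: an empty upper side injects no green line along that boundary, so continuity of the red and green lines forces the boundary data to propagate straight across the up-triangle, leaving exactly one admissible tiling. Inspecting \eqref{eq:wei} shows that every elementary lozenge of this frozen configuration has weight $1$. Hence $\sigma=\mu$ in case (a) and $\sigma=\lambda$ in case (b), each contributing total weight $1$, as claimed.

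Part (3) is where the real work lies, and I expect it to be the main obstacle. Here one of the lower sides is empty and I would analyse the bottom down-triangle, which by \eqref{G*-B} is the dual-polynomial side of the model. Setting $\rho=\varnothing$ (or $\nu=\varnothing$) again freezes the configuration by red/green conservation, but now the surviving diagram $\sigma$ is \emph{not} unique: for each $\sigma$ there is at most one admissible frozen tiling, and I would identify the condition for its existence as exactly $\sigma^\ast\vartriangleright\nu$ (resp.\ $\sigma^\ast\vartriangleright\rho$), since $\vartriangleright$ is by definition the horizontal-and-vertical strip condition relating the two bounding green/red strings. The delicate remaining task is to evaluate the weight of this unique tiling: the frozen region is forced to contain the equivariant and $K$-tiles carrying the nontrivial weights of \eqref{eq:wei}, and evaluating each at $w=z_i/z_j$ and collecting the monomial should reproduce $\prod_{i\in\sigma}z_i\prod_{i=n-k+1}^n z_i^{-1}$, exactly as in the boundary freezing computed in the proof of Proposition~\ref{prop:groth}, while the negative weights $-z$ and $-1$ produce the sign $(-1)^{|\sigma^\ast|-|\rho|-|\nu|}$. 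The crux, on which the whole lemma turns, is this bookkeeping: tracking precisely which tiles appear along the forced diagonal, checking that the powers of $z_i$ balance to the stated monomial, and matching the number of negative-weight tiles to the exponent $|\sigma^\ast|-|\rho|-|\nu|$.
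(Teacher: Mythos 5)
Your overall architecture (cut along the horizontal diagonal, freeze the top half when a top diagram is empty, analyse the bottom half when a bottom diagram is empty) is the same as the paper's, but two of your steps rest on incorrect justifications and the third—the actual content of the lemma—is deferred rather than carried out. In part (1) you claim every edge on the cutting diagonal carries exactly one colour ``by the boundary conventions for horizontal edges.'' Those conventions are imposed only on the boundary of the domain; the diagonal is internal to the lozenge, and internal horizontal edges can a priori be empty or carry both colours (via the tile of weight $1-w$ in \eqref{eq:wei}). The argument you are skipping is precisely the point: because the two alphabets coincide, the spectral ratio on every tile straddling the diagonal equals $1$, so the weight $1-w$ vanishes and double occupation is impossible; and since $\nu$ carries $k$ green and $\rho$ carries $n-k$ red particles, conservation forces all $n$ particles to cross the diagonal, so no edge there is empty. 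Without both observations the cut does not produce valid triangle boundary data. Similarly, in part (2) your premise that ``an empty upper side injects no green line'' is false: $\lambda=\varnothing$ has frame $\{1,\dots,k\}$, so the first $k$ sites of that side \emph{are} green. The freezing follows not from the absence of green lines but from a strip-peeling induction: the state of the first site of the $\mu$ side determines the entire adjacent diagonal strip, reducing the triangle from size $n$ to $n-1$, and one iterates $n$ times. (Your opening plan to route through the two-colour reduction of Sect.~\ref{sec:red2} also cannot work as stated, since that reduction requires no red particles on the boundary, which fails for both halves of the lozenge.)

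The decisive gap is part (3), which you correctly identify as the crux but then leave as ``bookkeeping.'' The bottom half is \emph{not} frozen to a single configuration: for $\rho=\varnothing$ there are $2^{g(\nu)}$ admissible $\sigma$, where $g(\nu)$ counts the green particles of $\nu$ followed by an empty site, each such particle independently choosing whether to hop one step left through a $K$-tile. The existence condition $\sigma^\ast\vartriangleright\nu$, the uniqueness of the configuration for each such $\sigma$, the monomial $\prod_{i\in\sigma}z_i\prod_{i=n-k+1}^n z_i^{-1}$, and the sign $(-1)^{|\sigma^\ast|-|\nu|}$ all come out of an explicit strip-by-strip case analysis (a green at the last $\nu$-site forces a strip of weight $z_1/z_{n-k+1}$; an empty site gives either a weight-$1$ strip or, when preceded by a green, a $K$-tile strip of weight $-z_1/z_{n-k+1}$ that shrinks the region by two). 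None of this is established by your proposal, so the statement remains unproven as written.
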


\begin{proof}
Draw a horizontal line bisecting \eqref{carreau}, and consider the $n$ tiles which lie on it. Because the two alphabets coincide, the ratio of spectral parameters of every tile along this line is equal to 1. This means that the tile 
$\tikz[scale=0.5,baseline=-0.45cm]{\dtile{{7,{0,0},0},{7,{0,0},1}}}$ can never occur along the central horizontal line. This, in turn, implies that each site along the horizontal line can be unoccupied, occupied by a green or red particle, but never occupied by both.

Now consider the Young diagrams $\nu$ and $\rho$. $\nu$ contains $k$ green particles and $n-k$ empty sites, while $\rho$ contains $n-k$ red particles and $k$ empty sites. Since particles must be conserved, we conclude that in every possible configuration of \eqref{carreau}, a total of $n$ particles cross the bisecting horizontal line. This completely saturates the sites along the line, since double-occupation is forbidden. We conclude that the central horizontal line always produces a binary string of green and red particles, and (1) follows immediately.

For the proof of (2), observe that the set of tiles $\{ \tikz[scale=0.5,baseline=-0.45cm] {\dtile{{4,{0,0},0},{4,{0,0},1}}} \}$ are (collectively) invariant under reflection about their central vertical axis and the interchange red $\leftrightarrow$ green. Using this symmetry, one can easily show that the cases (a) and (b) are equivalent, so we focus on (a). Because $\lambda = \varnothing$, the first $k$ sites on the top left boundary are occupied by green particles, and the remainder are empty. Drawing only the top half of \eqref{carreau} we have the boundary conditions shown in Figure \ref{fig:1}(i).
\begin{figure}
\begin{tabular}{cccc}
\def\size{5}
\tikz[scale=0.9]{\equivpuzzle{}\begin{scope}[puz] 
\node[vertex=lightblue] at (0,0.5) {}; \node[vertex=lightblue] at (0,1.5) {};
\node[vertex=lightblue] at (0.5,0) {}; \node[vertex=lightblue] at (2.5,0) {}; \node[vertex=lightblue] at (4.5,0) {};
\node[vertex=green] at (0,4.5) {}; \node[vertex=green] at (0,3.5) {}; \node[vertex=green] at (0,2.5) {}; \node[vertex=red] at (1.5,0) {}; \node[vertex=red] at (3.5,0) {}; \end{scope}}
&\quad
\def\size{5}
\tikz[scale=0.9]{\equivpuzzle{{1,{0,0},0},{1,{0,0},1},{1,{1,0},0},{3,{1,0},1},{3,{2,0},0},{3,{2,0},1},{3,{3,0},0},{3,{3,0},1},{3,{4,0},0}}}
&\quad
\def\size{5}
\tikz[scale=0.9]{\equivpuzzle{{4,{0,0},0},{4,{0,0},1},{4,{1,0},0},{4,{1,0},1},{0,{2,0},0},{0,{2,0},1},{0,{3,0},0},{0,{3,0},1},{0,{4,0},0}}}
\\ \\ 
(i) &\quad (ii) &\quad (iii) 
\end{tabular}
\caption{The top half of \eqref{carreau} for $\lambda = \varnothing$.}
\label{fig:1}
\end{figure}

Now consider the first site on the boundary corresponding with $\mu$. {\bf 1.} If it is occupied by a red particle, the path traced by that particle is completely constrained: it must propagate diagonally downward, ultimately terminating at the first site of the horizontal boundary. Since that site cannot then also be occupied by a green particle, the $k$ green particles propagate horizontally one step to the right. The result is a shrinking of the triangular lozenge from size $n$ to $n-1$, with boundary conditions now featuring $k$ green particles and $n-k-1$ red particles, as shown in Figure \ref{fig:1}(ii). {\bf 2.} If it is empty, we can conclude that the first site of the horizontal boundary cannot be occupied by a red particle. Indeed, were that site occupied by a red particle, it would be impossible to tile the triangular lozenge using the available tiles. We conclude that the first site of the horizontal boundary is occupied by a green particle, forcing all $k$ green particles to propagate one step diagonally downward. The result is a shrinking of the triangular lozenge from size $n$ to $n-1$, with boundary conditions now featuring $k-1$ green particles and $n-k$ red particles, as shown in Figure \ref{fig:1}(iii).

In either of the cases {\bf 1} or {\bf 2}, we arrive at a smaller triangular region with boundary conditions that allow us to repeat our reasoning. After $n$ iterations of this argument, the triangular lozenge will be completely frozen, with all red lines propagating diagonally downward, and green particles occupying all ``spare'' sites on the horizontal boundary: 
\begin{center}
\def\size{5}
\tikz[scale=0.9]{\equivpuzzle{{4,{0,0},0},{4,{0,0},1},{4,{1,0},0},{4,{1,0},1},{0,{2,0},0},{0,{2,0},1},{0,{3,0},0},{0,{3,0},1},{0,{4,0},0},{1,{0,1},0},{1,{0,1},1},{1,{1,1},0},{3,{1,1},1},{3,{2,1},0},{3,{2,1},1},{3,{3,1},0},
{4,{0,2},0},{4,{0,2},1},{0,{1,2},0},{0,{1,2},1},{0,{2,2},0},{1,{0,3},0},{3,{0,3},1},{3,{1,3},0},{0,{0,4},0}}}
\end{center}
The total Boltzmann weight is just 1 (since the tiles \tikz[scale=0.5,baseline=-0.45cm]{\dtile{{2,{0,0},0},{0,{0,0},1}}} and \tikz[scale=0.5,baseline=-0.45cm]{\dtile{{8,{0,0},0},{4,{0,0},1}}} can never occur, and \tikz[scale=0.5,baseline=-0.45cm]{\dtile{{3,{0,0},0},{1,{0,0},1}}} can only occur along the horizontal line, where the ratio of spectral parameters is always 1), while the resulting Young diagram along the horizontal boundary is 
$\sigma=\mu$. This completes the proof of (2)(a). 

The proof of (3) is similar, but extra care must be taken because of the tile \tikz[scale=0.5,baseline=-0.45cm]{\dtile{{8,{0,0},0},{4,{0,0},1}}}. Indeed, the presence of this tile prevents the set $\{ \tikz[scale=0.5,baseline=-0.45cm] {\dtile{{4,{0,0},0},{4,{0,0},1}}} \}$  from being 180 degree rotationally invariant, so we cannot simply treat the bottom half of the lozenge \eqref{carreau} in the same way as we treated the top half. As before, we focus on (3)(a), since (3)(b) can be deduced by reflective symmetry arguments. Because $\rho = \varnothing$, the final $n-k$ sites along the bottom left boundary are occupied by red particles, and the remainder are empty. Drawing only the bottom half of \eqref{carreau}, we have boundary conditions of the type shown in Figure \ref{fig:2}(i).
\begin{figure}
\begin{tabular}{cccc}
\def\size{5}
\tikz[scale=-0.9]{\equivpuzzle{}\begin{scope}[puz] 
\node[vertex=lightblue] at (0,0.5) {}; \node[vertex=lightblue] at (0,3.5) {};
\node[vertex=lightblue] at (0.5,0) {}; \node[vertex=lightblue] at (1.5,0) {}; \node[vertex=lightblue] at (2.5,0) {};
\node[vertex=green] at (0,4.5) {}; \node[vertex=green] at (0,2.5) {}; \node[vertex=green] at (0,1.5) {}; \node[vertex=red] at (4.5,0) {}; \node[vertex=red] at (3.5,0) {}; \end{scope}}
&\quad
\def\size{5}
\tikz[scale=-0.9]{\equivpuzzle{
{0,{0,0},0},{0,{0,0},1},{0,{0,1},0},{0,{0,1},1},{0,{0,2},0},{2,{0,2},1},{2,{0,3},0},{2,{0,3},1},{2,{0,4},0}}}
&\quad
\def\size{5}
\tikz[scale=-0.9]{\equivpuzzle{
{4,{0,0},0},{4,{0,0},1},{4,{0,1},0},{4,{0,1},1},{4,{0,2},0},{4,{0,2},1},{1,{0,3},0},{1,{0,3},1},{1,{0,4},0}}}
&\quad
\def\size{5}
\tikz[scale=-0.9]{\equivpuzzle{
{4,{0,0},0},{4,{0,0},1},{4,{0,1},0},{4,{0,1},1},{4,{0,2},0},{8,{0,2},1},{2,{0,3},0},{2,{0,3},1},{2,{0,4},0},{0,{1,0},0},{0,{1,0},1},{0,{1,1},0},{0,{1,1},1},{3,{1,2},0},{1,{1,2},1},{1,{1,3},0}}}
\\ \\
(i) &\quad (ii) &\quad (iii) &\quad (iv)
\end{tabular}
\caption{The bottom half of \eqref{carreau} for $\rho=\varnothing$.}
\label{fig:2}
\end{figure}

Now consider the last site of the boundary corresponding with $\nu$. {\bf 1.} If it is occupied by a green particle, that particle is forced to evolve to the first site of the horizontal boundary, without deviation. This constrains the red particles to each take one step in the horizontal direction, and we obtain a triangular region of size $n-1$ bordered by $n-k$ red and $k-1$ green particles, as shown in Figure \ref{fig:2}(ii).
The Boltzmann weight of the frozen strip is $z_1/z_{n-k+1}$, due to the appearance of the tile  \tikz[scale=0.5,baseline=-0.45cm]{\dtile{{2,{0,0},0},{0,{0,0},1}}}. {\bf 2.} If it is unoccupied, two situations may arise. The first is that all red particles propagate one step diagonally upwards, as in Figure \ref{fig:2}(iii). This shrinks the size of the triangular region by one, and the resulting region is bordered by $n-k-1$ red and $k$ green particles. The Boltzmann weight of the frozen strip is 1. The second possibility is the configuration of Figure \ref{fig:2}(iv), which shrinks the size of the triangular region by {\it two,} the resulting region being bordered by $n-k-1$ red and $k-1$ green particles. This configuration is only possible if the second-last site of the $\nu$ boundary is occupied by a green particle. In this case the Boltzmann weight of the frozen strip is $-z_1/z_{n-k+1}$, due to the tile \tikz[scale=0.5,baseline=-0.45cm]{\dtile{{8,{0,0},0},{4,{0,0},1}}}.

One is able to iterate this process, successively reducing the size of the triangular region. In view of the two possibilities listed in {\bf 2} it will not, in general, be completely frozen (in contrast to part (2) of the lemma). In fact there will be a total of $2^{g(\nu)}$ configurations; $g(\nu)$ being the number of green particles in $\nu$ which are followed by an empty site. For each of these $g(\nu)$ particles there is the possibility of hopping one step to the left, or not, in the transition to the state along the horizontal line:
\begin{center}
\begin{tabular}{cccc}
\def\size{5}
\tikz[scale=-0.9]{\equivpuzzle{
{4,{0,0},0},{4,{0,0},1},{4,{0,1},0},{4,{0,1},1},{4,{0,2},0},{4,{0,2},1},{1,{0,3},0},{1,{0,3},1},{1,{0,4},0},{0,{1,0},0},{0,{1,0},1},{0,{1,1},0},{0,{1,1},1},{0,{1,2},0},{2,{1,2},1},{2,{1,3},0},{0,{2,0},0},{0,{2,0},1},{0,{2,1},0},{2,{2,1},1},{2,{2,2},0},{4,{3,0},0},{4,{3,0},1},{1,{3,1},0},{0,{4,0},0}}}
&\quad
\def\size{5}
\tikz[scale=-0.9]{\equivpuzzle{
{4,{0,0},0},{4,{0,0},1},{4,{0,1},0},{4,{0,1},1},{4,{0,2},0},{4,{0,2},1},{1,{0,3},0},{1,{0,3},1},{1,{0,4},0},{0,{1,0},0},{0,{1,0},1},{0,{1,1},0},{0,{1,1},1},{0,{1,2},0},{2,{1,2},1},{2,{1,3},0},{0,{2,0},0},{0,{2,0},1},{0,{2,1},0},{2,{2,1},1},{2,{2,2},0},{4,{3,0},0},{8,{3,0},1},{2,{3,1},0},{3,{4,0},0}}}
&\quad
\def\size{5}
\tikz[scale=-0.9]{\equivpuzzle{
{4,{0,0},0},{4,{0,0},1},{4,{0,1},0},{4,{0,1},1},{4,{0,2},0},{8,{0,2},1},{2,{0,3},0},{2,{0,3},1},{2,{0,4},0},{0,{1,0},0},{0,{1,0},1},{0,{1,1},0},{0,{1,1},1},{3,{1,2},0},{1,{1,2},1},{1,{1,3},0},{0,{2,0},0},{0,{2,0},1},{0,{2,1},0},{2,{2,1},1},{2,{2,2},0},{4,{3,0},0},{4,{3,0},1},{1,{3,1},0},{0,{4,0},0}}}
&\quad
\def\size{5}
\tikz[scale=-0.9]{\equivpuzzle{
{4,{0,0},0},{4,{0,0},1},{4,{0,1},0},{4,{0,1},1},{4,{0,2},0},{8,{0,2},1},{2,{0,3},0},{2,{0,3},1},{2,{0,4},0},{0,{1,0},0},{0,{1,0},1},{0,{1,1},0},{0,{1,1},1},{3,{1,2},0},{1,{1,2},1},{1,{1,3},0},{0,{2,0},0},{0,{2,0},1},{0,{2,1},0},{2,{2,1},1},{2,{2,2},0},{4,{3,0},0},{8,{3,0},1},{2,{3,1},0},{3,{4,0},0}}}
\\ \\
$\sigma^\ast=\nu$ &\quad $|\sigma^\ast|-|\nu|=1$ &\quad $|\sigma^\ast|-|\nu|=1$ 
&\quad $|\sigma^\ast|-|\nu|=2$
\end{tabular}
\end{center}
It is now easy to conclude that 
\begin{align*}
\tridown{\sigma/->,\nu/->,\varnothing/->}{\z}
=
(-1)^{|\sigma^\ast|-|\nu|}\prod_{i\in\sigma} z_i \prod_{i=n-k+1}^n z_i^{-1}
\end{align*}
if $\sigma^{*} \vartriangleright \nu$, and vanishes otherwise. This proves (3)(a).

\end{proof}

\subsection{Proof of main theorems for $n$ large enough}
\newcommand\bl[1]{\textrm{\color{blue}#1}}
\def\k{3}\def\n{5}\def\spc{0.5}\pgfmathtruncatemacro\kk{\n-\k}
\def\puzzlescale{1}
We now proceed with the proof of the various theorems, modulo a hypothesis on $n$ which will be made clear below.
We consider the following identity, obtained by repeated application of the Yang--Baxter equation (Prop.~\ref{prop:ybe}):
\begin{equation*}
\begin{tikzpicture}[scale=0.5,baseline=0]
\draw (-\n*0.25-\k*0.5,-\n*1.299) coordinate (A) -- ++(120:\n) coordinate (B) -- ++(60:2*\n) coordinate (C) -- ++(0:\k) coordinate (D) -- ++(-60:\n) coordinate (E) -- ++(-120:2*\n) coordinate (F) -- cycle;
\draw (B) -- ++(0:\k) coordinate (G) -- (D); \draw (F) -- (G);
\draw[->] (A) ++(210:1) ++(120:\spc) -- node[below left] {$\ss\lambda$} ++(120:\n-2*\spc);
\draw[->] (B) ++(150:1) ++(60:\spc) -- node[above left] {$\ss\mu$} ++ (60:\n-2*\spc);
\draw[decorate,decoration={brace,mirror}] (C) ++(150:1) ++(240:\spc) -- node[left=3mm,label={[label distance=3.5mm]180:$\ss n$},vertex=lightblue] {} ++(240:\n-2*\spc);
\draw[decorate,decoration=brace] (C) ++(90:1) ++(0:\spc) -- node[above=3mm,label={[label distance=1.5mm]125:$\ss k$},vertex=green] {} ++(0:\k-2*\spc);
\draw[->] (D) ++(30:1) ++(-60:\spc) -- node[above right] {$\ss\tilde\lambda$} ++(-60:\n-2*\spc); 
\draw[->] (E) ++(-30:1) ++(-120:\spc) -- node[below right] {$\ss\tilde\mu$} ++(-120:\n-2*\spc);
\draw[decorate,decoration={brace,mirror}] (F) ++(-30:1) ++(60:\spc) -- node[right=6mm,label={[label distance=2mm]0:$\ss n$},vertex=lightblue] {} ++(60:\n-2*\spc);
\draw[decorate,decoration=brace] (F) ++(-90:1) ++(180:\spc) -- node[below=3mm,label={[label distance=1.5mm]235:$\ss k$},vertex=green] {} ++(180:\k-2*\spc);
\draw[dotted] (B) ++(60:\n) -- ++(0:\k) -- ++(-60:\n);
\begin{scope}[rounded corners]
\draw[-latex,draw=gray] (A) ++(80:1.2) ++(120:2) -- node[left=-1.2mm] {$\ss x_1$} node[right=-0.5mm] {$\ss\ldots$} ++(120:1.5) -- ++(60:1.5);
\draw[-latex,draw=gray] (A) ++(80:1.2) ++(0:1.3) ++(120:2) -- node[right=-1.2mm] {$\ss x_k$} ++(120:1.5) -- ++(60:1.5);
\draw[-latex,draw=gray] (B) ++(30:0.9) ++(0:1) -- ++(0:1.5) -- node[above right=-1.3mm] {$\ss y_1$} ++(-60:1.5);
\path (B) ++(20:5) node[rotate=60] {$\ss\ldots$};
\draw[-latex,draw=gray] (B) ++(30:0.9) ++(60:4) ++(0:1) -- ++(0:1.5) -- node[below left=-1.6mm] {$\ss y_n$} ++(-60:1.5);
\draw[-latex,draw=gray] (B) ++(30:0.9) ++(60:5) ++(0:1) -- ++(0:1.5) -- node[above right=-1.2mm] {$\ss z_1$} ++(-60:1.5);
\path (B) ++(40:9) node[rotate=60] {$\ss\ldots$};
\draw[-latex,draw=gray] (B) ++(30:0.9) ++(60:9) ++(0:1) -- ++(0:1.5) -- node[below left=-1.6mm] {$\ss z_n$} ++(-60:1.5);
\draw[-latex,draw=gray] (D) ++(260:3.8) ++(-120:5) -- node[right=-0.8mm] {$\ss t_1$} ++(240:2.5) -- ++(180:1.5);
%\path (D) ++(260:3.8) ++(-120:5) ++(-60:1) ++(240:1) node[rotate=-60] {$\ss\ldots$};
\draw[-latex,draw=gray] (D) ++(260:3.8) ++(-120:5) ++(-60:2) -- node[right=-0.8mm] {$\ss t_n$} node[above left=0.5mm,rotate=-60] {$\ss\ldots$} ++(240:2.5) -- ++(180:1.5);
\end{scope}
\path (A) ++(75:2) node {\bl A}; %using [text=blue] creates a weird bug
\path (F) ++(90:4) node {\bl B}; %using [text=blue] creates a weird bug
\path (D) ++(-90:4) node {\bl C}; %using [text=blue] creates a weird bug
\path (C) ++(-75:2) node {\bl D}; %using [text=blue] creates a weird bug
\path (C) ++(240:\n)  ++(-75:2) node {\bl E}; %using [text=blue] creates a weird bug
\end{tikzpicture}
=
\begin{tikzpicture}[scale=-0.5,baseline=0]
\draw (-\n*0.25-\k*0.5,-\n*1.299) coordinate (A) -- ++(120:\n) coordinate (B) -- ++(60:2*\n) coordinate (C) -- ++(0:\k) coordinate (D) -- ++(-60:\n) coordinate (E) -- ++(-120:2*\n) coordinate (F) -- cycle;
\draw (B) -- ++(0:\k) coordinate (G) -- (D); \draw (F) -- (G);
\draw[->] (A) ++(210:1) ++(120:1) -- node[above right] {$\ss\tilde\lambda$} ++(120:\n-2);
\draw[->] (B) ++(150:1) ++(60:1) -- node[below right] {$\ss\tilde\mu$} ++ (60:\n-2);
\draw[decorate,decoration={brace,mirror}] (C) ++(150:1) ++(240:\spc) -- node[right=6mm,label={[label distance=2mm]0:$\ss n$},vertex=lightblue] {} ++(240:\n-2*\spc);
\draw[decorate,decoration=brace] (C) ++(90:1) ++(0:\spc) -- node[below=3mm,label={[label distance=1.5mm]235:$\ss k$},vertex=green] {} ++(0:\k-2*\spc);
\draw[->] (D) ++(30:1) ++(-60:\spc) -- node[below left] {$\ss\lambda$} ++(-60:\n-2*\spc); 
\draw[->] (E) ++(-30:1) ++(-120:\spc) -- node[above left] {$\ss\mu$} ++(-120:\n-2*\spc);
\draw[decorate,decoration={brace,mirror}] (F) ++(-30:1) ++(60:\spc) -- node[left=3mm,label={[label distance=3.5mm]180:$\ss n$},vertex=lightblue] {} ++(60:\n-2*\spc);
\draw[decorate,decoration=brace] (F) ++(-90:1) ++(180:\spc) -- node[above=3mm,label={[label distance=1.5mm]125:$\ss k$},vertex=green] {} ++(180:\k-2*\spc);
\draw[dotted] (B) ++(60:\n) -- ++(0:\k) -- ++(-60:\n);
\begin{scope}[rounded corners]
\draw[latex-,draw=gray] (A) ++(80:1.2) ++(120:2) -- node[right=-1.2mm] {$\ss x_k$} node[left=-0.5mm] {$\ss\ldots$} ++(120:1.5) -- ++(60:1.5);
\draw[latex-,draw=gray] (A) ++(80:1.2) ++(0:1.3) ++(120:2) -- node[left=-1.2mm] {$\ss x_1$} ++(120:1.5) -- ++(60:1.5);
\draw[latex-,draw=gray] (B) ++(30:0.9) ++(0:1) -- ++(0:1.5) -- node[below left=-1.6mm] {$\ss z_n$} ++(-60:1.5);
\path (B) ++(20:5) node[rotate=60] {$\ss\ldots$};
\draw[latex-,draw=gray] (B) ++(30:0.9) ++(60:4) ++(0:1) -- ++(0:1.5) -- node[above right=-1.6mm] {$\ss z_1$} ++(-60:1.5);
\draw[latex-,draw=gray] (B) ++(30:0.9) ++(60:5) ++(0:1) -- ++(0:1.5) -- node[below left=-1.2mm] {$\ss y_n$} ++(-60:1.5);
\path (B) ++(40:9) node[rotate=60] {$\ss\ldots$};
\draw[latex-,draw=gray] (B) ++(30:0.9) ++(60:9) ++(0:1) -- ++(0:1.5) -- node[above right=-1.2mm] {$\ss y_1$} ++(-60:1.5);
\draw[latex-,draw=gray] (D) ++(260:3.8) ++(-120:5) -- node[left=-0.8mm] {$\ss t_n$} ++(240:2.5) -- ++(180:1.5);
%\path (D) ++(260:3.8) ++(-120:5) ++(-60:1) ++(240:1) node[rotate=-60] {$\ss\ldots$};
\draw[latex-,draw=gray] (D) ++(260:3.8) ++(-120:5) ++(-60:2) -- node[left=-0.8mm] {$\ss t_1$} node[below right=0.5mm,rotate=-60] {$\ss\ldots$} ++(240:2.5) -- ++(180:1.5);
\end{scope}
\path (A) ++(75:2) node {\bl H}; %using [text=blue] creates a weird bug
\path (F) ++(90:4) node {\bl I}; %using [text=blue] creates a weird bug
\path (D) ++(-90:4) node {\bl J}; %using [text=blue] creates a weird bug
\path (C) ++(-75:2) node {\bl F}; %using [text=blue] creates a weird bug
\path (C) ++(240:\n)  ++(-75:2) node {\bl G}; %using [text=blue] creates a weird bug
\end{tikzpicture}
\end{equation*}
Here $\lambda,\tilde\lambda,\mu,\tilde\mu$ are four Young diagrams inside the $k\times (n-k)$ rectangle.

We now discuss the behavior of red and green lines inside these hexagons by subdividing them into regions, labelled
from \bl{A} to \bl{J}. Note that examples of configurations are given in Sect.~\ref{sec:firstcase}--\ref{sec:lastcase}.

There are four ``frozen'' regions, namely, \bl{A}, \bl{B}, \bl{H}, \bl{I}, in which by inspection, lines can only go
straight: green lines go north-west from the boundary in \bl{A} while red lines go east in \bl{A} and then north-east in \bl{B}; and the same up to a 180 degree rotation in \bl{H} and \bl{I}.

Next, there are four ``two-colour'' regions, namely \bl{D}, \bl{E}, \bl{F}, \bl{G}, in which only green lines occur.
As in \cite{z-j}, one needs to carefully analyse regions \bl{D} and \bl{F}. Let us consider for example \bl{F}
(\bl{D} can be treated identically by rotating the argument 180 degrees). We want to make sure that the green lines
starting at the bottom exit through the left side of \bl{F}. The rightmost green line must end at location $k+w(\mu)$,
if we number the north-west boundary from bottom to top.
This means that the location (counted from bottom to top) of the rightmost green line when it crosses
the diagonal line starting at the bottom of the junction of \bl{J} and \bl{F} cannot exceed $k+w(\mu)$ 
plus the number of steps to the left it can make in region \bl{J}. Now there are two types of such steps: 
either (i) without using
the $K$-tile, which means the red lines go straight through that green line; but only the $w(\lambda)+h(\tilde\lambda)$ bottommost red lines are allowed to make
such straight crossings, because of their starting/endpoints; or (ii) using the $K$-tile, which is innocuous for
red particles; however, each $K$-tile is always surrounded by the same three neighboring triangles,
namely $\vcenter{\hbox{\def\size{2}\tikz[scale=-0.5]{\puzzle{{4,{0,0},0},{8,{0,0},1},{2,{0,1},0},{3,{1,0},0}}}}}$,
so that a given green line can cross at most $n/2$ such tiles in region \bl{J}.
The result is that the location in question is at most $k+w(\mu)+w(\lambda)+h(\tilde\lambda)+n/2$. Let us therefore
assume in what follows that $n\geq 2(k+w(\mu)+w(\lambda)+h(\tilde\lambda))$ (we do not claim this bound to be optimal, but any
bound is sufficient for our purposes). Then this location is less or equal to $n$,
and the rightmost green line (and therefore, all others) exit through the left side of \bl{F}.

We conclude that regions
\bl{D}, \bl{E}, \bl{F} and \bl{G} are exactly of the type considered in Sect.~\ref{sec:red2}, \ie,
without any red lines and with boundary conditions of the form of \eqref{eq:red2}.
Therefore, the summation over configurations of these regions produce four Grothendieck polynomials.

Finally, putting everything together, and using the graphical notation that was defined at the beginning
of this section for regions \bl{C} and \bl{J}, we find the ``master identity''
\begin{equation}\label{eq:master}
\sum_\nu \loz{\nu/->,\tilde\lambda/->,\tilde\mu/->,\lambda/->}{\t}{\z} G_\nu(\x;\z) G^\mu(\x;\y)
=
\sum_{\tilde\nu} \loz{\mu/->,\tilde\lambda/->,\tilde\nu/->,\lambda/->}{\t}{\y} G_{\tilde\mu{}^\ast}(\x;\z) G^{\tilde\nu{}^\ast}(\x;\y)
\end{equation}
which is valid for $n$ large enough at fixed $k,\lambda,\mu,\tilde\lambda,\tilde\mu$.

The proofs now consist of specializing this identity to four cases:

%\begin{enumerate}
%(1)=fig 13
%\item $\tilde\lambda=\tilde\mu=\varnothing$, $\t=\z$.
\begin{samepage}
\subsubsection{$\tilde\lambda=\tilde\mu=\varnothing$, $\t=\z$.}\label{sec:firstcase}
\begin{center}
\begin{tikzpicture}[scale=0.4,baseline=-3.2cm]
\begin{scope}[puz]
\rawpuzzle{{0, {0, 2}, 1}, {0, {0, 3}, 0}, {0, {0, 3}, 1}, {0, {0, 4}, 
  0}, {0, {0, 4}, 1}, {0, {0, 5}, 0}, {2, {0, 5}, 1}, {2, {0, 6}, 
  0}, {2, {0, 6}, 1}, {2, {0, 7}, 0}, {0, {0, 7}, 1}, {0, {1, 1}, 
  1}, {0, {1, 2}, 0}, {5, {1, 2}, 1}, {4, {1, 3}, 0}, {4, {1, 3}, 
  1}, {4, {1, 4}, 0}, {4, {1, 4}, 1}, {1, {1, 5}, 0}, {1, {1, 5}, 
  1}, {1, {1, 6}, 0}, {3, {1, 6}, 1}, {0, {1, 7}, 0}, {0, {1, 7}, 
  1}, {0, {2, 0}, 1}, {0, {2, 1}, 0}, {5, {2, 1}, 1}, {5, {2, 2}, 
  0}, {0, {2, 2}, 1}, {0, {2, 3}, 0}, {0, {2, 3}, 1}, {0, {2, 4}, 
  0}, {2, {2, 4}, 1}, {2, {2, 5}, 0}, {0, {2, 5}, 1}, {3, {2, 6}, 
  0}, {3, {2, 6}, 1}, {0, {2, 7}, 0}, {0, {2, 7}, 1}, {4, {3, 0}, 
  0}, {4, {3, 0}, 1}, {5, {3, 1}, 0}, {5, {3, 1}, 1}, {4, {3, 2}, 
  0}, {4, {3, 2}, 1}, {4, {3, 3}, 0}, {4, {3, 3}, 1}, {1, {3, 4}, 
  0}, {3, {3, 4}, 1}, {0, {3, 5}, 0}, {0, {3, 5}, 1}, {3, {3, 6}, 
  0}, {1, {3, 6}, 1}, {4, {3, 7}, 0}, {4, {3, 7}, 1}, {4, {4, 0}, 
  0}, {4, {4, 0}, 1}, {5, {4, 1}, 0}, {0, {4, 1}, 1}, {0, {4, 2}, 0},
  {0, {4, 2}, 1}, {0, {4, 3}, 0}, {2, {4, 3}, 1}, {8, {4, 4}, 
  0}, {4, {4, 4}, 1}, {4, {4, 5}, 0}, {4, {4, 5}, 1}, {1, {4, 6}, 
  0}, {1, {4, 6}, 1}, {4, {4, 7}, 0}, {4, {4, 7}, 1}, {4, {5, 0}, 
  0}, {4, {5, 0}, 1}, {4, {5, 1}, 0}, {4, {5, 1}, 1}, {4, {5, 2}, 
  0}, {4, {5, 2}, 1}, {1, {5, 3}, 0}, {1, {5, 3}, 1}, {4, {5, 4}, 
  0}, {4, {5, 4}, 1}, {4, {5, 5}, 0}, {4, {5, 5}, 1}, {1, {5, 6}, 
  0}, {1, {5, 6}, 1}, {4, {5, 7}, 0}, {4, {5, 7}, 1}, {4, {6, 0}, 
  0}, {4, {6, 0}, 1}, {4, {6, 1}, 0}, {4, {6, 1}, 1}, {4, {6, 2}, 
  0}, {4, {6, 2}, 1}, {1, {6, 3}, 0}, {1, {6, 3}, 1}, {4, {6, 4}, 
  0}, {4, {6, 4}, 1}, {4, {6, 5}, 0}, {4, {6, 5}, 1}, {1, {6, 6}, 
  0}, {1, {6, 6}, 1}, {4, {6, 7}, 0}, {4, {6, 7}, 1}, {4, {7, 0}, 
  0}, {4, {7, 0}, 1}, {4, {7, 1}, 0}, {4, {7, 1}, 1}, {4, {7, 2}, 
  0}, {4, {7, 2}, 1}, {1, {7, 3}, 0}, {1, {7, 3}, 1}, {4, {7, 4}, 
  0}, {4, {7, 4}, 1}, {4, {7, 5}, 0}, {4, {7, 5}, 1}, {1, {7, 6}, 
  0}, {1, {7, 6}, 1}, {4, {7, 7}, 0}, {4, {7, 7}, 1}, {4, {8, 0}, 0}, 
 {4, {8, 0}, 1}, {4, {8, 1}, 0}, {4, {8, 1}, 1}, {4, {8, 2}, 
  0}, {4, {8, 2}, 1}, {1, {8, 3}, 0}, {1, {8, 3}, 1}, {4, {8, 4}, 
  0}, {4, {8, 4}, 1}, {4, {8, 5}, 0}, {4, {8, 5}, 1}, {1, {8, 6}, 
  0}, {1, {8, 6}, 1}, {4, {8, 7}, 0}, {4, {8, 7}, 1}, {0, {9, 0}, 
  0}, {0, {9, 0}, 1}, {0, {9, 1}, 0}, {0, {9, 1}, 1}, {0, {9, 2}, 
  0}, {5, {9, 2}, 1}, {1, {9, 3}, 0}, {1, {9, 3}, 1}, {4, {9, 4}, 
  0}, {4, {9, 4}, 1}, {4, {9, 5}, 0}, {4, {9, 5}, 1}, {1, {9, 6}, 
  0}, {1, {9, 6}, 1}, {4, {9, 7}, 0}, {4, {9, 7}, 1}, {0, {10, 0}, 
  0}, {5, {10, 0}, 1}, {4, {10, 1}, 0}, {4, {10, 1}, 1}, {5, {10, 2}, 
  0}, {2, {10, 2}, 1}, {2, {10, 3}, 0}, {0, {10, 3}, 1}, {0, {10, 4}, 
  0}, {0, {10, 4}, 1}, {0, {10, 5}, 0}, {2, {10, 5}, 1}, {2, {10, 6}, 
  0}, {0, {10, 6}, 1}, {0, {10, 7}, 0}, {5, {11, 0}, 0}, {0, {11, 0}, 
  1}, {0, {11, 1}, 0}, {2, {11, 1}, 1}, {2, {11, 2}, 0}, {0, {11, 2}, 
  1}, {0, {11, 3}, 0}, {0, {11, 3}, 1}, {0, {11, 4}, 0}, {2, {11, 4}, 
  1}, {2, {11, 5}, 0}, {0, {11, 5}, 1}, {0, {11, 6}, 0}, {0, {12, 0}, 
  0}, {2, {12, 0}, 1}, {2, {12, 1}, 0}, {0, {12, 1}, 1}, {0, {12, 2}, 
  0}, {0, {12, 2}, 1}, {0, {12, 3}, 0}, {2, {12, 3}, 1}, {2, {12, 4}, 
  0}, {0, {12, 4}, 1}, {0, {12, 5}, 0}}
\end{scope}
\draw (240:\k) coordinate (C) -- ++(0:\k) coordinate (D) -- ++(-60:\n) coordinate (E) -- ++(-120:2*\n) coordinate (F) -- ++(180:\k) coordinate (A) -- ++(120:\n) coordinate (B) -- cycle;
\draw (B) -- ++(0:\k) coordinate (G) -- (D); \draw (F) -- (G);
\foreach\x in {1,...,\k} \path (C) ++(0:\x-0.5) node[vertex=green] {};
\foreach\x in {1,...,\k} \path (A) ++(0:\x-0.5) node[vertex=green] {};
\foreach\x in {1,...,\n} \path (C) ++(240:\x-0.5) node[vertex=lightblue] {};
\foreach\x in {1,...,\n} \path (F) ++(60:\x-0.5) node[vertex=lightblue] {};
\foreach\x in {1,...,\k} \path (D) ++(-60:\x-0.5) node[vertex=lightblue] {};
\foreach\x in {1,...,\kk} \path (E) ++(120:\x-0.5) node[vertex=red] {};
\foreach\x in {1,...,\k} \path (E) ++(-120:\x-0.5) node[vertex=green] {};
\foreach\x in {1,...,\kk} \path (E) ++(-120:\x+\k-0.5) node[vertex=lightblue] {};
\foreach\x/\c in {1/lightblue,2/red,3/lightblue,4/lightblue,5/red} \path (A) ++(120:\x-0.5) node[vertex=\c] {}; %lambda
\foreach\x/\c in {1/lightblue,2/green,3/green,4/lightblue,5/green} \path (C) ++(240:\x+\n-0.5) node[vertex=\c] {}; %mu*
\draw[dotted] (B) ++(60:\n) -- ++(0:\k) -- ++(-60:\n);
\end{tikzpicture}
\ 
\begin{tikzpicture}[scale=0.4,baseline=-3.2cm]
\begin{scope}[puz]
\rawpuzzle{{0, {0, 2}, 1}, {0, {0, 3}, 0}, {0, {0, 3}, 1}, {0, {0, 4}, 
  0}, {0, {0, 4}, 1}, {0, {0, 5}, 0}, {2, {0, 5}, 1}, {2, {0, 6}, 
  0}, {2, {0, 6}, 1}, {2, {0, 7}, 0}, {0, {0, 7}, 1}, {0, {1, 1}, 
  1}, {0, {1, 2}, 0}, {0, {1, 2}, 1}, {0, {1, 3}, 0}, {0, {1, 3}, 
  1}, {0, {1, 4}, 0}, {2, {1, 4}, 1}, {2, {1, 5}, 0}, {2, {1, 5}, 
  1}, {2, {1, 6}, 0}, {0, {1, 6}, 1}, {0, {1, 7}, 0}, {0, {1, 7}, 
  1}, {0, {2, 0}, 1}, {0, {2, 1}, 0}, {0, {2, 1}, 1}, {0, {2, 2}, 
  0}, {0, {2, 2}, 1}, {0, {2, 3}, 0}, {2, {2, 3}, 1}, {2, {2, 4}, 
  0}, {2, {2, 4}, 1}, {2, {2, 5}, 0}, {0, {2, 5}, 1}, {0, {2, 6}, 
  0}, {0, {2, 6}, 1}, {0, {2, 7}, 0}, {0, {2, 7}, 1}, {4, {3, 0}, 
  0}, {4, {3, 0}, 1}, {4, {3, 1}, 0}, {4, {3, 1}, 1}, {4, {3, 2}, 
  0}, {4, {3, 2}, 1}, {1, {3, 3}, 0}, {1, {3, 3}, 1}, {1, {3, 4}, 
  0}, {1, {3, 4}, 1}, {4, {3, 5}, 0}, {4, {3, 5}, 1}, {4, {3, 6}, 
  0}, {4, {3, 6}, 1}, {4, {3, 7}, 0}, {4, {3, 7}, 1}, {4, {4, 0}, 
  0}, {4, {4, 0}, 1}, {4, {4, 1}, 0}, {4, {4, 1}, 1}, {4, {4, 2}, 0},
  {4, {4, 2}, 1}, {1, {4, 3}, 0}, {1, {4, 3}, 1}, {1, {4, 4}, 
  0}, {1, {4, 4}, 1}, {4, {4, 5}, 0}, {4, {4, 5}, 1}, {4, {4, 6}, 
  0}, {4, {4, 6}, 1}, {4, {4, 7}, 0}, {4, {4, 7}, 1}, {4, {5, 0}, 
  0}, {4, {5, 0}, 1}, {4, {5, 1}, 0}, {4, {5, 1}, 1}, {4, {5, 2}, 
  0}, {4, {5, 2}, 1}, {1, {5, 3}, 0}, {1, {5, 3}, 1}, {1, {5, 4}, 
  0}, {1, {5, 4}, 1}, {4, {5, 5}, 0}, {4, {5, 5}, 1}, {4, {5, 6}, 
  0}, {4, {5, 6}, 1}, {4, {5, 7}, 0}, {4, {5, 7}, 1}, {4, {6, 0}, 
  0}, {4, {6, 0}, 1}, {4, {6, 1}, 0}, {4, {6, 1}, 1}, {4, {6, 2}, 
  0}, {4, {6, 2}, 1}, {1, {6, 3}, 0}, {1, {6, 3}, 1}, {1, {6, 4}, 
  0}, {1, {6, 4}, 1}, {4, {6, 5}, 0}, {4, {6, 5}, 1}, {4, {6, 6}, 
  0}, {4, {6, 6}, 1}, {4, {6, 7}, 0}, {4, {6, 7}, 1}, {4, {7, 0}, 
  0}, {4, {7, 0}, 1}, {4, {7, 1}, 0}, {4, {7, 1}, 1}, {4, {7, 2}, 
  0}, {4, {7, 2}, 1}, {1, {7, 3}, 0}, {1, {7, 3}, 1}, {1, {7, 4}, 
  0}, {1, {7, 4}, 1}, {4, {7, 5}, 0}, {4, {7, 5}, 1}, {4, {7, 6}, 
  0}, {4, {7, 6}, 1}, {4, {7, 7}, 0}, {4, {7, 7}, 1}, {4, {8, 0}, 0}, 
 {4, {8, 0}, 1}, {4, {8, 1}, 0}, {4, {8, 1}, 1}, {4, {8, 2}, 
  0}, {4, {8, 2}, 1}, {1, {8, 3}, 0}, {3, {8, 3}, 1}, {7, {8, 4}, 
  0}, {7, {8, 4}, 1}, {0, {8, 5}, 0}, {0, {8, 5}, 1}, {0, {8, 6}, 
  0}, {0, {8, 6}, 1}, {0, {8, 7}, 0}, {5, {8, 7}, 1}, {0, {9, 0}, 
  0}, {0, {9, 0}, 1}, {0, {9, 1}, 0}, {0, {9, 1}, 1}, {0, {9, 2}, 
  0}, {2, {9, 2}, 1}, {8, {9, 3}, 0}, {4, {9, 3}, 1}, {1, {9, 4}, 
  0}, {1, {9, 4}, 1}, {4, {9, 5}, 0}, {4, {9, 5}, 1}, {4, {9, 6}, 
  0}, {4, {9, 6}, 1}, {5, {9, 7}, 0}, {5, {9, 7}, 1}, {0, {10, 0}, 
  0}, {0, {10, 0}, 1}, {0, {10, 1}, 0}, {2, {10, 1}, 1}, {2, {10, 2}, 
  0}, {0, {10, 2}, 1}, {0, {10, 3}, 0}, {2, {10, 3}, 1}, {2, {10, 4}, 
  0}, {0, {10, 4}, 1}, {0, {10, 5}, 0}, {0, {10, 5}, 1}, {0, {10, 6}, 
  0}, {5, {10, 6}, 1}, {5, {10, 7}, 0}, {4, {11, 0}, 0}, {4, {11, 0}, 
  1}, {1, {11, 1}, 0}, {3, {11, 1}, 1}, {0, {11, 2}, 0}, {0, {11, 2}, 
  1}, {7, {11, 3}, 0}, {7, {11, 3}, 1}, {0, {11, 4}, 0}, {0, {11, 4}, 
  1}, {0, {11, 5}, 0}, {5, {11, 5}, 1}, {5, {11, 6}, 0}, {0, {12, 0}, 
  0}, {2, {12, 0}, 1}, {8, {12, 1}, 0}, {4, {12, 1}, 1}, {4, {12, 2}, 
  0}, {4, {12, 2}, 1}, {1, {12, 3}, 0}, {1, {12, 3}, 1}, {4, {12, 4}, 
  0}, {4, {12, 4}, 1}, {5, {12, 5}, 0}}
\end{scope}
\draw (240:\k) coordinate (C) -- ++(0:\k) coordinate (D) -- ++(-60:\n) coordinate (E) -- ++(-120:2*\n) coordinate (F) -- ++(180:\k) coordinate (A) -- ++(120:\n) coordinate (B) -- cycle;
\draw (A) -- ++(60:2*\n) coordinate (G) -- (C); \draw (E) -- (G);
\foreach\x in {1,...,\k} \path (C) ++(0:\x-0.5) node[vertex=green] {};
\foreach\x in {1,...,\k} \path (A) ++(0:\x-0.5) node[vertex=green] {};
\foreach\x in {1,...,\n} \path (C) ++(240:\x-0.5) node[vertex=lightblue] {};
\foreach\x in {1,...,\n} \path (F) ++(60:\x-0.5) node[vertex=lightblue] {};
\foreach\x in {1,...,\k} \path (D) ++(-60:\x-0.5) node[vertex=lightblue] {};
\foreach\x in {1,...,\kk} \path (E) ++(120:\x-0.5) node[vertex=red] {};
\foreach\x in {1,...,\k} \path (E) ++(-120:\x-0.5) node[vertex=green] {};
\foreach\x in {1,...,\kk} \path (E) ++(-120:\x+\k-0.5) node[vertex=lightblue] {};
\foreach\x/\c in {1/lightblue,2/red,3/lightblue,4/lightblue,5/red} \path (A) ++(120:\x-0.5) node[vertex=\c] {}; %lambda
\foreach\x/\c in {1/lightblue,2/green,3/green,4/lightblue,5/green} \path (C) ++(240:\x+\n-0.5) node[vertex=\c] {}; %mu*
\draw[dotted] (B) ++(60:\n) -- ++(-60:\n) -- ++(0:\k);
\end{tikzpicture}
\end{center}
\end{samepage}

We rewrite the identity \eqref{eq:master}:
\begin{equation}\label{eq:1}
\sum_\nu \loz{\nu/->,\varnothing/->,\varnothing/->,\lambda/->}{\z}{\z} G_\nu(\x;\z) G^\mu(\x;\y)
=
\sum_{\tilde\nu} \loz{\mu/->,\varnothing/->,\tilde\nu/->,\lambda/->}{\z}{\y} G_{\varnothing^\ast}(\x;\z) G^{\tilde\nu{}^\ast}(\x;\y).
\end{equation}
In the l.h.s., one can apply Lemma~\ref{lem:split}:
\begin{equation}\label{eq:1b}
\sum_{\nu:\, \nu^\ast \vartriangleright \lambda} (-1)^{|\nu^\ast|-|\lambda|} \prod_{i\in\nu} z_i
\prod_{i=n-k+1}^n z_i^{-1}\,
G_\nu(\x;\z) G^\mu(\x;\y)
=
\sum_{\tilde\nu} \loz{\mu/->,\varnothing/->,\tilde\nu/->,\lambda/->}{\z}{\y} G_{\varnothing^\ast}(\x;\z) G^{\tilde\nu{}^\ast}(\x;\y).
\end{equation}
The sum in the l.h.s.\ is actually known (see \cite{Lenart}, for the non-equivariant case); we can evaluate it in the present framework
by first considering \eqref{eq:1} at the further specialization 
$\lambda=\varnothing$, $\y=\z$. In the l.h.s., using once more Lemma~\ref{lem:split}, it is easy to see
that only two configurations contribute to the puzzle, namely
\begin{center}
\def\size{6}
\begin{tikzpicture}[scale=0.5]
\doublepuzzle{{0, {0, 0}, 0}, {0, {0, 0}, 1}, {0, {0, 1}, 0}, {0, {0, 1}, 
  1}, {0, {0, 2}, 0}, {2, {0, 2}, 1}, {2, {0, 3}, 0}, {2, {0, 3}, 
  1}, {2, {0, 4}, 0}, {2, {0, 4}, 1}, {2, {0, 5}, 0}, {0, {0, 5}, 
  1}, {0, {1, 0}, 0}, {0, {1, 0}, 1}, {0, {1, 1}, 0}, {2, {1, 1}, 
  1}, {2, {1, 2}, 0}, {2, {1, 2}, 1}, {2, {1, 3}, 0}, {2, {1, 3}, 
  1}, {2, {1, 4}, 0}, {0, {1, 4}, 1}, {0, {1, 5}, 0}, {0, {1, 5}, 
  1}, {0, {2, 0}, 0}, {2, {2, 0}, 1}, {2, {2, 1}, 0}, {2, {2, 1}, 
  1}, {2, {2, 2}, 0}, {2, {2, 2}, 1}, {2, {2, 3}, 0}, {0, {2, 3}, 
  1}, {0, {2, 4}, 0}, {0, {2, 4}, 1}, {0, {2, 5}, 0}, {0, {2, 5}, 
  1}, {1, {3, 0}, 0}, {1, {3, 0}, 1}, {1, {3, 1}, 0}, {1, {3, 1}, 
  1}, {1, {3, 2}, 0}, {1, {3, 2}, 1}, {4, {3, 3}, 0}, {4, {3, 3}, 
  1}, {4, {3, 4}, 0}, {4, {3, 4}, 1}, {4, {3, 5}, 0}, {4, {3, 5}, 
  1}, {1, {4, 0}, 0}, {1, {4, 0}, 1}, {1, {4, 1}, 0}, {1, {4, 1}, 
  1}, {1, {4, 2}, 0}, {1, {4, 2}, 1}, {4, {4, 3}, 0}, {4, {4, 3}, 
  1}, {4, {4, 4}, 0}, {4, {4, 4}, 1}, {4, {4, 5}, 0}, {4, {4, 5}, 1},
  {1, {5, 0}, 0}, {1, {5, 0}, 1}, {1, {5, 1}, 0}, {1, {5, 1}, 
  1}, {1, {5, 2}, 0}, {1, {5, 2}, 1}, {4, {5, 3}, 0}, {4, {5, 3}, 
  1}, {4, {5, 4}, 0}, {4, {5, 4}, 1}, {4, {5, 5}, 0}, {4, {5, 5}, 1}}
\end{tikzpicture}
\qquad
\begin{tikzpicture}[scale=0.5]
\doublepuzzle{{0, {0, 0}, 0}, {0, {0, 0}, 1}, {0, {0, 1}, 0}, {0, {0, 1}, 
  1}, {0, {0, 2}, 0}, {2, {0, 2}, 1}, {2, {0, 3}, 0}, {2, {0, 3}, 
  1}, {2, {0, 4}, 0}, {2, {0, 4}, 1}, {2, {0, 5}, 0}, {0, {0, 5}, 
  1}, {0, {1, 0}, 0}, {0, {1, 0}, 1}, {0, {1, 1}, 0}, {2, {1, 1}, 
  1}, {2, {1, 2}, 0}, {2, {1, 2}, 1}, {2, {1, 3}, 0}, {2, {1, 3}, 
  1}, {2, {1, 4}, 0}, {0, {1, 4}, 1}, {0, {1, 5}, 0}, {0, {1, 5}, 
  1}, {4, {2, 0}, 0}, {4, {2, 0}, 1}, {1, {2, 1}, 0}, {1, {2, 1}, 
  1}, {1, {2, 2}, 0}, {1, {2, 2}, 1}, {1, {2, 3}, 0}, {3, {2, 3}, 
  1}, {0, {2, 4}, 0}, {0, {2, 4}, 1}, {0, {2, 5}, 0}, {0, {2, 5}, 
  1}, {0, {3, 0}, 0}, {2, {3, 0}, 1}, {2, {3, 1}, 0}, {2, {3, 1}, 
  1}, {2, {3, 2}, 0}, {2, {3, 2}, 1}, {8, {3, 3}, 0}, {4, {3, 3}, 
  1}, {4, {3, 4}, 0}, {4, {3, 4}, 1}, {4, {3, 5}, 0}, {4, {3, 5}, 
  1}, {1, {4, 0}, 0}, {1, {4, 0}, 1}, {1, {4, 1}, 0}, {1, {4, 1}, 
  1}, {1, {4, 2}, 0}, {1, {4, 2}, 1}, {4, {4, 3}, 0}, {4, {4, 3}, 
  1}, {4, {4, 4}, 0}, {4, {4, 4}, 1}, {4, {4, 5}, 0}, {4, {4, 5}, 1},
  {1, {5, 0}, 0}, {1, {5, 0}, 1}, {1, {5, 1}, 0}, {1, {5, 1}, 
  1}, {1, {5, 2}, 0}, {1, {5, 2}, 1}, {4, {5, 3}, 0}, {4, {5, 3}, 
  1}, {4, {5, 4}, 0}, {4, {5, 4}, 1}, {4, {5, 5}, 0}, {4, {5, 5}, 1}}
\end{tikzpicture}
\end{center}
corresponding respectively to $\nu=\varnothing^\ast=k\times(n-k)$ and $\nu=\fund^\ast$.
Using $G^\varnothing(\x;\z)=1$ and $G^{\stableau{\\}}(\x;\z)=1-\prod_{i=1}^k x_i/z_i$,
the sum
$\sum_{\nu:\, \nu^\ast \vartriangleright \varnothing} (-1)^{|\nu^\ast|} \prod_{i\in\nu} z_i\, G_\nu(\x;\z)$
then recombines to $\prod_{i=1}^k x_i^2 \prod_{i=n-k+1}^n z_i^{-1}$.

In the r.h.s., we can also apply Lemma~\ref{lem:split}; we then obtain
\[
\prod_{i=1}^k x_i^2
\prod_{i=n-k+1}^n z_i^{-2}\, G^\mu(\x;\z)=
\sum_{\tilde\nu:\, \tilde\nu\vartriangleleft\mu^\ast} 
(-1)^{|\mu^\ast|-|\tilde\nu|} \prod_{i=1}^k x_i \prod_{i=n-k+1}^n z_i^{-2} \prod_{i\in\mu} z_i 
\,G^{\tilde\nu{}^\ast}(\x;\z).
\]
Simplifying and renaming $\tilde\nu{}^\ast$ to $\nu$, we find the desired identity
\begin{equation}\label{eq:id}
\sum_{\nu:\, \nu\vartriangleright\mu} 
(-1)^{|\nu|-|\mu|} G^{\nu}(\x;\z)
=
\frac{\prod_{i=1}^k x_i}
{\prod_{i\in\mu} z_i}
G^\mu(\x;\z)=G_{\mu^\ast}(\x;\zz)
\end{equation}
which interprets nicely part (3) of Lemma~\ref{lem:split}.

Now we go back to the general case of \eqref{eq:1b};
using the identity \eqref{eq:id}, we obtain
\begin{equation}\label{eq:preMSalt}
G_{\lambda^\ast}(\x;\z) G^\mu(\x;\y)
=
\sum_{\tilde\nu} \loz{\mu/->,\varnothing/->,\tilde\nu/->,\lambda/->}{\z}{\y} 
G^{\tilde\nu{}^\ast}(\x;\y).
\end{equation}
This is our first nontrivial identity, showing how to expand on the basis
of Grothendieck polynomials
the product of a Grothendieck polynomial and a dual Grothendieck polynomial, the latter
having a different secondary alphabet.

We can further rewrite this identity by lowering all the indices; we easily find,
after subtitutions $\lambda\to\lambda^\ast$, $\mu\to\mu^\ast$, $\y\to\yy$:
\[
G_{\lambda}(\x;\z)
G_{\mu}(\x;\y)
=
\sum_{\tilde\nu} 
\prod_{i\in\mu} y_i^{-1}
\loz{\mu/<-,\varnothing/->,\tilde\nu/->,\lambda/<-}{\z}{\yy} 
\prod_{i\in\tilde\nu} y_i
\,
G_{\tilde\nu}(\x;\y).
\]
% Let us absorb the extra factors into the weights, defining:
% \[
% \loz{\mu/<-,\varnothing/->,\tilde\nu/->,\lambda/<-}{\z}{\yy}|_{mod}
% =
% \prod_{i\in\mu} y_i^{-1}
% \loz{\mu/<-,\varnothing/->,\tilde\nu/->,\lambda/<-}{\z}{\yy} 
% \prod_{i\in\tilde\nu} y_i
% \]
This is very similar to Theorem~\ref{thm:MSalt}, the extra factors being manifestly a conjugation 
that should be absorbed in the weights of the puzzle.
Indeed, we perform the following transformation: for each elementary lozenge whose weight depends on
$w=y_i/z_j$, as is the case above, we give an extra
weight $y_i$ (resp.\ $y_i^{-1}$) to NW (resp.\ SE) empty edges;
and an extra weight $z_j$ (resp.\ $z_j^{-1}$) to NE (resp.\ SW) edges with both red and green lines.
This turns the weights \eqref{eq:wei} into the modified weights \eqref{eq:modwei},
and the equation above becomes exactly the statement of Theorem~\ref{thm:MSalt}.

Next we impose $\y=\zz$; one more application of Lemma~\ref{lem:split} directly leads to Theorem~\ref{thm:equivdualalt}.

%(2)=fig 13 180 deg
%\item $\lambda=\mu=\varnothing$, $\t=\y$.
\begin{samepage}
\subsubsection{$\lambda=\mu=\varnothing$, $\t=\y$.}
\begin{center}
\begin{tikzpicture}[scale=0.4,baseline=-3.2cm]
\begin{scope}[puz]
\rawpuzzle{{0, {0, 2}, 1}, {0, {0, 3}, 0}, {0, {0, 3}, 1}, {0, {0, 4}, 
  0}, {0, {0, 4}, 1}, {7, {0, 5}, 0}, {7, {0, 5}, 1}, {0, {0, 6}, 
  0}, {2, {0, 6}, 1}, {2, {0, 7}, 0}, {0, {0, 7}, 1}, {0, {1, 1}, 
  1}, {0, {1, 2}, 0}, {5, {1, 2}, 1}, {4, {1, 3}, 0}, {4, {1, 3}, 
  1}, {4, {1, 4}, 0}, {4, {1, 4}, 1}, {1, {1, 5}, 0}, {1, {1, 5}, 
  1}, {1, {1, 6}, 0}, {1, {1, 6}, 1}, {4, {1, 7}, 0}, {4, {1, 7}, 
  1}, {5, {2, 0}, 1}, {4, {2, 1}, 0}, {4, {2, 1}, 1}, {5, {2, 2}, 
  0}, {0, {2, 2}, 1}, {0, {2, 3}, 0}, {0, {2, 3}, 1}, {0, {2, 4}, 
  0}, {2, {2, 4}, 1}, {2, {2, 5}, 0}, {2, {2, 5}, 1}, {2, {2, 6}, 
  0}, {0, {2, 6}, 1}, {0, {2, 7}, 0}, {0, {2, 7}, 1}, {5, {3, 0}, 
  0}, {5, {3, 0}, 1}, {4, {3, 1}, 0}, {4, {3, 1}, 1}, {4, {3, 2}, 
  0}, {4, {3, 2}, 1}, {4, {3, 3}, 0}, {4, {3, 3}, 1}, {1, {3, 4}, 
  0}, {1, {3, 4}, 1}, {1, {3, 5}, 0}, {3, {3, 5}, 1}, {0, {3, 6}, 
  0}, {0, {3, 6}, 1}, {0, {3, 7}, 0}, {0, {3, 7}, 1}, {5, {4, 0}, 
  0}, {0, {4, 0}, 1}, {0, {4, 1}, 0}, {0, {4, 1}, 1}, {0, {4, 2}, 0},
  {0, {4, 2}, 1}, {0, {4, 3}, 0}, {2, {4, 3}, 1}, {2, {4, 4}, 
  0}, {2, {4, 4}, 1}, {8, {4, 5}, 0}, {4, {4, 5}, 1}, {4, {4, 6}, 
  0}, {4, {4, 6}, 1}, {4, {4, 7}, 0}, {4, {4, 7}, 1}, {4, {5, 0}, 
  0}, {4, {5, 0}, 1}, {4, {5, 1}, 0}, {4, {5, 1}, 1}, {4, {5, 2}, 
  0}, {4, {5, 2}, 1}, {1, {5, 3}, 0}, {1, {5, 3}, 1}, {1, {5, 4}, 
  0}, {1, {5, 4}, 1}, {4, {5, 5}, 0}, {4, {5, 5}, 1}, {4, {5, 6}, 
  0}, {4, {5, 6}, 1}, {4, {5, 7}, 0}, {4, {5, 7}, 1}, {4, {6, 0}, 
  0}, {4, {6, 0}, 1}, {4, {6, 1}, 0}, {4, {6, 1}, 1}, {4, {6, 2}, 
  0}, {4, {6, 2}, 1}, {1, {6, 3}, 0}, {1, {6, 3}, 1}, {1, {6, 4}, 
  0}, {1, {6, 4}, 1}, {4, {6, 5}, 0}, {4, {6, 5}, 1}, {4, {6, 6}, 
  0}, {4, {6, 6}, 1}, {4, {6, 7}, 0}, {4, {6, 7}, 1}, {4, {7, 0}, 
  0}, {4, {7, 0}, 1}, {4, {7, 1}, 0}, {4, {7, 1}, 1}, {4, {7, 2}, 
  0}, {4, {7, 2}, 1}, {1, {7, 3}, 0}, {1, {7, 3}, 1}, {1, {7, 4}, 
  0}, {1, {7, 4}, 1}, {4, {7, 5}, 0}, {4, {7, 5}, 1}, {4, {7, 6}, 
  0}, {4, {7, 6}, 1}, {4, {7, 7}, 0}, {4, {7, 7}, 1}, {4, {8, 0}, 0}, 
 {4, {8, 0}, 1}, {4, {8, 1}, 0}, {4, {8, 1}, 1}, {4, {8, 2}, 
  0}, {4, {8, 2}, 1}, {1, {8, 3}, 0}, {1, {8, 3}, 1}, {1, {8, 4}, 
  0}, {1, {8, 4}, 1}, {4, {8, 5}, 0}, {4, {8, 5}, 1}, {4, {8, 6}, 
  0}, {4, {8, 6}, 1}, {4, {8, 7}, 0}, {4, {8, 7}, 1}, {4, {9, 0}, 
  0}, {4, {9, 0}, 1}, {4, {9, 1}, 0}, {4, {9, 1}, 1}, {4, {9, 2}, 
  0}, {4, {9, 2}, 1}, {1, {9, 3}, 0}, {1, {9, 3}, 1}, {1, {9, 4}, 
  0}, {1, {9, 4}, 1}, {4, {9, 5}, 0}, {4, {9, 5}, 1}, {4, {9, 6}, 
  0}, {4, {9, 6}, 1}, {4, {9, 7}, 0}, {4, {9, 7}, 1}, {0, {10, 0}, 
  0}, {0, {10, 0}, 1}, {0, {10, 1}, 0}, {0, {10, 1}, 1}, {0, {10, 2}, 
  0}, {2, {10, 2}, 1}, {2, {10, 3}, 0}, {2, {10, 3}, 1}, {2, {10, 4}, 
  0}, {0, {10, 4}, 1}, {0, {10, 5}, 0}, {0, {10, 5}, 1}, {0, {10, 6}, 
  0}, {0, {10, 6}, 1}, {0, {10, 7}, 0}, {0, {11, 0}, 0}, {0, {11, 0}, 
  1}, {0, {11, 1}, 0}, {2, {11, 1}, 1}, {2, {11, 2}, 0}, {2, {11, 2}, 
  1}, {2, {11, 3}, 0}, {0, {11, 3}, 1}, {0, {11, 4}, 0}, {0, {11, 4}, 
  1}, {0, {11, 5}, 0}, {0, {11, 5}, 1}, {0, {11, 6}, 0}, {0, {12, 0}, 
  0}, {2, {12, 0}, 1}, {2, {12, 1}, 0}, {2, {12, 1}, 1}, {2, {12, 2}, 
  0}, {0, {12, 2}, 1}, {0, {12, 3}, 0}, {0, {12, 3}, 1}, {0, {12, 4}, 
  0}, {0, {12, 4}, 1}, {0, {12, 5}, 0}}
\end{scope}
\draw (240:\k) coordinate (C) -- ++(0:\k) coordinate (D) -- ++(-60:\n) coordinate (E) -- ++(-120:2*\n) coordinate (F) -- ++(180:\k) coordinate (A) -- ++(120:\n) coordinate (B) -- cycle;
\draw (B) -- ++(0:\k) coordinate (G) -- (D); \draw (F) -- (G);
\foreach\x in {1,...,\k} \path (C) ++(0:\x-0.5) node[vertex=green] {};
\foreach\x in {1,...,\k} \path (A) ++(0:\x-0.5) node[vertex=green] {};
\foreach\x in {1,...,\n} \path (C) ++(240:\x-0.5) node[vertex=lightblue] {};
\foreach\x in {1,...,\n} \path (F) ++(60:\x-0.5) node[vertex=lightblue] {};
\foreach\x/\c in {1/lightblue,2/lightblue,3/lightblue,4/red,5/red} \path (A) ++(120:\x-0.5) node[vertex=\c] {};
\foreach\x/\c in {1/lightblue,2/lightblue,3/green,4/green,5/green} \path (C) ++(240:\x+\n-0.5) node[vertex=\c] {};
\foreach\x/\c in {1/lightblue,2/lightblue,3/red,4/lightblue,5/red} \path (D) ++(-60:\x-0.5) node[vertex=\c] {}; %lambda
\foreach\x/\c in {1/green,2/lightblue,3/green,4/green,5/lightblue} \path (E) ++(-120:\x-0.5) node[vertex=\c] {}; %mu
\draw[dotted] (B) ++(60:\n) -- ++(0:\k) -- ++(-60:\n);
\end{tikzpicture}
\ 
\begin{tikzpicture}[scale=0.4,baseline=-3.2cm]
\begin{scope}[puz]
\rawpuzzle{{0, {0, 2}, 1}, {0, {0, 3}, 0}, {0, {0, 3}, 1}, {0, {0, 4}, 
  0}, {2, {0, 4}, 1}, {2, {0, 5}, 0}, {0, {0, 5}, 1}, {0, {0, 6}, 
  0}, {2, {0, 6}, 1}, {2, {0, 7}, 0}, {0, {0, 7}, 1}, {0, {1, 1}, 
  1}, {0, {1, 2}, 0}, {0, {1, 2}, 1}, {0, {1, 3}, 0}, {2, {1, 3}, 
  1}, {2, {1, 4}, 0}, {0, {1, 4}, 1}, {0, {1, 5}, 0}, {2, {1, 5}, 
  1}, {2, {1, 6}, 0}, {0, {1, 6}, 1}, {0, {1, 7}, 0}, {5, {1, 7}, 
  1}, {0, {2, 0}, 1}, {0, {2, 1}, 0}, {0, {2, 1}, 1}, {0, {2, 2}, 
  0}, {2, {2, 2}, 1}, {2, {2, 3}, 0}, {0, {2, 3}, 1}, {0, {2, 4}, 
  0}, {2, {2, 4}, 1}, {2, {2, 5}, 0}, {5, {2, 5}, 1}, {4, {2, 6}, 
  0}, {4, {2, 6}, 1}, {5, {2, 7}, 0}, {0, {2, 7}, 1}, {4, {3, 0}, 
  0}, {4, {3, 0}, 1}, {4, {3, 1}, 0}, {4, {3, 1}, 1}, {1, {3, 2}, 
  0}, {1, {3, 2}, 1}, {4, {3, 3}, 0}, {4, {3, 3}, 1}, {1, {3, 4}, 
  0}, {1, {3, 4}, 1}, {5, {3, 5}, 0}, {0, {3, 5}, 1}, {0, {3, 6}, 
  0}, {0, {3, 6}, 1}, {0, {3, 7}, 0}, {0, {3, 7}, 1}, {4, {4, 0}, 
  0}, {4, {4, 0}, 1}, {4, {4, 1}, 0}, {4, {4, 1}, 1}, {1, {4, 2}, 0},
  {1, {4, 2}, 1}, {4, {4, 3}, 0}, {4, {4, 3}, 1}, {1, {4, 4}, 
  0}, {1, {4, 4}, 1}, {4, {4, 5}, 0}, {4, {4, 5}, 1}, {4, {4, 6}, 
  0}, {4, {4, 6}, 1}, {4, {4, 7}, 0}, {4, {4, 7}, 1}, {4, {5, 0}, 
  0}, {4, {5, 0}, 1}, {4, {5, 1}, 0}, {4, {5, 1}, 1}, {1, {5, 2}, 
  0}, {1, {5, 2}, 1}, {4, {5, 3}, 0}, {4, {5, 3}, 1}, {1, {5, 4}, 
  0}, {1, {5, 4}, 1}, {4, {5, 5}, 0}, {4, {5, 5}, 1}, {4, {5, 6}, 
  0}, {4, {5, 6}, 1}, {4, {5, 7}, 0}, {4, {5, 7}, 1}, {4, {6, 0}, 
  0}, {4, {6, 0}, 1}, {4, {6, 1}, 0}, {4, {6, 1}, 1}, {1, {6, 2}, 
  0}, {1, {6, 2}, 1}, {4, {6, 3}, 0}, {4, {6, 3}, 1}, {1, {6, 4}, 
  0}, {1, {6, 4}, 1}, {4, {6, 5}, 0}, {4, {6, 5}, 1}, {4, {6, 6}, 
  0}, {4, {6, 6}, 1}, {4, {6, 7}, 0}, {4, {6, 7}, 1}, {4, {7, 0}, 
  0}, {4, {7, 0}, 1}, {4, {7, 1}, 0}, {4, {7, 1}, 1}, {1, {7, 2}, 
  0}, {1, {7, 2}, 1}, {4, {7, 3}, 0}, {4, {7, 3}, 1}, {1, {7, 4}, 
  0}, {1, {7, 4}, 1}, {4, {7, 5}, 0}, {4, {7, 5}, 1}, {4, {7, 6}, 
  0}, {4, {7, 6}, 1}, {4, {7, 7}, 0}, {4, {7, 7}, 1}, {4, {8, 0}, 0}, 
 {4, {8, 0}, 1}, {4, {8, 1}, 0}, {4, {8, 1}, 1}, {1, {8, 2}, 
  0}, {1, {8, 2}, 1}, {4, {8, 3}, 0}, {4, {8, 3}, 1}, {1, {8, 4}, 
  0}, {1, {8, 4}, 1}, {4, {8, 5}, 0}, {4, {8, 5}, 1}, {4, {8, 6}, 
  0}, {4, {8, 6}, 1}, {4, {8, 7}, 0}, {4, {8, 7}, 1}, {4, {9, 0}, 
  0}, {4, {9, 0}, 1}, {4, {9, 1}, 0}, {4, {9, 1}, 1}, {1, {9, 2}, 
  0}, {3, {9, 2}, 1}, {0, {9, 3}, 0}, {2, {9, 3}, 1}, {2, {9, 4}, 
  0}, {0, {9, 4}, 1}, {0, {9, 5}, 0}, {0, {9, 5}, 1}, {0, {9, 6}, 
  0}, {5, {9, 6}, 1}, {4, {9, 7}, 0}, {4, {9, 7}, 1}, {0, {10, 0}, 
  0}, {0, {10, 0}, 1}, {0, {10, 1}, 0}, {0, {10, 1}, 1}, {3, {10, 2}, 
  0}, {1, {10, 2}, 1}, {1, {10, 3}, 0}, {3, {10, 3}, 1}, {0, {10, 4}, 
  0}, {0, {10, 4}, 1}, {0, {10, 5}, 0}, {5, {10, 5}, 1}, {5, {10, 6}, 
  0}, {0, {10, 6}, 1}, {0, {10, 7}, 0}, {0, {11, 0}, 0}, {0, {11, 0}, 
  1}, {0, {11, 1}, 0}, {2, {11, 1}, 1}, {2, {11, 2}, 0}, {2, {11, 2}, 
  1}, {8, {11, 3}, 0}, {4, {11, 3}, 1}, {4, {11, 4}, 0}, {4, {11, 4}, 
  1}, {5, {11, 5}, 0}, {0, {11, 5}, 1}, {0, {11, 6}, 0}, {0, {12, 0}, 
  0}, {2, {12, 0}, 1}, {2, {12, 1}, 0}, {2, {12, 1}, 1}, {2, {12, 2}, 
  0}, {0, {12, 2}, 1}, {0, {12, 3}, 0}, {0, {12, 3}, 1}, {0, {12, 4}, 
  0}, {0, {12, 4}, 1}, {0, {12, 5}, 0}}
\end{scope}
\draw (240:\k) coordinate (C) -- ++(0:\k) coordinate (D) -- ++(-60:\n) coordinate (E) -- ++(-120:2*\n) coordinate (F) -- ++(180:\k) coordinate (A) -- ++(120:\n) coordinate (B) -- cycle;
\draw (A) -- ++(60:2*\n) coordinate (G) -- (C); \draw (E) -- (G);
\foreach\x in {1,...,\k} \path (C) ++(0:\x-0.5) node[vertex=green] {};
\foreach\x in {1,...,\k} \path (A) ++(0:\x-0.5) node[vertex=green] {};
\foreach\x in {1,...,\n} \path (C) ++(240:\x-0.5) node[vertex=lightblue] {};
\foreach\x in {1,...,\n} \path (F) ++(60:\x-0.5) node[vertex=lightblue] {};
\foreach\x/\c in {1/lightblue,2/lightblue,3/lightblue,4/red,5/red} \path (A) ++(120:\x-0.5) node[vertex=\c] {};
\foreach\x/\c in {1/lightblue,2/lightblue,3/green,4/green,5/green} \path (C) ++(240:\x+\n-0.5) node[vertex=\c] {};
\foreach\x/\c in {1/lightblue,2/lightblue,3/red,4/lightblue,5/red} \path (D) ++(-60:\x-0.5) node[vertex=\c] {}; %lambda
\foreach\x/\c in {1/green,2/lightblue,3/green,4/green,5/lightblue} \path (E) ++(-120:\x-0.5) node[vertex=\c] {}; %mu
\draw[dotted] (B) ++(60:\n) -- ++(-60:\n) -- ++(0:\k);
\end{tikzpicture}
\end{center}
\end{samepage}

The reasoning is the same: we start from \eqref{eq:master} and use Lemma~\ref{lem:split}
supplemented by identity \eqref{eq:id}:
\begin{align*}
\sum_\nu \loz{\nu/->,\tilde\lambda/->,\tilde\mu/->,\varnothing/->}{\y}{\z} G_\nu(\x;\z) 
&=
\sum_{\tilde\nu} \loz{\varnothing/->,\tilde\lambda/->,\tilde\nu/->,\varnothing/->}{\y}{\y} 
G_{\tilde\mu{}^\ast}(\x;\z) G^{\tilde\nu{}^\ast}(\x;\y)
\\
&=
G_{\tilde\mu{}^\ast}(\x;\z)
\sum_{\tilde\nu:\,\tilde\nu{}^\ast\vartriangleright\tilde\lambda}
(-1)^{|\tilde\nu{}^\ast|-|\tilde\lambda|}\prod_{i\in\tilde\lambda} y_i 
\prod_{i=n-k+1}^n y_i^{-1}
\,
G^{\tilde\nu{}^\ast}(\x;\y)
\\
&=
\prod_{i=1}^k x_i \prod_{i=n-k+1}^n y_i^{-1}
\,
G_{\tilde\mu{}^\ast}(\x;\z) G^{\tilde\lambda}(\x;\y).
\end{align*}

After some obvious notational simplifications, we obtain another remarkable identity:
\begin{equation}\label{eq:preequivalt}
G^\lambda(\x;\y)G_\mu(\x;\z)
=
\sum_\nu
\prod_{i=n-k+1}^n y_i
\prod_{i\in\nu^\ast} z_i^{-1}
\loz{\nu/<-,\lambda/->,\mu/<-,\varnothing/->}{\y}{\z}
G^\nu(\x;\zz)
\end{equation}
(compare with \eqref{eq:preMSalt}).

Finally, set $\z=\y$ and compute (using $\left<\cdot\right>$ in $K_T(Gr(k,n))$):
\begin{align*}
\left<
G^\lambda(\x;\y)
G{}^\kappa(\x;\yy)
G_\nu(\x;\yy)
\right>
&=
\sum_\mu
\left<
G^\lambda(\x;\y)
G_\nu(\x;\yy)
G_\mu(\x;\y)
\right>
\left<
G^\mu(\x;\y)
G{}^\kappa(\x;\yy)
\right>
\\
&=
\sum_{\mu}
\prod_{i=n-k+1}^n y_i
\prod_{i\in\nu^\ast} y_i^{-1}
\loz{\nu/<-,\lambda/->,\mu/<-,\varnothing/->}{\y}{\y}
\left<
G^\mu(\x;\y)
G{}^\kappa(\x;\yy)
\right>
\\
&=
\sum_{\mu,\rho}
\prod_{i=n-k+1}^n y_i
\prod_{i\in\nu^\ast} y_i^{-1}
\triup{\nu/<-,\lambda/->,\rho/->}{\y}\tridown{\rho/<-,\mu/<-,\varnothing/->}{\y}
\left<
G^\mu(\x;\y)
G{}^\kappa(\x;\yy)
\right>
\\
&=
\sum_{\rho}
\prod_{i\in\rho^\ast} y_i
\prod_{i\in\nu^\ast} y_i^{-1}
\triup{\nu/<-,\lambda/->,\rho/->}{\y}
\left<
G_\rho(\x;\yy)
G{}^\kappa(\x;\yy)
\right>
\\
&=
\prod_{i\in\kappa^\ast} y_i
\prod_{i\in\nu^\ast} y_i^{-1}
\triup{\nu/<-,\lambda/->,\kappa/->}{\y}
\end{align*}
We find that the prefactors can once again be absorbed by using modified weights \eqref{eq:modwei}. We then recognize
Theorem~\ref{thm:equivalt}.

%(3)=fig 15 180 deg
%\item $\tilde\lambda=\mu=\varnothing$, $\t=\yy$.
\begin{samepage}
\subsubsection{$\tilde\lambda=\mu=\varnothing$, $\t=\yy$.}
\begin{center}
\begin{tikzpicture}[scale=0.4,baseline=-3.2cm]
\begin{scope}[puz]
\rawpuzzle{{0, {0, 2}, 1}, {0, {0, 3}, 0}, {0, {0, 3}, 1}, {0, {0, 4}, 
  0}, {0, {0, 4}, 1}, {0, {0, 5}, 0}, {0, {0, 5}, 1}, {7, {0, 6}, 
  0}, {7, {0, 6}, 1}, {7, {0, 7}, 0}, {7, {0, 7}, 1}, {0, {1, 1}, 
  1}, {0, {1, 2}, 0}, {5, {1, 2}, 1}, {4, {1, 3}, 0}, {4, {1, 3}, 
  1}, {4, {1, 4}, 0}, {4, {1, 4}, 1}, {4, {1, 5}, 0}, {4, {1, 5}, 
  1}, {1, {1, 6}, 0}, {1, {1, 6}, 1}, {1, {1, 7}, 0}, {1, {1, 7}, 
  1}, {5, {2, 0}, 1}, {4, {2, 1}, 0}, {4, {2, 1}, 1}, {5, {2, 2}, 
  0}, {0, {2, 2}, 1}, {0, {2, 3}, 0}, {0, {2, 3}, 1}, {0, {2, 4}, 
  0}, {0, {2, 4}, 1}, {0, {2, 5}, 0}, {2, {2, 5}, 1}, {2, {2, 6}, 
  0}, {2, {2, 6}, 1}, {2, {2, 7}, 0}, {0, {2, 7}, 1}, {5, {3, 0}, 
  0}, {5, {3, 0}, 1}, {4, {3, 1}, 0}, {4, {3, 1}, 1}, {4, {3, 2}, 
  0}, {4, {3, 2}, 1}, {4, {3, 3}, 0}, {4, {3, 3}, 1}, {4, {3, 4}, 
  0}, {4, {3, 4}, 1}, {1, {3, 5}, 0}, {3, {3, 5}, 1}, {7, {3, 6}, 
  0}, {7, {3, 6}, 1}, {0, {3, 7}, 0}, {0, {3, 7}, 1}, {5, {4, 0}, 
  0}, {0, {4, 0}, 1}, {0, {4, 1}, 0}, {0, {4, 1}, 1}, {0, {4, 2}, 0},
  {0, {4, 2}, 1}, {0, {4, 3}, 0}, {0, {4, 3}, 1}, {0, {4, 4}, 
  0}, {2, {4, 4}, 1}, {8, {4, 5}, 0}, {4, {4, 5}, 1}, {1, {4, 6}, 
  0}, {1, {4, 6}, 1}, {4, {4, 7}, 0}, {4, {4, 7}, 1}, {4, {5, 0}, 
  0}, {4, {5, 0}, 1}, {4, {5, 1}, 0}, {4, {5, 1}, 1}, {4, {5, 2}, 
  0}, {4, {5, 2}, 1}, {4, {5, 3}, 0}, {4, {5, 3}, 1}, {1, {5, 4}, 
  0}, {1, {5, 4}, 1}, {4, {5, 5}, 0}, {4, {5, 5}, 1}, {1, {5, 6}, 
  0}, {1, {5, 6}, 1}, {4, {5, 7}, 0}, {4, {5, 7}, 1}, {4, {6, 0}, 
  0}, {4, {6, 0}, 1}, {4, {6, 1}, 0}, {4, {6, 1}, 1}, {4, {6, 2}, 
  0}, {4, {6, 2}, 1}, {4, {6, 3}, 0}, {4, {6, 3}, 1}, {1, {6, 4}, 
  0}, {1, {6, 4}, 1}, {4, {6, 5}, 0}, {4, {6, 5}, 1}, {1, {6, 6}, 
  0}, {1, {6, 6}, 1}, {4, {6, 7}, 0}, {4, {6, 7}, 1}, {4, {7, 0}, 
  0}, {4, {7, 0}, 1}, {4, {7, 1}, 0}, {4, {7, 1}, 1}, {4, {7, 2}, 
  0}, {4, {7, 2}, 1}, {4, {7, 3}, 0}, {4, {7, 3}, 1}, {1, {7, 4}, 
  0}, {1, {7, 4}, 1}, {4, {7, 5}, 0}, {4, {7, 5}, 1}, {1, {7, 6}, 
  0}, {1, {7, 6}, 1}, {4, {7, 7}, 0}, {4, {7, 7}, 1}, {4, {8, 0}, 0}, 
 {4, {8, 0}, 1}, {4, {8, 1}, 0}, {4, {8, 1}, 1}, {4, {8, 2}, 
  0}, {4, {8, 2}, 1}, {4, {8, 3}, 0}, {4, {8, 3}, 1}, {1, {8, 4}, 
  0}, {1, {8, 4}, 1}, {4, {8, 5}, 0}, {4, {8, 5}, 1}, {1, {8, 6}, 
  0}, {1, {8, 6}, 1}, {4, {8, 7}, 0}, {4, {8, 7}, 1}, {4, {9, 0}, 
  0}, {4, {9, 0}, 1}, {4, {9, 1}, 0}, {4, {9, 1}, 1}, {4, {9, 2}, 
  0}, {4, {9, 2}, 1}, {4, {9, 3}, 0}, {4, {9, 3}, 1}, {1, {9, 4}, 
  0}, {1, {9, 4}, 1}, {4, {9, 5}, 0}, {4, {9, 5}, 1}, {1, {9, 6}, 
  0}, {1, {9, 6}, 1}, {4, {9, 7}, 0}, {4, {9, 7}, 1}, {0, {10, 0}, 
  0}, {0, {10, 0}, 1}, {0, {10, 1}, 0}, {0, {10, 1}, 1}, {0, {10, 2}, 
  0}, {0, {10, 2}, 1}, {0, {10, 3}, 0}, {2, {10, 3}, 1}, {2, {10, 4}, 
  0}, {0, {10, 4}, 1}, {0, {10, 5}, 0}, {2, {10, 5}, 1}, {2, {10, 6}, 
  0}, {0, {10, 6}, 1}, {0, {10, 7}, 0}, {0, {11, 0}, 0}, {0, {11, 0}, 
  1}, {0, {11, 1}, 0}, {0, {11, 1}, 1}, {0, {11, 2}, 0}, {2, {11, 2}, 
  1}, {2, {11, 3}, 0}, {0, {11, 3}, 1}, {0, {11, 4}, 0}, {2, {11, 4}, 
  1}, {2, {11, 5}, 0}, {0, {11, 5}, 1}, {0, {11, 6}, 0}, {0, {12, 0}, 
  0}, {0, {12, 0}, 1}, {0, {12, 1}, 0}, {2, {12, 1}, 1}, {2, {12, 2}, 
  0}, {0, {12, 2}, 1}, {0, {12, 3}, 0}, {2, {12, 3}, 1}, {2, {12, 4}, 
  0}, {0, {12, 4}, 1}, {0, {12, 5}, 0}}
\end{scope}
\draw (240:\k) coordinate (C) -- ++(0:\k) coordinate (D) -- ++(-60:\n) coordinate (E) -- ++(-120:2*\n) coordinate (F) -- ++(180:\k) coordinate (A) -- ++(120:\n) coordinate (B) -- cycle;
\draw (B) -- ++(0:\k) coordinate (G) -- (D); \draw (F) -- (G);
\foreach\x in {1,...,\k} \path (C) ++(0:\x-0.5) node[vertex=green] {};
\foreach\x in {1,...,\k} \path (A) ++(0:\x-0.5) node[vertex=green] {};
\foreach\x in {1,...,\n} \path (C) ++(240:\x-0.5) node[vertex=lightblue] {};
\foreach\x in {1,...,\n} \path (F) ++(60:\x-0.5) node[vertex=lightblue] {};
\foreach\x/\c in {1/lightblue,2/red,3/lightblue,4/red,5/lightblue} \path (A) ++(120:\x-0.5) node[vertex=\c] {};
\foreach\x/\c in {1/lightblue,2/lightblue,3/green,4/green,5/green} \path (C) ++(240:\x+\n-0.5) node[vertex=\c] {};
\foreach\x/\c in {1/lightblue,2/lightblue,3/lightblue,4/red,5/red} \path (D) ++(-60:\x-0.5) node[vertex=\c] {}; %lambda
\foreach\x/\c in {1/green,2/lightblue,3/green,4/green,5/lightblue} \path (E) ++(-120:\x-0.5) node[vertex=\c] {}; %mu
\draw[dotted] (B) ++(60:\n) -- ++(0:\k) -- ++(-60:\n);
\end{tikzpicture}
\ 
\begin{tikzpicture}[scale=0.4,baseline=-3.2cm]
\begin{scope}[puz]
\rawpuzzle{{0, {0, 2}, 1}, {0, {0, 3}, 0}, {0, {0, 3}, 1}, {0, {0, 4}, 
  0}, {0, {0, 4}, 1}, {0, {0, 5}, 0}, {2, {0, 5}, 1}, {2, {0, 6}, 
  0}, {2, {0, 6}, 1}, {2, {0, 7}, 0}, {0, {0, 7}, 1}, {0, {1, 1}, 
  1}, {0, {1, 2}, 0}, {0, {1, 2}, 1}, {0, {1, 3}, 0}, {0, {1, 3}, 
  1}, {0, {1, 4}, 0}, {2, {1, 4}, 1}, {2, {1, 5}, 0}, {2, {1, 5}, 
  1}, {2, {1, 6}, 0}, {0, {1, 6}, 1}, {0, {1, 7}, 0}, {5, {1, 7}, 
  1}, {0, {2, 0}, 1}, {0, {2, 1}, 0}, {0, {2, 1}, 1}, {0, {2, 2}, 
  0}, {0, {2, 2}, 1}, {0, {2, 3}, 0}, {2, {2, 3}, 1}, {2, {2, 4}, 
  0}, {2, {2, 4}, 1}, {2, {2, 5}, 0}, {5, {2, 5}, 1}, {4, {2, 6}, 
  0}, {4, {2, 6}, 1}, {5, {2, 7}, 0}, {0, {2, 7}, 1}, {4, {3, 0}, 
  0}, {4, {3, 0}, 1}, {4, {3, 1}, 0}, {4, {3, 1}, 1}, {4, {3, 2}, 
  0}, {4, {3, 2}, 1}, {1, {3, 3}, 0}, {1, {3, 3}, 1}, {1, {3, 4}, 
  0}, {1, {3, 4}, 1}, {5, {3, 5}, 0}, {0, {3, 5}, 1}, {0, {3, 6}, 
  0}, {0, {3, 6}, 1}, {0, {3, 7}, 0}, {0, {3, 7}, 1}, {4, {4, 0}, 
  0}, {4, {4, 0}, 1}, {4, {4, 1}, 0}, {4, {4, 1}, 1}, {4, {4, 2}, 0},
  {4, {4, 2}, 1}, {1, {4, 3}, 0}, {1, {4, 3}, 1}, {1, {4, 4}, 
  0}, {1, {4, 4}, 1}, {4, {4, 5}, 0}, {4, {4, 5}, 1}, {4, {4, 6}, 
  0}, {4, {4, 6}, 1}, {4, {4, 7}, 0}, {4, {4, 7}, 1}, {4, {5, 0}, 
  0}, {4, {5, 0}, 1}, {4, {5, 1}, 0}, {4, {5, 1}, 1}, {4, {5, 2}, 
  0}, {4, {5, 2}, 1}, {1, {5, 3}, 0}, {1, {5, 3}, 1}, {1, {5, 4}, 
  0}, {1, {5, 4}, 1}, {4, {5, 5}, 0}, {4, {5, 5}, 1}, {4, {5, 6}, 
  0}, {4, {5, 6}, 1}, {4, {5, 7}, 0}, {4, {5, 7}, 1}, {4, {6, 0}, 
  0}, {4, {6, 0}, 1}, {4, {6, 1}, 0}, {4, {6, 1}, 1}, {4, {6, 2}, 
  0}, {4, {6, 2}, 1}, {1, {6, 3}, 0}, {1, {6, 3}, 1}, {1, {6, 4}, 
  0}, {1, {6, 4}, 1}, {4, {6, 5}, 0}, {4, {6, 5}, 1}, {4, {6, 6}, 
  0}, {4, {6, 6}, 1}, {4, {6, 7}, 0}, {4, {6, 7}, 1}, {4, {7, 0}, 
  0}, {4, {7, 0}, 1}, {4, {7, 1}, 0}, {4, {7, 1}, 1}, {4, {7, 2}, 
  0}, {4, {7, 2}, 1}, {1, {7, 3}, 0}, {1, {7, 3}, 1}, {1, {7, 4}, 
  0}, {1, {7, 4}, 1}, {4, {7, 5}, 0}, {4, {7, 5}, 1}, {4, {7, 6}, 
  0}, {4, {7, 6}, 1}, {4, {7, 7}, 0}, {4, {7, 7}, 1}, {4, {8, 0}, 0}, 
 {4, {8, 0}, 1}, {4, {8, 1}, 0}, {4, {8, 1}, 1}, {4, {8, 2}, 
  0}, {4, {8, 2}, 1}, {1, {8, 3}, 0}, {1, {8, 3}, 1}, {1, {8, 4}, 
  0}, {3, {8, 4}, 1}, {0, {8, 5}, 0}, {5, {8, 5}, 1}, {4, {8, 6}, 
  0}, {4, {8, 6}, 1}, {4, {8, 7}, 0}, {4, {8, 7}, 1}, {4, {9, 0}, 
  0}, {4, {9, 0}, 1}, {4, {9, 1}, 0}, {4, {9, 1}, 1}, {4, {9, 2}, 
  0}, {4, {9, 2}, 1}, {1, {9, 3}, 0}, {3, {9, 3}, 1}, {3, {9, 4}, 
  0}, {1, {9, 4}, 1}, {5, {9, 5}, 0}, {0, {9, 5}, 1}, {0, {9, 6}, 
  0}, {5, {9, 6}, 1}, {4, {9, 7}, 0}, {4, {9, 7}, 1}, {0, {10, 0}, 
  0}, {0, {10, 0}, 1}, {0, {10, 1}, 0}, {0, {10, 1}, 1}, {0, {10, 2}, 
  0}, {2, {10, 2}, 1}, {8, {10, 3}, 0}, {4, {10, 3}, 1}, {1, {10, 4}, 
  0}, {3, {10, 4}, 1}, {0, {10, 5}, 0}, {5, {10, 5}, 1}, {5, {10, 6}, 
  0}, {0, {10, 6}, 1}, {0, {10, 7}, 0}, {0, {11, 0}, 0}, {0, {11, 0}, 
  1}, {0, {11, 1}, 0}, {2, {11, 1}, 1}, {2, {11, 2}, 0}, {0, {11, 2}, 
  1}, {0, {11, 3}, 0}, {2, {11, 3}, 1}, {8, {11, 4}, 0}, {4, {11, 4}, 
  1}, {5, {11, 5}, 0}, {0, {11, 5}, 1}, {0, {11, 6}, 0}, {0, {12, 0}, 
  0}, {0, {12, 0}, 1}, {7, {12, 1}, 0}, {7, {12, 1}, 1}, {0, {12, 2}, 
  0}, {0, {12, 2}, 1}, {7, {12, 3}, 0}, {7, {12, 3}, 1}, {0, {12, 4}, 
  0}, {0, {12, 4}, 1}, {0, {12, 5}, 0}}
\end{scope}
\draw (240:\k) coordinate (C) -- ++(0:\k) coordinate (D) -- ++(-60:\n) coordinate (E) -- ++(-120:2*\n) coordinate (F) -- ++(180:\k) coordinate (A) -- ++(120:\n) coordinate (B) -- cycle;
\draw (A) -- ++(60:2*\n) coordinate (G) -- (C); \draw (E) -- (G);
\foreach\x in {1,...,\k} \path (C) ++(0:\x-0.5) node[vertex=green] {};
\foreach\x in {1,...,\k} \path (A) ++(0:\x-0.5) node[vertex=green] {};
\foreach\x in {1,...,\n} \path (C) ++(240:\x-0.5) node[vertex=lightblue] {};
\foreach\x in {1,...,\n} \path (F) ++(60:\x-0.5) node[vertex=lightblue] {};
\foreach\x/\c in {1/lightblue,2/red,3/lightblue,4/red,5/lightblue} \path (A) ++(120:\x-0.5) node[vertex=\c] {};
\foreach\x/\c in {1/lightblue,2/lightblue,3/green,4/green,5/green} \path (C) ++(240:\x+\n-0.5) node[vertex=\c] {};
\foreach\x/\c in {1/lightblue,2/lightblue,3/lightblue,4/red,5/red} \path (D) ++(-60:\x-0.5) node[vertex=\c] {}; %lambda
\foreach\x/\c in {1/green,2/lightblue,3/green,4/green,5/lightblue} \path (E) ++(-120:\x-0.5) node[vertex=\c] {}; %mu
\draw[dotted] (B) ++(60:\n) -- ++(-60:\n) -- ++(0:\k);
\end{tikzpicture}
\end{center}
\end{samepage}

Rewriting the master identity \eqref{eq:master}:
\[
\sum_\nu \loz{\nu/->,\varnothing/->,\tilde\mu/->,\lambda/->}{\yy}{\z} G_\nu(\x;\z)
=
\sum_{\tilde\nu} \loz{\varnothing/->,\varnothing/->,\tilde\nu/->,\lambda/->}{\yy}{\y} G_{\tilde\mu{}^\ast}(\x;\z) G^{\tilde\nu{}^\ast}(\x;\y)
\]
we note that we can apply the relation \eqref{eq:preMSalt} (a form of Theorem~\ref{thm:MSalt}) to its r.h.s.,
resulting in
\[
\sum_\nu \loz{\nu/->,\varnothing/->,\tilde\mu/->,\lambda/->}{\yy}{\z} G_\nu(\x;\z)
=
G_{\lambda^\ast}(\x;\yy) G_{\tilde\mu{}^\ast}(\x;\z)
\]
With obvious changes of notation, we obtain
\[
\sum_\nu \loz{\nu/->,\varnothing/->,\mu/<-,\lambda/<-}{\z}{\y} G_\nu(\x;\y)
=
G_{\lambda}(\x;\z) G_{\mu}(\x;\y)
\]
which is exactly the content of Theorem~\ref{thm:MS}.

We now further specialize to $\z=\y$; the top half of the puzzle becomes frozen according to Lemma~\ref{lem:split},
and we immediately reproduce Theorem~\ref{thm:equivdual}.

%(4)=fig 15
%\item $\lambda=\tilde\mu=\varnothing$, $\t=\zz$.
\begin{samepage}
\subsubsection{$\lambda=\tilde\mu=\varnothing$, $\t=\zz$.}\label{sec:lastcase}
\begin{center}
\begin{tikzpicture}[scale=0.4,baseline=-3.2cm]
\begin{scope}[puz]
\rawpuzzle{{0, {0, 2}, 1}, {0, {0, 3}, 0}, {0, {0, 3}, 1}, {7, {0, 4}, 
  0}, {7, {0, 4}, 1}, {0, {0, 5}, 0}, {0, {0, 5}, 1}, {7, {0, 6}, 
  0}, {7, {0, 6}, 1}, {0, {0, 7}, 0}, {0, {0, 7}, 1}, {0, {1, 1}, 
  1}, {0, {1, 2}, 0}, {0, {1, 2}, 1}, {0, {1, 3}, 0}, {2, {1, 3}, 
  1}, {2, {1, 4}, 0}, {0, {1, 4}, 1}, {0, {1, 5}, 0}, {2, {1, 5}, 
  1}, {2, {1, 6}, 0}, {0, {1, 6}, 1}, {0, {1, 7}, 0}, {0, {1, 7}, 
  1}, {0, {2, 0}, 1}, {0, {2, 1}, 0}, {5, {2, 1}, 1}, {4, {2, 2}, 
  0}, {4, {2, 2}, 1}, {1, {2, 3}, 0}, {1, {2, 3}, 1}, {4, {2, 4}, 
  0}, {4, {2, 4}, 1}, {1, {2, 5}, 0}, {3, {2, 5}, 1}, {0, {2, 6}, 
  0}, {0, {2, 6}, 1}, {0, {2, 7}, 0}, {0, {2, 7}, 1}, {4, {3, 0}, 
  0}, {4, {3, 0}, 1}, {5, {3, 1}, 0}, {5, {3, 1}, 1}, {4, {3, 2}, 
  0}, {4, {3, 2}, 1}, {1, {3, 3}, 0}, {3, {3, 3}, 1}, {0, {3, 4}, 
  0}, {2, {3, 4}, 1}, {8, {3, 5}, 0}, {4, {3, 5}, 1}, {4, {3, 6}, 
  0}, {4, {3, 6}, 1}, {4, {3, 7}, 0}, {4, {3, 7}, 1}, {4, {4, 0}, 
  0}, {4, {4, 0}, 1}, {5, {4, 1}, 0}, {0, {4, 1}, 1}, {0, {4, 2}, 0},
  {0, {4, 2}, 1}, {3, {4, 3}, 0}, {1, {4, 3}, 1}, {1, {4, 4}, 
  0}, {1, {4, 4}, 1}, {4, {4, 5}, 0}, {4, {4, 5}, 1}, {4, {4, 6}, 
  0}, {4, {4, 6}, 1}, {4, {4, 7}, 0}, {4, {4, 7}, 1}, {4, {5, 0}, 
  0}, {4, {5, 0}, 1}, {4, {5, 1}, 0}, {4, {5, 1}, 1}, {4, {5, 2}, 
  0}, {4, {5, 2}, 1}, {1, {5, 3}, 0}, {1, {5, 3}, 1}, {1, {5, 4}, 
  0}, {1, {5, 4}, 1}, {4, {5, 5}, 0}, {4, {5, 5}, 1}, {4, {5, 6}, 
  0}, {4, {5, 6}, 1}, {4, {5, 7}, 0}, {4, {5, 7}, 1}, {4, {6, 0}, 
  0}, {4, {6, 0}, 1}, {4, {6, 1}, 0}, {4, {6, 1}, 1}, {4, {6, 2}, 
  0}, {4, {6, 2}, 1}, {1, {6, 3}, 0}, {1, {6, 3}, 1}, {1, {6, 4}, 
  0}, {1, {6, 4}, 1}, {4, {6, 5}, 0}, {4, {6, 5}, 1}, {4, {6, 6}, 
  0}, {4, {6, 6}, 1}, {4, {6, 7}, 0}, {4, {6, 7}, 1}, {4, {7, 0}, 
  0}, {4, {7, 0}, 1}, {4, {7, 1}, 0}, {4, {7, 1}, 1}, {4, {7, 2}, 
  0}, {4, {7, 2}, 1}, {1, {7, 3}, 0}, {1, {7, 3}, 1}, {1, {7, 4}, 
  0}, {1, {7, 4}, 1}, {4, {7, 5}, 0}, {4, {7, 5}, 1}, {4, {7, 6}, 
  0}, {4, {7, 6}, 1}, {4, {7, 7}, 0}, {4, {7, 7}, 1}, {4, {8, 0}, 0}, 
 {4, {8, 0}, 1}, {4, {8, 1}, 0}, {4, {8, 1}, 1}, {4, {8, 2}, 
  0}, {4, {8, 2}, 1}, {1, {8, 3}, 0}, {1, {8, 3}, 1}, {1, {8, 4}, 
  0}, {1, {8, 4}, 1}, {4, {8, 5}, 0}, {4, {8, 5}, 1}, {4, {8, 6}, 
  0}, {4, {8, 6}, 1}, {4, {8, 7}, 0}, {4, {8, 7}, 1}, {0, {9, 0}, 
  0}, {0, {9, 0}, 1}, {0, {9, 1}, 0}, {0, {9, 1}, 1}, {0, {9, 2}, 
  0}, {5, {9, 2}, 1}, {1, {9, 3}, 0}, {1, {9, 3}, 1}, {1, {9, 4}, 
  0}, {1, {9, 4}, 1}, {4, {9, 5}, 0}, {4, {9, 5}, 1}, {4, {9, 6}, 
  0}, {4, {9, 6}, 1}, {4, {9, 7}, 0}, {4, {9, 7}, 1}, {0, {10, 0}, 
  0}, {5, {10, 0}, 1}, {4, {10, 1}, 0}, {4, {10, 1}, 1}, {5, {10, 2}, 
  0}, {2, {10, 2}, 1}, {2, {10, 3}, 0}, {2, {10, 3}, 1}, {2, {10, 4}, 
  0}, {0, {10, 4}, 1}, {0, {10, 5}, 0}, {0, {10, 5}, 1}, {0, {10, 6}, 
  0}, {0, {10, 6}, 1}, {0, {10, 7}, 0}, {5, {11, 0}, 0}, {0, {11, 0}, 
  1}, {0, {11, 1}, 0}, {2, {11, 1}, 1}, {2, {11, 2}, 0}, {2, {11, 2}, 
  1}, {2, {11, 3}, 0}, {0, {11, 3}, 1}, {0, {11, 4}, 0}, {0, {11, 4}, 
  1}, {0, {11, 5}, 0}, {0, {11, 5}, 1}, {0, {11, 6}, 0}, {0, {12, 0}, 
  0}, {2, {12, 0}, 1}, {2, {12, 1}, 0}, {2, {12, 1}, 1}, {2, {12, 2}, 
  0}, {0, {12, 2}, 1}, {0, {12, 3}, 0}, {0, {12, 3}, 1}, {0, {12, 4}, 
  0}, {0, {12, 4}, 1}, {0, {12, 5}, 0}}
\end{scope}
\draw (240:\k) coordinate (C) -- ++(0:\k) coordinate (D) -- ++(-60:\n) coordinate (E) -- ++(-120:2*\n) coordinate (F) -- ++(180:\k) coordinate (A) -- ++(120:\n) coordinate (B) -- cycle;
\draw (B) -- ++(0:\k) coordinate (G) -- (D); \draw (F) -- (G);
\foreach\x in {1,...,\k} \path (C) ++(0:\x-0.5) node[vertex=green] {};
\foreach\x in {1,...,\k} \path (A) ++(0:\x-0.5) node[vertex=green] {};
\foreach\x in {1,...,\n} \path (C) ++(240:\x-0.5) node[vertex=lightblue] {};
\foreach\x in {1,...,\n} \path (F) ++(60:\x-0.5) node[vertex=lightblue] {};
\foreach\x/\c in {1/lightblue,2/lightblue,3/lightblue,4/red,5/red} \path (A) ++(120:\x-0.5) node[vertex=\c] {};
\foreach\x/\c in {1/lightblue,2/green,3/green,4/lightblue,5/green} \path (C) ++(240:\x+\n-0.5) node[vertex=\c] {};
\foreach\x/\c in {1/lightblue,2/red,3/lightblue,4/red,5/lightblue} \path (D) ++(-60:\x-0.5) node[vertex=\c] {}; %lambda
\foreach\x/\c in {1/green,2/green,3/green,4/lightblue,5/lightblue} \path (E) ++(-120:\x-0.5) node[vertex=\c] {}; %mu
\draw[dotted] (B) ++(60:\n) -- ++(0:\k) -- ++(-60:\n);
\end{tikzpicture}
\ 
\begin{tikzpicture}[scale=0.4,baseline=-3.2cm]
\begin{scope}[puz]
\rawpuzzle{{0, {0, 2}, 1}, {0, {0, 3}, 0}, {2, {0, 3}, 1}, {2, {0, 4}, 
  0}, {0, {0, 4}, 1}, {0, {0, 5}, 0}, {2, {0, 5}, 1}, {2, {0, 6}, 
  0}, {0, {0, 6}, 1}, {0, {0, 7}, 0}, {0, {0, 7}, 1}, {0, {1, 1}, 
  1}, {0, {1, 2}, 0}, {2, {1, 2}, 1}, {2, {1, 3}, 0}, {0, {1, 3}, 
  1}, {0, {1, 4}, 0}, {2, {1, 4}, 1}, {2, {1, 5}, 0}, {0, {1, 5}, 
  1}, {0, {1, 6}, 0}, {0, {1, 6}, 1}, {0, {1, 7}, 0}, {0, {1, 7}, 
  1}, {0, {2, 0}, 1}, {0, {2, 1}, 0}, {2, {2, 1}, 1}, {2, {2, 2}, 
  0}, {0, {2, 2}, 1}, {0, {2, 3}, 0}, {2, {2, 3}, 1}, {2, {2, 4}, 
  0}, {0, {2, 4}, 1}, {0, {2, 5}, 0}, {0, {2, 5}, 1}, {0, {2, 6}, 
  0}, {0, {2, 6}, 1}, {0, {2, 7}, 0}, {0, {2, 7}, 1}, {4, {3, 0}, 
  0}, {4, {3, 0}, 1}, {1, {3, 1}, 0}, {1, {3, 1}, 1}, {4, {3, 2}, 
  0}, {4, {3, 2}, 1}, {1, {3, 3}, 0}, {1, {3, 3}, 1}, {4, {3, 4}, 
  0}, {4, {3, 4}, 1}, {4, {3, 5}, 0}, {4, {3, 5}, 1}, {4, {3, 6}, 
  0}, {4, {3, 6}, 1}, {4, {3, 7}, 0}, {4, {3, 7}, 1}, {4, {4, 0}, 
  0}, {4, {4, 0}, 1}, {1, {4, 1}, 0}, {1, {4, 1}, 1}, {4, {4, 2}, 0},
  {4, {4, 2}, 1}, {1, {4, 3}, 0}, {1, {4, 3}, 1}, {4, {4, 4}, 
  0}, {4, {4, 4}, 1}, {4, {4, 5}, 0}, {4, {4, 5}, 1}, {4, {4, 6}, 
  0}, {4, {4, 6}, 1}, {4, {4, 7}, 0}, {4, {4, 7}, 1}, {4, {5, 0}, 
  0}, {4, {5, 0}, 1}, {1, {5, 1}, 0}, {1, {5, 1}, 1}, {4, {5, 2}, 
  0}, {4, {5, 2}, 1}, {1, {5, 3}, 0}, {1, {5, 3}, 1}, {4, {5, 4}, 
  0}, {4, {5, 4}, 1}, {4, {5, 5}, 0}, {4, {5, 5}, 1}, {4, {5, 6}, 
  0}, {4, {5, 6}, 1}, {4, {5, 7}, 0}, {4, {5, 7}, 1}, {4, {6, 0}, 
  0}, {4, {6, 0}, 1}, {1, {6, 1}, 0}, {1, {6, 1}, 1}, {4, {6, 2}, 
  0}, {4, {6, 2}, 1}, {1, {6, 3}, 0}, {1, {6, 3}, 1}, {4, {6, 4}, 
  0}, {4, {6, 4}, 1}, {4, {6, 5}, 0}, {4, {6, 5}, 1}, {4, {6, 6}, 
  0}, {4, {6, 6}, 1}, {4, {6, 7}, 0}, {4, {6, 7}, 1}, {4, {7, 0}, 
  0}, {4, {7, 0}, 1}, {1, {7, 1}, 0}, {1, {7, 1}, 1}, {4, {7, 2}, 
  0}, {4, {7, 2}, 1}, {1, {7, 3}, 0}, {1, {7, 3}, 1}, {4, {7, 4}, 
  0}, {4, {7, 4}, 1}, {4, {7, 5}, 0}, {4, {7, 5}, 1}, {4, {7, 6}, 
  0}, {4, {7, 6}, 1}, {4, {7, 7}, 0}, {4, {7, 7}, 1}, {4, {8, 0}, 0}, 
 {4, {8, 0}, 1}, {1, {8, 1}, 0}, {3, {8, 1}, 1}, {0, {8, 2}, 
  0}, {0, {8, 2}, 1}, {7, {8, 3}, 0}, {7, {8, 3}, 1}, {0, {8, 4}, 
  0}, {0, {8, 4}, 1}, {0, {8, 5}, 0}, {0, {8, 5}, 1}, {0, {8, 6}, 
  0}, {0, {8, 6}, 1}, {0, {8, 7}, 0}, {5, {8, 7}, 1}, {0, {9, 0}, 
  0}, {2, {9, 0}, 1}, {8, {9, 1}, 0}, {4, {9, 1}, 1}, {4, {9, 2}, 
  0}, {4, {9, 2}, 1}, {1, {9, 3}, 0}, {3, {9, 3}, 1}, {0, {9, 4}, 
  0}, {0, {9, 4}, 1}, {0, {9, 5}, 0}, {5, {9, 5}, 1}, {4, {9, 6}, 
  0}, {4, {9, 6}, 1}, {5, {9, 7}, 0}, {5, {9, 7}, 1}, {7, {10, 0}, 
  0}, {7, {10, 0}, 1}, {0, {10, 1}, 0}, {0, {10, 1}, 1}, {0, {10, 2}, 
  0}, {2, {10, 2}, 1}, {8, {10, 3}, 0}, {4, {10, 3}, 1}, {4, {10, 4}, 
  0}, {4, {10, 4}, 1}, {5, {10, 5}, 0}, {5, {10, 5}, 1}, {4, {10, 6}, 
  0}, {4, {10, 6}, 1}, {5, {10, 7}, 0}, {1, {11, 0}, 0}, {1, {11, 0}, 
  1}, {4, {11, 1}, 0}, {4, {11, 1}, 1}, {1, {11, 2}, 0}, {1, {11, 2}, 
  1}, {4, {11, 3}, 0}, {4, {11, 3}, 1}, {4, {11, 4}, 0}, {4, {11, 4}, 
  1}, {5, {11, 5}, 0}, {0, {11, 5}, 1}, {0, {11, 6}, 0}, {7, {12, 0}, 
  0}, {7, {12, 0}, 1}, {0, {12, 1}, 0}, {2, {12, 1}, 1}, {2, {12, 2}, 
  0}, {0, {12, 2}, 1}, {0, {12, 3}, 0}, {0, {12, 3}, 1}, {0, {12, 4}, 
  0}, {0, {12, 4}, 1}, {0, {12, 5}, 0}}
\end{scope}
\draw (240:\k) coordinate (C) -- ++(0:\k) coordinate (D) -- ++(-60:\n) coordinate (E) -- ++(-120:2*\n) coordinate (F) -- ++(180:\k) coordinate (A) -- ++(120:\n) coordinate (B) -- cycle;
\draw (A) -- ++(60:2*\n) coordinate (G) -- (C); \draw (E) -- (G);
\foreach\x in {1,...,\k} \path (C) ++(0:\x-0.5) node[vertex=green] {};
\foreach\x in {1,...,\k} \path (A) ++(0:\x-0.5) node[vertex=green] {};
\foreach\x in {1,...,\n} \path (C) ++(240:\x-0.5) node[vertex=lightblue] {};
\foreach\x in {1,...,\n} \path (F) ++(60:\x-0.5) node[vertex=lightblue] {};
\foreach\x/\c in {1/lightblue,2/lightblue,3/lightblue,4/red,5/red} \path (A) ++(120:\x-0.5) node[vertex=\c] {};
\foreach\x/\c in {1/lightblue,2/green,3/green,4/lightblue,5/green} \path (C) ++(240:\x+\n-0.5) node[vertex=\c] {};
\foreach\x/\c in {1/lightblue,2/red,3/lightblue,4/red,5/lightblue} \path (D) ++(-60:\x-0.5) node[vertex=\c] {}; %lambda
\foreach\x/\c in {1/green,2/green,3/green,4/lightblue,5/lightblue} \path (E) ++(-120:\x-0.5) node[vertex=\c] {}; %mu
\draw[dotted] (B) ++(60:\n) -- ++(-60:\n) -- ++(0:\k);
\end{tikzpicture}
\end{center}
\end{samepage}

The final case of \eqref{eq:master} we consider is
\[
\sum_\nu \loz{\nu/->,\tilde\lambda/->,\varnothing/->,\varnothing/->}{\zz}{\z} G_\nu(\x;\z) G^\mu(\x;\y)
=
\sum_{\tilde\nu} \loz{\mu/->,\tilde\lambda/->,\tilde\nu/->,\varnothing/->}{\zz}{\y} G_{\varnothing^\ast}(\x;\z) G^{\tilde\nu{}^\ast}(\x;\y).
\]
By rewriting the l.h.s.\ as
\[
\sum_\nu \loz{\nu/->,\tilde\lambda/->,\varnothing/->,\varnothing/->}{\zz}{\z} \prod_{i=1}^k x_i
\prod_{i\in\nu}z_i^{-1}
\,G^{\nu^\ast}(\x;\zz) G^\mu(\x;\y)
\]
we see that we can apply to it the identity~\eqref{eq:preequivalt}, resulting after simplifying
by $G_{\varnothing^\ast}(\x;\z)$ on both sides in
\begin{equation}\label{eq:prequiv}
G^{\tilde\lambda}(\x;\zz) G^\mu(\x;\y)
\prod_{i=1}^k x_i \prod_{i=1}^k z_i^{-1}
=
\sum_{\tilde\nu} \loz{\mu/->,\tilde\lambda/->,\tilde\nu/->,\varnothing/->}{\zz}{\y} G^{\tilde\nu{}^\ast}(\x;\y).
\end{equation}

We now set $\z=\yy$, and apply one last time Lemma~\ref{lem:split} to the r.h.s.:
\[
G^{\tilde\lambda}(\x;\zz) G^\mu(\x;\y)
\prod_{i=1}^k x_i \prod_{i\in\sigma} y_i^{-1}
=
\sum_{\tilde\nu,\sigma:\, \sigma\vartriangleleft\tilde\nu{}^{\ast}}
\triup{\mu/->,\tilde\lambda/->,\sigma/<-}{\y}
(-1)^{|\tilde\nu{}^\ast|-|\sigma|}
G^{\tilde\nu{}^\ast}(\x;\y).
\]
Thanks to the identity~\eqref{eq:id}, we can perform the summation over $\tilde\nu$, and obtain
\[
G^{\tilde\lambda}(\x;\y)G^\mu(\x;\y)=
\sum_{\sigma} 
\triup{\mu/->,\tilde\lambda/->,\sigma/<-}{\y}
G^\sigma(\x;\y)
\]
which is nothing but Theorem~\ref{thm:equiv}.

%\end{enumerate}

\subsection{Stability}\label{sec:stab}
The results of the previous section were based on the identity \eqref{eq:master}, which is only valid
for $n$ large enough. We now show that our theorems remain true for all values of $n$ for which they make
sense, \ie such that all Young diagrams have width at most $n-k$; this is related to ``stability'' of the various quantities as one varies $n$.

The first observation is that given an (abstractly defined, \ie without bounding box) Young diagram
$\lambda$ with at most $k$ rows, the polynomial
$G^\lambda(\x;\y)$ as defined in Sect.~\ref{ssec:groth} is independent
of the choice of $n$ (with the condition mentioned above that $n-k\geq w(\lambda)$). This can be readily seen from Prop.~\ref{prop:groth}, since the location of
the rightmost green dot on the top row is precisely $w(\lambda)+k\leq n$. Anything to its right is empty, which
shows the stability property. We also conclude that the dependence of
$G^\lambda$ on its secondary alphabet is only on $y_1,\ldots,y_{w(\lambda)+k-1}$.

Next, we consider the equality of Theorem~\ref{thm:equiv}, which was proven in the previous section for
large enough $n$. Since all Grothendieck polynomials satisfy the stability above, the $c^{\lambda,\mu}_{\,\,\nu}(\y)$
are independent of $n$. Now consider an equivariant puzzle, \href{http://www.lpthe.jussieu.fr/~pzinn/puzzles/?height=5&width=5&y1=1&y2=2,1&y3=2,1,1&y3comp&K&equiv&mask=35&process}{\eg}, for $n=10$, $k=5$,
\[
\def\size{10}
\begin{tikzpicture}[scale=0.5]
\equivpuzzle%
{{4, {0, 0}, 0}, {4, {0, 0}, 1}, {4, {0, 1}, 0}, {4, {0, 1}, 
  1}, {4, {0, 2}, 0}, {4, {0, 2}, 1}, {1, {0, 3}, 0}, {1, {0, 3}, 
  1}, {4, {0, 4}, 0}, {4, {0, 4}, 1}, {1, {0, 5}, 0}, {1, {0, 5}, 
  1}, {4, {0, 6}, 0}, {4, {0, 6}, 1}, {1, {0, 7}, 0}, {1, {0, 7}, 
  1}, {1, {0, 8}, 0}, {1, {0, 8}, 1}, {1, {0, 9}, 0}, {4, {1, 0}, 
  0}, {4, {1, 0}, 1}, {4, {1, 1}, 0}, {4, {1, 1}, 1}, {4, {1, 2}, 
  0}, {4, {1, 2}, 1}, {1, {1, 3}, 0}, {1, {1, 3}, 1}, {4, {1, 4}, 
  0}, {4, {1, 4}, 1}, {1, {1, 5}, 0}, {1, {1, 5}, 1}, {4, {1, 6}, 
  0}, {4, {1, 6}, 1}, {1, {1, 7}, 0}, {1, {1, 7}, 1}, {1, {1, 8}, 
  0}, {4, {2, 0}, 0}, {4, {2, 0}, 1}, {4, {2, 1}, 0}, {4, {2, 1}, 
  1}, {4, {2, 2}, 0}, {4, {2, 2}, 1}, {1, {2, 3}, 0}, {1, {2, 3}, 
  1}, {4, {2, 4}, 0}, {4, {2, 4}, 1}, {1, {2, 5}, 0}, {1, {2, 5}, 
  1}, {4, {2, 6}, 0}, {4, {2, 6}, 1}, {1, {2, 7}, 0}, {4, {3, 0}, 
  0}, {4, {3, 0}, 1}, {4, {3, 1}, 0}, {4, {3, 1}, 1}, {4, {3, 2}, 
  0}, {4, {3, 2}, 1}, {1, {3, 3}, 0}, {3, {3, 3}, 1}, {0, {3, 4}, 0},
  {0, {3, 4}, 1}, {7, {3, 5}, 0}, {7, {3, 5}, 1}, {0, {3, 6}, 
  0}, {0, {4, 0}, 0}, {0, {4, 0}, 1}, {0, {4, 1}, 0}, {0, {4, 1}, 
  1}, {0, {4, 2}, 0}, {2, {4, 2}, 1}, {8, {4, 3}, 0}, {4, {4, 3}, 
  1}, {4, {4, 4}, 0}, {4, {4, 4}, 1}, {1, {4, 5}, 0}, {4, {5, 0}, 
  0}, {4, {5, 0}, 1}, {4, {5, 1}, 0}, {4, {5, 1}, 1}, {1, {5, 2}, 
  0}, {3, {5, 2}, 1}, {0, {5, 3}, 0}, {0, {5, 3}, 1}, {0, {5, 4}, 
  0}, {0, {6, 0}, 0}, {0, {6, 0}, 1}, {0, {6, 1}, 0}, {0, {6, 1}, 
  1}, {3, {6, 2}, 0}, {3, {6, 2}, 1}, {0, {6, 3}, 0}, {0, {7, 0}, 
  0}, {0, {7, 0}, 1}, {0, {7, 1}, 0}, {0, {7, 1}, 1}, {3, {7, 2}, 
  0}, {0, {8, 0}, 0}, {0, {8, 0}, 1}, {0, {8, 1}, 0}, {0, {9, 0}, 0}}
\begin{scope}[puz]
\draw[<-] (6.5,-0.75) -- node[above right] {$\tableau{&\\\\}$} (4.5,-0.75);
\draw[<-] (6.25,4.5) -- node[below] {$\tableau{&\\\\\\}$} (4.5,6.25);
\draw[<-] (-0.75,4.5) -- node[above left] {$\tableau{\\}$} (-0.75,6.5);
\end{scope}
\end{tikzpicture}
\]
It is clear that the width of the bottom Young diagram $\nu$ determines the frozen region, namely,
anything right of the line coming out of the rightmost bottom green dot is frozen.
Therefore, we can truncate the puzzle around its left corner to a sub-puzzle of size
$w(\nu)+k$ (in the example, $7$), with same Young diagrams $\lambda,\mu,\nu$; 
the transformation preserves weights.

We conclude that puzzles and their weights are also stable, and therefore, the sum over puzzles is equal
to $c^{\lambda,\mu}_{\,\,\nu}(\y)$.

The other theorems can be proven similarly, and we shall not repeat the reasoning. For Theorems~\ref{thm:equivdual} and \ref{thm:MS}, one should note that dual Grothendieck polynomials are stable only after substitution
$\y\mapsto\yy$ and $\lambda\mapsto\lambda^\ast$, due to their very definition (this can also be obtained from
Prop.~\ref{prop:groth}). With such substitutions, the proper stability conditions
can be proven. Theorems~\ref{thm:equivalt} and \ref{thm:equivdualalt} follow (without any restriction on $n$) from identity~\eqref{eq:preequivalt} and Theorem~\ref{thm:MSalt}, respectively; the stability of the latter is again proven straightforwardly.

\section{Discussion}
\label{sec:discuss}
%\rem{write everything as certain correlation functions etc. borrow from last section of empiric}

\subsection{Higher expectation values}
The main results of this paper concern the calculation of certain ``three-point functions'' in the ring $K_T(Gr(k,n))$, \ie $\left<G^\lambda G^\mu G_\nu\right>$
and $\left<G_\lambda G_\mu G^\nu\right>$.
Of course, higher expectation values follow immediately by repeated insertion of the decomposition of the identity
$1=\sum_\lambda \ket{G^\lambda}\bra{G_\lambda}$, recalling that the $\{G^\lambda\}$ and $\{G_\lambda\}$ form dual bases.

\subsubsection{The $R$-matrix}
In order to do so, it is convenient to switch to the dual graphical notation that is more usual in integrable models.

%\rem{Now lozenges don't split any more; bipartiteness; back to usual integrable setting (oriented lines crossing, here carrying a tensor product of $n$ representations with given [oriented] set of spectral parameters)}
\[
\loz{\nu/->,\rho/->,\mu/<-,\lambda/<-}{\z}{\y}
=
\begin{tikzpicture}[baseline=0,scale=0.7]
\draw[arrow=0.25,arrow=0.8] (-1,1) -- node[pos=0.25,above] {$\ss\nu$} node[pos=0.75,above] {$\ss\mu$} (1,-1) node[below] {$\ss\y$};
\draw[arrow=0.25,arrow=0.8] (1,1) -- node[pos=0.25,above] {$\ss\rho$} node[pos=0.75,above] {$\ss\lambda$} (-1,-1) node[below] {$\ss\z$};
\end{tikzpicture}
\]
The r.h.s.\ lines require an orientation, because weights are not invariant by 180 degree rotation.
This orientation of the partitions in the l.h.s.\ is conventionally recovered from that of the r.h.s.\ by a 90 degree counterclockwise rotation. Each line should be thought of
as the space $(\mathbb C^3)^{\otimes n}$, with spectral/equivariant parameters $\z$ or $\y$ attached to each copy
of $\mathbb C^3$.

%\rem{interpretation as expectation value of the fully general lozenge???}

\subsubsection{Equivariant setting}
\tikzset{mydot/.style={circle,draw=black,fill=#1,inner sep=2pt}}
%\rem{The equivariant calculation displays more structure, since the dual
%bases $\{G^\lambda\}$ and $\{G_\lambda\}$ are essentially different.}
In these notations, the content of Lemma~\ref{lem:split} amounts essentially to the picture
\[
\begin{tikzpicture}[baseline=0,scale=0.7]
\draw[arrow=0.25,arrow=0.8] (-1,1) -- node[pos=0.25,above] {$\ss\nu$} node[pos=0.75,above] {$\ss\mu$} (1,-1) node[below] {$\ss\z$};
\draw[arrow=0.25,arrow=0.8] (1,1) -- node[pos=0.25,above] {$\ss\rho$} node[pos=0.75,above] {$\ss\lambda$} (-1,-1) node[below] {$\ss\z$};
\end{tikzpicture}
=
\begin{tikzpicture}[baseline=0.5cm]
\draw[arrow] (0,0) -- node[above] {$\ss\lambda$} (210:1);% node[below] {$\ss\z$};
\draw[arrow] (0,0) -- node[above] {$\ss\mu$} (330:1);% node[below] {$\ss\z$};
\draw[invarrow] (0,0) -- node[right] {$\ss\sigma$} (90:1);
\draw[invarrow] (90:1) -- node[above] {$\ss\rho$} ++(30:1);
\draw[invarrow] (90:1) -- node[above] {$\ss\nu$} ++(150:1);
\node[mydot=white] (a) at (0,0) {};
\node[mydot=black] (a) at (0,1) {};
\end{tikzpicture}
\]
with the obvious notations
\[
\triup{\lambda/->,\mu/->,\nu/<-}{}
=
\begin{tikzpicture}[baseline=0]
\draw[invarrow] (0,0) -- node[above] {$\ss\mu$} (30:1);
\draw[invarrow] (0,0) -- node[above] {$\ss\lambda$} (150:1);
\draw[arrow] (0,0) -- node[right] {$\ss\nu$} (270:1);
\node[mydot=black] (a) at (0,0) {};
\end{tikzpicture}
\qquad
\tridown{\nu/->,\mu/<-,\lambda/<-}{}
=
\begin{tikzpicture}[baseline=0]
\draw[arrow] (0,0) -- node[above] {$\ss\lambda$} (210:1);
\draw[arrow] (0,0) -- node[above] {$\ss\mu$} (330:1);
\draw[invarrow] (0,0) -- node[right] {$\ss\nu$} (90:1);
\node[mydot=white] (a) at (0,0) {};
\end{tikzpicture}
\qquad
\]
and where we have removed spectral parameters since they are everywhere given by $\z$.
The circles at vertices are in principle redundant at this stage.
Note that this factorization of the $R$-matrix occurs in the general $U_q(\widehat{\mathfrak{sl}(3)})$ 
$R$-matrix in~\cite{Resh-On}; our $R$-matrix is a degenerate version of it,
and at ratio of spectral parameters equal to $1$, our model is a degenerate version of the loop model on the triangular
lattice considered in~\cite{Resh-On}.

Gluing together these vertices (and respecting the orientation of each edge, since the two dual
bases $\{ G^\lambda \}$ and $\{ G_\lambda \}$ are distinct),
we produce {\em planar trees}\/ where each vertex is colored white or black.
An important point is that bipartiteness, which was automatic when spectral parameters are generic
(which in particular explains why we do not have a Molev--Sagan type rule for [non-dual] double Grothendieck
polynomials), is no longer required.

For example, computing the coefficient of $G^\rho$ in the triple product $G^\lambda G^\mu G^\nu$ amounts to one of the two diagrams
\[
\begin{tikzpicture}[baseline=0.5cm]
\draw[arrow] (0,0) -- node[above] {$\ss\rho$} (210:1);% node[below] {$\ss\z$};
\draw[invarrow] (0,0) -- node[above] {$\ss\nu$} (330:1);% node[below] {$\ss\z$};
\draw[invarrow] (0,0) -- (90:1);
\draw[invarrow] (90:1) -- node[above] {$\ss\lambda$} ++(30:1);
\draw[invarrow] (90:1) -- node[above] {$\ss\mu$} ++(150:1);
\node[mydot=black] (a) at (0,0) {};
\node[mydot=black] (a) at (0,1) {};
\end{tikzpicture}
=
\begin{tikzpicture}[rotate=-90,baseline=0cm]
\draw[invarrow] (0,0) -- node[left] {$\ss\mu$} (210:1);% node[below] {$\ss\z$};
\draw[arrow] (0,0) -- node[left] {$\ss\rho$} (330:1);% node[below] {$\ss\z$};
\draw[invarrow] (0,0) -- (90:1);
\draw[invarrow] (90:1) -- node[right] {$\ss\nu$} ++(30:1);
\draw[invarrow] (90:1) -- node[right] {$\ss\lambda$} ++(150:1);
\node[mydot=black] (a) at (0,0) {};
\node[mydot=black] (a) at (0,1) {};
\end{tikzpicture}
\]
If one insists on returning to the triangular lattice, then one should be careful that some puzzles are rotated $\pm 120$ degrees, requiring to use the tiles of table \eqref{eq:table}; \eg, for the first of the two diagrams above with $\lambda=\mu=\tableau{\\}$, $\nu=\rho=\tableau{&\\}$, we find
\begin{center}%
\def\size{4}\def\puzzlescale{0.5}%
\doublepuzzle[\node at (-\size*0.5,\size*0.75) {$\ss (1-\frac{y_3}{y_2})^2$};]{{4, {0, 0}, 0}, {4, {0, 0}, 1}, {1, {0, 1}, 0}, {1, {0, 1}, 
   1}, {4, {0, 2}, 0}, {4, {0, 2}, 1}, {1, {0, 3}, 0}, {3, {0, 3}, 
   1}, {0, {1, 0}, 0}, {0, {1, 0}, 1}, {7, {1, 1}, 0}, {7, {1, 1}, 
   1}, {0, {1, 2}, 0}, {0, {1, 2}, 1}, {3, {1, 3}, 0}, {1, {1, 3}, 
   1}, {4, {2, 0}, 0}, {4, {2, 0}, 1}, {1, {2, 1}, 0}, {3, {2, 1}, 
   1}, {0, {2, 2}, 0}, {5, {2, 2}, 1}, {1, {2, 3}, 0}, {1, {2, 3}, 
   1}, {0, {3, 0}, 0}, {0, {3, 0}, 1}, {3, {3, 1}, 0}, {1, {3, 1}, 
   1}, {5, {3, 2}, 0}, {2, {3, 2}, 1}, {2, {3, 3}, 0}, {0, {3, 3}, 
   1}}\qquad
\doublepuzzle[\node at (-\size*0.7,\size*0.95) {$\ss \frac{y_3}{y_2}(1-\frac{y_3}{y_2})(1-\frac{y_4}{y_3})$};]{{4, {0, 0}, 0}, {4, {0, 0}, 1}, {1, {0, 1}, 0}, {1, {0, 1}, 
   1}, {4, {0, 2}, 0}, {4, {0, 2}, 1}, {1, {0, 3}, 0}, {3, {0, 3}, 
   1}, {0, {1, 0}, 0}, {0, {1, 0}, 1}, {7, {1, 1}, 0}, {7, {1, 1}, 
   1}, {0, {1, 2}, 0}, {0, {1, 2}, 1}, {3, {1, 3}, 0}, {1, {1, 3}, 
   1}, {4, {2, 0}, 0}, {4, {2, 0}, 1}, {1, {2, 1}, 0}, {3, {2, 1}, 
   1}, {0, {2, 2}, 0}, {2, {2, 2}, 1}, {2, {2, 3}, 0}, {5, {2, 3}, 
   1}, {0, {3, 0}, 0}, {0, {3, 0}, 1}, {3, {3, 1}, 0}, {1, {3, 1}, 
   1}, {1, {3, 2}, 0}, {1, {3, 2}, 1}, {5, {3, 3}, 0}, {0, {3, 3}, 
   1}}\qquad
\doublepuzzle[\node at (-\size*0.7,\size*0.95) {$\ss \frac{y_3}{y_2}(1-\frac{y_4}{y_2})(1-\frac{y_4}{y_3})$};]{{4, {0, 0}, 0}, {4, {0, 0}, 1}, {1, {0, 1}, 0}, {3, {0, 1}, 
   1}, {0, {0, 2}, 0}, {2, {0, 2}, 1}, {2, {0, 3}, 0}, {0, {0, 3}, 
   1}, {0, {1, 0}, 0}, {0, {1, 0}, 1}, {3, {1, 1}, 0}, {1, {1, 1}, 
   1}, {1, {1, 2}, 0}, {3, {1, 2}, 1}, {0, {1, 3}, 0}, {5, {1, 3}, 
   1}, {4, {2, 0}, 0}, {4, {2, 0}, 1}, {1, {2, 1}, 0}, {3, {2, 1}, 
   1}, {3, {2, 2}, 0}, {1, {2, 2}, 1}, {5, {2, 3}, 0}, {5, {2, 3}, 
   1}, {0, {3, 0}, 0}, {0, {3, 0}, 1}, {3, {3, 1}, 0}, {1, {3, 1}, 
   1}, {1, {3, 2}, 0}, {1, {3, 2}, 1}, {5, {3, 3}, 0}, {0, {3, 3}, 
   1}}
\end{center}
with a total coefficient $(1-y_4/y_2)^2$ of $G^{\stableau{&\\}}$ in the expansion of
$G^{\stableau{\\}} G^{\stableau{\\}} G^{\stableau{&\\}}$.

Actually, one feature that the integrable model does not display is the commutativity
of $K_T(Gr(k,n))$, \ie
$
c^{\lambda,\mu}_{\,\,\nu}
=
c^{\mu,\lambda}_{\,\,\nu}
$
and
$c_{\lambda,\mu}^{\,\,\nu}
=
c_{\mu,\lambda}^{\,\,\nu}$. This means that the planarity of the trees is actually
irrelevant; it would be interesting to see this at the integrable level, as
well as to find an interpretation for Verlinde-type formulae.

% \rem{I'm still not happy with the orientation of lines. it's not consistent with the standard $sl(3)$ invariant theory, 
% where all lines have to be simultaneously incoming/outgoing. there are basically two unrelated kinds of arrows -- ways to read the partitions and representations of $sl(3)$ (which in our language amounts to looking at the set of allowed weights = values of conserved charges) -- and I'm confusing them}

\subsubsection{Non-equivariant setting}
Finally, we consider the non-equivariant case, \ie the content of Theorems~\ref{thm:nonequiv} and \ref{thm:nonequivdual}. Now the vertices are completely 120 degree rotationally invariant, and one may choose orientations of edges arbitrarily, since their only use is to keep track of the way to read partitions
(\ie the orientation of an edge marked $\lambda$ can be inverted at the cost of replacing $\lambda$ with $\lambda^\ast$). In particular, the orientation
of internal edges is irrelevant and may be omitted.

Each tree made of such vertices then computes the expectation value
\[
\left<
\prod_{a=1}^r G^{\lambda_a}
\left(\prod_{i=1}^k x_i\right)^{1+\#\{\text{white vertices}\}}
\right>
\]
where the $\lambda_a$ are the partitions associated to external edges
of the tree, and we have assumed for simplicity that all external arrows are incoming.

Many diagrams may therefore lead to the same computation.
For example, $\left<(G^{\stableau{\\}})^6 x_1x_2\right>$ in $K(Gr(2,5))$ (answering
the Schubert calculus question ``how many lines intersect 6 given planes in 4-space?''; the power of $x_1x_2$ is actually irrelevant, since this
is really a question in cohomology) is given by
\[
\left<(G^{\stableau{\\}})^6 x_1x_2\right>
=
\begin{tikzpicture}[baseline=0]
\node[mydot=black] (a) at (0,0) {};
\node[mydot=black] (b) at (0,1) {};
\node[mydot=black] (c) at (-30:1) {};
\node[mydot=black] (d) at (-150:1) {};
\draw (a) -- (b) (a) -- (c) (a) -- (d);
\draw[invarrow] (b) -- node[above] {\fund} ++(30:1);
\draw[invarrow] (b) -- node[above] {\fund} ++(150:1);
\draw[invarrow] (c) -- node[above] {\fund} ++(30:1);
\draw[invarrow] (c) -- node[right] {\fund} ++(-90:1);
\draw[invarrow] (d) -- node[above] {\fund} ++(150:1);
\draw[invarrow] (d) -- node[left] {\fund} ++(-90:1);
\end{tikzpicture}
=
\begin{tikzpicture}[baseline=0]
\draw (0,0) node[mydot=black] (a) {}
-- ++(30:1) node[mydot=black] (b) {}
-- ++(-30:1) node[mydot=black] (c) {}
-- ++(30:1) node[mydot=black] (d) {};
\draw[invarrow] (a) -- node[above] {\fund} ++(150:1);
\draw[invarrow] (a) -- node[left] {\fund} ++(-90:1);
\draw[invarrow] (b) -- node[left] {\fund} ++(90:1);
\draw[invarrow] (c) -- node[right] {\fund} ++(-90:1);
\draw[invarrow] (d) -- node[right] {\fund} ++(90:1);
\draw[invarrow] (d) -- node[above] {\fund} ++(-30:1);
\end{tikzpicture}
=
\begin{tikzpicture}[baseline=0]
\draw (0,0) node[mydot=black] (a) {}
-- ++(60:1) node[mydot=black] (b) {}
-- ++(0:1) node[mydot=black] (c) {}
-- ++(-60:1) node[mydot=black] (d) {};
\draw[invarrow] (a) -- node[above] {\fund} ++(180:1);
\draw[invarrow] (a) -- node[left] {\fund} ++(-60:1);
\draw[invarrow] (b) -- node[left] {\fund} ++(120:1);
\draw[invarrow] (c) -- node[right] {\fund} ++(60:1);
\draw[invarrow] (d) -- node[above] {\fund} ++(0:1);
\draw[invarrow] (d) -- node[right] {\fund} ++(-120:1);
\end{tikzpicture}
=5
\]
(note that the first diagram is nothing but $\left<\big(G^{\stableau{&&&\\&&\\\\}}\big)^3 \prod_{i=1}^4 x_i\right>$ in $K(Gr(4,10))$).

%\rem{example here?}

\subsection{Mosaics}
As in \cite{Pur,z-j} for puzzles without $K$-tiles, one can reformulate our puzzles
as tilings of certain regions of the plane with a set of elementary
tiles. The $K$-tile then corresponds to a ``shield'' tile, shown last below:
\begin{center}\definecolor{c9}{rgb}{0.5,0.5,0.5}
\begin{tikzpicture}
\matrix[column sep=0.7cm] {
\draw[fill=c0,yshift=0.7cm,rotate=15,scale=0.577] (90:1) -- node[left] {} (210:1) -- node[below] {} (330:1) -- node[right] {} cycle;
\draw[fill=c0,yshift=-0.7cm,rotate=15,scale=0.577] (270:1) -- node[right] {} (30:1) -- node[above] {} (150:1) -- node[left] {} cycle;
&
\draw[fill=c1,yshift=0.7cm,rotate=-15,scale=0.577] (90:1) -- node[left] {} (210:1) -- node[below] {} (330:1) -- node[right] {} cycle;
\draw[fill=c1,yshift=-0.7cm,rotate=-15,scale=0.577] (270:1) -- node[right] {} (30:1) -- node[above] {} (150:1) -- node[left] {} cycle;
&
\draw[fill=c2,rotate=15] (-0.5,-0.5) -- node[left] {} ++(0,1) -- node[above] {} ++(1,0) -- node[right] {} ++(0,-1) -- node[below] {} cycle;
&
\draw[fill=c3,rotate=-15] (-0.5,-0.5) -- node[left] {} ++(0,1) -- node[above] {} ++(1,0) -- node[right] {} ++(0,-1) -- node[below] {} cycle;
&
\draw[fill=c4,rotate=45] (-0.5,-0.5) -- node[below left=-1mm] {} ++(0,1) -- node[above left=-1mm] {} ++(1,0) -- node[above right=-1mm] {} ++(0,-1) -- node[below right=-1mm] {} cycle;
&
\draw[fill=c7]  (0,0) -- node[below] {} ++(75:1) -- node[above] {} ++(-75:1) -- node[above] {} ++(-105:1) -- node[below] {} cycle;
&
\draw[fill=c9] (0,1) -- node[below] {} ++(-15:1) -- node[right] {} ++(-105:1) -- node[right] {} ++(-135:1) -- node[left] {} ++(-225:1) -- node[left] {} ++(-255:1) -- node[below] {} ++(-345:1);
\\
};
\end{tikzpicture}
\end{center}
We refer to the website~\cite{puzzle} for a demonstration of this representation.

\section*{Acknowledgments}

MW is supported by the ARC grant DE160100958 and the ARC Centre of Excellence for Mathematical and Statistical Frontiers (ACEMS). PZJ is supported by ERC grant ``LIC'' 278124 and ARC grant DP140102201.

% to fix MR issues
\gdef\MRshorten#1 #2MRend{#1}%
\gdef\MRfirsttwo#1#2{\if#1M%
MR\else MR#1#2\fi}
\def\MRfix#1{\MRshorten\MRfirsttwo#1 MRend}
\renewcommand\MR[1]{\relax\ifhmode\unskip\spacefactor3000 \space\fi
\MRhref{\MRfix{#1}}{{\scriptsize \MRfix{#1}}}}
\renewcommand{\MRhref}[2]{%
\href{http://www.ams.org/mathscinet-getitem?mr=#1}{#2}}
\bibliographystyle{amsplainhyper}
\bibliography{refs}

\end{document}